
\documentclass[12pt,leqno]{amsart}

\overfullrule=0pt

\usepackage{amsmath,amscd,amsthm,amsxtra,amssymb}
\usepackage{epsfig,graphics,color,colortbl}
\usepackage{amssymb,latexsym}
\usepackage{mathrsfs}
\usepackage[poly,all]{xy}
\usepackage{marginnote}
\usepackage{xspace}
\usepackage[colorlinks=true, pdfstartview=FitV, linkcolor=blue,citecolor=blue,urlcolor=blue]{hyperref}

\usepackage{yfonts}
\usepackage{enumerate}

\usepackage[usenames,dvipsnames,svgnames,table]{xcolor}

\usepackage[normalem]{ulem}  

\usepackage[colorinlistoftodos]{todonotes}

\allowdisplaybreaks[3]

\setlength{\textwidth}{16.5cm} 
\setlength{\textheight}{19cm}
\setlength{\oddsidemargin}{0.0cm}
\setlength{\evensidemargin}{0.0cm}

\newcounter{myc}

\newdir{ >}{{}*!/-10pt/@{>}}

\theoremstyle{plain}
\newtheorem{thm}{\bf Theorem}[section]
\newtheorem{df}[thm]{\bf Definition}
\newtheorem{prop}[thm]{\bf Proposition}
\newtheorem{corollary}[thm]{\bf Corollary}
\newtheorem{lem}[thm]{\bf Lemma}

\theoremstyle{definition}
\newtheorem{ex}[thm]{\bf Example}
\newtheorem{rem}[thm]{\bf Remark}

\newcommand{\nc}{\newcommand}
\nc{\Prop}{\begin{prop}}
\nc{\enprop}{\end{prop}}
\nc{\Lemma}{\begin{lem}}
\nc{\enlemma}{\end{lem}}
\nc{\Exam}{\begin{ex}}
\nc{\enexam}{\end{ex}}
\nc{\Th}{\begin{thm}}
\nc{\enth}{\end{thm}}
\nc{\Cor}{\begin{corollary}}
\nc{\encor}{\end{corollary}}
\nc{\Def}{\begin{df}}
\nc{\edf}{\end{df}}
\nc{\Rem}{\begin{rem}}
\nc{\enrem}{\end{rem}}

\renewcommand{\le}{\leqslant}
\renewcommand{\ge}{\geqslant}

\newenvironment{red}
{\relax\color{red}}
{\hspace*{.5ex}\relax}

\newcommand{\ber}{\begin{red}}
\newcommand{\er}{\end{red}}

\newenvironment{blue}
{\relax\color{Dandelion}}
{\hspace*{.5ex}\relax}

\newcommand{\beb}{\begin{blue}}
\newcommand{\eb}{\end{blue}}

\nc{\berm}{\ber {}\marginnote{\fbox{\scshape\lowercase{M}}}}
\nc{\bers}{\ber {}\marginnote{\fbox{\scshape\lowercase{S}}}}
\nc{\bermh}{\ber {}\marginnote{\fbox{\scshape\lowercase{MH}}}}
\nc{\berE}{\ber {}\marginnote{\fbox{\scshape\lowercase{E}}}}




\newcommand{\C}{{\mathbb C}}
\newcommand{\Q}{\mathbb {Q}}
\newcommand{\Z}{{\mathbb Z}}
\newcommand{\R}{\ms{1mu}{\mathbb R}}

\newcommand{\one}{{\bf{1}}}
\newcommand{\seteq}{\mathbin{:=}}

\newcommand{\hd}{{\mathrm{hd}}}      					 
\newcommand{\To}[1][{\hs{0.8ex}}]{\xrightarrow{\ms{7mu}{#1}\ms{7mu}}}

\newcommand{\Sp}{\mathrm{span}_{\R_{\ge0}}}  	


\newcommand{\g}{\mathfrak{g}}

\newcommand{\Uq}[1][{\mathfrak{g}}]{{U_q(#1)}}
\newcommand{\Uqm}[1][{\mathfrak{g}}]{{U_q^-(#1)}}
\newcommand{\Uqp}[1][{\mathfrak{g}}]{{U_q^+(#1)}}

\newcommand{\Hom}{\operatorname{Hom}}
\newcommand{\HOM}{\mathrm{H{\scriptstyle OM}}}
\newcommand{\END}{\mathrm{E\scriptstyle ND}}
\newcommand{\End}{\operatorname{End}}

\newcommand{\isoto}[1][]{\mathop{\xrightarrow%
[{\raisebox{.3ex}[0ex][.3ex]{$\scriptstyle{#1}$}}]%
{{\raisebox{-.6ex}[0ex][-.6ex]{$\mspace{2mu}\sim\mspace{2mu}$}}}}}

\newcommand{\Mod}{\text{-}\mathrm{Mod}}
\newcommand{\gmod}{\text{-}\mathrm{gmod}}

\newcommand{\proj}{\text{-}\mathrm{proj}}

\def\T{{\mathcal T}}

\newcommand{\conv}{{\mathbin{\scalebox{1.1}{$\mspace{1.5mu}\circ\mspace{1.5mu}$}}}}
\newcommand{\hconv}{\mathbin{\scalebox{.9}{$\nabla$}}}
\newcommand{\sconv}{\mathbin{\scalebox{.9}{$\Delta$}}}

\renewcommand{\Im}{\on{Im}}
\newcommand{\de}{\on{\textfrak{d}}}

\newcommand{\cmA}{\mathsf{A}}  
\newcommand{\wlP}{\mathsf{P}}   
\newcommand{\rlQ}{\mathsf{Q}}   
\newcommand{\weyl}{\mathsf{W}}  
\newcommand{\prD}{\Delta_+}            
\newcommand{\nrD}{\Delta_-}            
\newcommand{\sg}{\mathfrak{S}}   
\newcommand{\Po}{\wlP}
\newcommand{\rtlp}{\rtl_+}
\newcommand{\qQ}{\mathcal{Q}}
\newcommand{\bQ}{\overline{\qQ}}

\newcommand\Aq[1][{\n(w)}]{A_q(#1)}  
\newcommand{\Aqn}{\Aq[\mathfrak n]}  


\newcommand{\wt}{\mathrm{wt}} 		
\nc{\cor}{\mathbf{k}\ms{1mu}}
\newcommand{\bR}{\cor}
\nc{\corp}{\cor}
\newcommand{\catC}{ \mathscr{C}}  	
\newcommand{\tcatC}{ \widetilde{\mathscr{C}}}  	
\newcommand{\catT}{ \mathcal{T}}  	
\newcommand{\lT}{ \widetilde{\mathcal{T}}}  	

\newcommand{\catTc}{ \mathcal{T}_{\mathrm{br}}}  	
\newcommand{\lRg}[1][w]{ \tcatC_{#1} }  	
\newcommand{\dM}{ \mathsf{M }}              
\newcommand{\dC}{ \mathsf{C }}              
\newcommand{\gW}{\mathsf{W}}
\newcommand{\sgW}{\mathsf{W}^*}
\newcommand{\tf}{{\widetilde{f}}}  		
\newcommand{\te}{{\widetilde{e}}}  		
\newcommand{\tF}{\widetilde{F}}  		
\newcommand{\tE}{\widetilde{E}}  		
\newcommand{\ep}{\varepsilon}  		
\newcommand{\ph}{\varphi}  		
\newcommand{\trivialM}{\mathbf{1}} 	

 		





\newcommand{\Ht}{\mathrm{ht}} 		

\newcommand{\nR}{\mathrm{R}^{\mathrm{norm}}} 		
\newcommand{\RR}{\mathrm{R}} 				
\newcommand{\coR}{R\ms{1mu}} 				
\newcommand{\La}{\Lambda} 			
\newcommand{\tLa}{\widetilde{\Lambda}} 			
\newcommand{\Dd}{\text{ \textfrak{d}}} 			
\newcommand{\Res}{\mathrm{Res}\ms{1mu}} 			

\newcommand{\Ma}{{\ms{1.5mu}\widehat{\mathsf{M}}}}

\newcommand{\Na}{{\widehat{\mathsf{N}}}}
\newcommand{\Mm}{{\ms{1mu}\mathsf{M}}}
\newcommand{\z}{{z_\Ma}}

\newcommand{\gr}{\mathrm{gr}}
\newcommand{\triv}{{\mathbf{1}}}   				
\newcommand{\id}{\ms{1mu}{\mathsf{id}}\ms{1mu}}   				
\newcommand{\Ds}{{\mathcal{D}}}   				
\newcommand{\Hm}{{\mathrm{H}}}   				
\newcommand{\gHm}{\mathrm{H}^{\gr}}   				
\newcommand{\dphi}{{\phi}}   				
\newcommand{\gH}{\mathrm{H}}   				
\newcommand{\gL}{\mathrm{L}}   				
\newcommand{\gzeta}{\zeta^\gr}   				
\newcommand{\gT}{T^\gr}   				
\newcommand{\gtT}{\widetilde{T}^\gr}   				







\newcommand{\lG}{\Gamma}   					

\nc{\be}{\begin{enumerate}}
\newcommand{\bnum}{\be[{\rm(i)}]}
\newcommand{\bna}{\be[{\rm(a)}]}

\newcommand{\rtl}{\rlQ}

\newcommand{\etens}{\boxtimes}

\newcommand{\rmat}[1]{\ms{1.5mu}{\mathbf{r}}_%
{\mspace{-2mu}\raisebox{-.6ex}{${\scriptstyle{#1}}$}}}

\newcommand{\Rr}{\rmat}
\newcommand{\shc}{\mathcal{C}}
\newcommand{\sht}{\mathcal{T}}

\newcommand{\tC}{\widetilde{C}}

\newcommand{\Ob}{\on{Ob}}

\nc{\ms}{\mspace}
\nc{\cl}{\colon}
\nc{\ro}{{\rm (}}
\nc{\rf}{{\rm )}\xspace}
\nc{\noi}{\noindent}
\nc{\bl}{\bigl(}
\nc{\br}{\bigr)}

\newenvironment{myequation}
{\relax\setlength{\arraycolsep}{1pt}\begin{eqnarray}}
{\end{eqnarray}}
\newenvironment{myequationn}
{\relax\setlength{\arraycolsep}{1pt}\begin{eqnarray*}}
{\end{eqnarray*}}

\nc{\eq}{\begin{myequation}}
\nc{\eneq}{\end{myequation}}
\nc{\eqn}{\begin{myequationn}}
\nc{\eneqn}{\end{myequationn}}

\newenvironment{myarray}[1]{\relax\setlength{\arraycolsep}{1pt}
\begin{array}{#1}}{\end{array}\relax}

\newcommand{\ba}{\begin{myarray}}
\newcommand{\ea}{\end{myarray}}

\nc{\hs}{\hspace*}
\nc{\set}[2]{\left\{{#1}\mid{#2}\right\}}
\nc{\snoi}{\smallskip\noi}
\nc{\al}{\alpha}
\nc{\rmz}{\setminus\{0\}}
\nc{\tens}{\otimes}
\nc{\vphi}{\varphi}
\nc{\ee}{\end{enumerate}}
\nc{\la}{\lambda}
\nc{\bc}{\begin{cases}}
\nc{\ec}{\end{cases}}
\nc{\qtq}[1][and]{\quad\text{#1}\quad}
\nc{\qt}[1]{\quad\text{#1}}
\nc{\dual}{{\displaystyle{\ms{1mu}\star}}}
\nc{\wle}{\preceq}
\nc{\epito}{\twoheadrightarrow}
\nc{\epiTo}[1][]{\xymatrix@C=4ex{{}\ar@{->>}[r]^-{#1}&{}}}
\nc{\Proof}{\begin{proof}}
\nc{\lan}{\langle}
\nc{\ran}{\rangle}
\nc{\ang}[1]{\lan{#1}\ran}
\nc{\QED}{\end{proof}}
\nc{\soplus}{\mathop{\scalebox{.65}{\raisebox{.2ex}{$\displaystyle\bigoplus$}}}}
\nc{\eps}{\varepsilon}
\nc{\on}{\operatorname}
\nc{\supp}{\on{supp}}
\nc{\sct}{strongly commute\xspace}
\nc{\scts}{strongly commutes\xspace}
\nc{\bce}{\eta}			
\nc{\height}[1]{\vert{#1}\vert}
\nc{\braid}{{\ms{1mu}\mathrm{br}}}
\nc{\gp}{\mathfrak{p}}
\nc{\wtl}{\wlP}
\nc{\ra}{real and admits an affinization}
\nc{\ras}{real and admit affinizations}
\nc{\shf}{\mathcal{F}}
\nc{\Cw}{\catC_w}
\nc{\tCw}{\lRg}
\nc{\akew}[1][2ex]{\rule[-1ex]{#1}{0ex}}
\nc{\ake}[1][2ex]{\rule[-1ex]{0ex}{#1}}
\nc{\akete}[1][2ex]{\rule[#1]{0ex}{#1}}
\nc{\tRm}{(R\gmod)\widetilde{\mbox{$\ake[2.5ex]\akew[.9ex]$}}}
\nc{\monoTo}[1][]{\xymatrix{\ar@{>->}[r]^-{{#1}}&}}
\nc{\monoto}[1][]{\rightarrowtail}
\nc{\tX}{\widetilde{X}}
\nc{\corps}{\corp}
\nc{\tL}{\widetilde{L}}
\nc{\prtl}{\rtl_+}
\nc{\tK}{\widetilde{K}}
\nc{\tep}{\widetilde\ep}
\nc{\teps}{\widetilde\ep}
\nc{\teta}{\widetilde\eta}
\nc{\ga}{\mathfrak{a}}
\nc{\scbul}{{\,\raise1pt\hbox{$\scriptscriptstyle\bullet$}\,}}
\nc{\bwr}{\mbox{\large$\wr$}}
\nc{\tR}{\widetilde{\mathscr{R}}}
\nc{\lS}{\mathsf{S}}
\nc{\lZ}{\mathcal{Z}}
\nc{\prolim}{\mathop{\varprojlim}\limits}
\nc{\sym}{\sg}

\nc{\txi}{\tilde{\xi}}
\nc{\rl}{\rlQ}

\nc{\rev}{{\ms{1mu}\mathrm{rev}}}

\nc{\Prob}{\begin{problem}}
\nc{\enprob}{\end{problem}}
\nc{\Quest}{\begin{question}}
\nc{\enques}{\end{question}}
\nc{\Conj}{\begin{conjecture}}
\nc{\enconj}{\end{conjecture}}


\nc{\bigtens}{\mathop{\scalebox{.75}%
{\raisebox{.2ex}{$\displaystyle\bigotimes$}}\ms{2mu}}\limits}
\nc{\ble}{\preccurlyeq}
\nc{\bge}{\succcurlyeq}
\nc{\st}[1]{\{{#1}\}}
\nc{\wg}{affinizable real simple\xspace}
\nc{\re}{\mathrm{re}}

\nc{\Sym}{\mathfrak{S}}
\nc{\Rmat}{\mathrm{R}}
\nc{\Rn}{\Rmat^{\mathrm{norm}}}
\nc{\Rnor}{\mathrm{R}^{\mathrm{norm}}}
\nc{\cct}{\mathbin{\scalebox{1.2}{$\star$}}}
\nc{\mnoi}{\medskip\noi}
\nc{\Cs}[1][w]{\catC_{{#1}}^{\ms{2mu}*}}
\nc{\tCs}[1][w]{ \widetilde{\mathscr{C}}^{\ms{2mu}*}_{#1}}
\nc{\pwtl}{\wlP_+}
\nc{\stens}{\mathop\otimes\limits}

\newcommand{\indlim}[1][]{\mathop{\varinjlim}\limits_{#1}}
\nc{\afr}{affreal\xspace}
\nc{\ssim}{\raisebox{-.8ex}[.5ex][.2ex]{$\sim$}}

\numberwithin{equation}{section}

\title[Localizations for quiver Hecke algebras II]
{Localizations for quiver Hecke algebras II}

\author[M. Kashiwara]{Masaki Kashiwara}
\thanks{The research of M.\ Kashiwara
was supported by Grant-in-Aid for Scientific Research (B) 20H01795,
Japan Society for the Promotion of Science.}
\address[M. Kashiwara]{
Kyoto University Institute for Advanced Study, Research Institute
for Mathematical Sciences, Kyoto University, Kyoto 606-8502, Japan
\& Korea Institute for Advanced Study, Seoul 02455, Korea }
\email[M. Kashiwara]{masaki@kurims.kyoto-u.ac.jp}

\author[M. Kim]{Myungho Kim}
\address[M. Kim]{Department of Mathematics, Kyung Hee University, Seoul 02447, Korea}
\email[M. Kim]{mkim@khu.ac.kr}
\thanks{The research of M.\ Kim was supported by the National Research Foundation of
Korea (NRF) Grant funded by the Korea government(MSIP)
(NRF-2017R1C1B2007824  and NRF-2020R1A5A1016126).}

\author[S.-j. Oh]{Se-jin Oh}
\thanks{ The research of S.-j.\ Oh was supported by the Ministry of Education of the Republic of Korea and the National Research Foundation of Korea (NRF-2022R1A2C1004045).}
\address[S.-j. Oh]{Department of Mathematics, Ewha Womans University, Seoul 03760, Korea}
\email[S.-j. Oh]{sejin092@gmail.com}

\author[E. Park]{Euiyong Park}
\thanks{The research of E.\ Park was supported by the National Research Foundation of Korea (NRF) Grant funded by the Korea Government(MSIP)(NRF-2020R1F1A1A01065992 and NRF-2020R1A5A1016126).}
\address[E. Park]{Department of Mathematics, University of Seoul, Seoul 02504, Korea}
\email[E. Park]{epark@uos.ac.kr}

\keywords{Categorification, Localization, Monoidal category, Quantum unipotent coordinate ring, Quiver Hecke algebra}

\subjclass[2010]{18D10, 16D90,  81R10}

\date{August 2}

\begin{document}

\maketitle

\begin{abstract}
We prove that  the  localization  $\tcatC_w$ of the monoidal category $\catC_w$ is rigid, and  the category $\catC_{w,v}$ admits a localization via a real commuting family of central objects. 

For a  quiver Hecke algebra $R$ and an element $w$ in the Weyl group, the subcategory $\catC_w$ of the category $R\gmod$ of finite-dimensional graded $R$-modules  categorifies the quantum unipotent coordinate ring $\Aq$.
In  the previous paper,  we constructed 
 a monoidal category $\tcatC_w$  such that it contains $\catC_w$ and the objects $\set{\Mm(w\La_i,\La_i)}{i\in I}$ corresponding to  the  frozen variables are invertible.
 In this paper, we show that there is a monoidal equivalence between the category $\tcatC_w$ and $(\tcatC_{w^{-1}})^\rev$. 
Together with the  already known  left-rigidity of $\tcatC_w$,
it follows that the monoidal category $\tcatC_w$  is rigid.

If $v\ble w$ in the Bruhat order, there is a  subcategory $\catC_{w,v}$ of $\catC_w$ which categorifies the doubly-invariant algebra $^{N'(w)} \C[N]^{N(v)}$. 
We prove  that the family $\bl \Mm(w\La_i,v\La_i)\br_{i\in I}$ of simple $R$-module forms a real commuting family of graded central objects in the category $\catC_{w,v}$ so that there is a localization $\tcatC_{w,v}$ of $\catC_{w,v}$ in which $\Mm(w\La_i,v\La_i)$ are invertible.
Since the localization of the algebra $^{N'(w)} \C[N]^{N(v)}$ 
by the family of  the isomorphism classes of  $\Mm(w\La_i,v\La_i)$ is isomorphic to the coordinate ring $\C[R_{w,v}]$ of  the open Richardson variety associated with $w$ and $v$, 
the localization $\tcatC_{w,v}$ categorifies
the coordinate ring $\C[R_{w,v}]$.

\end{abstract}

\setcounter{tocdepth}{4}
\tableofcontents

\section*{Introduction}
In the previous work \cite{KKOP21}, we developed a general procedure for localizations of monoidal categories  and  studied in detail the case that the categories consist of modules over quiver Hecke algebras. 
This paper is a continuation of \cite{KKOP21}.
Roughly speaking, the localization of a  monoidal category in \cite{KKOP21} is a procedure to find  a larger monoidal category in which the prescribed objects are invertible.
Let $\cor$ be a commutative ring. For a $\cor$-linear monoidal category $\catT$, a pair $(C,\coR_C)$ of an object $C$ and a natural transformation $\coR_C\cl(C\tens -) \to (-\tens C)$ is called a \emph{braider in $\catT$} if $\coR_C$ is compatible with the tensor product $\tens$.
A family $\{(C_i,\coR_{C_i})\}$ of braiders in $\catT$ is called a \emph{real commuting family} if
$\coR_{C_i}{(C_i)}\in \cor^\times \id_{C_i\tens C_i}$ and  $\coR_{C_i}(C_j) \circ \coR_{C_j}(C_i)\in \cor^\times \id_{C_j\tens C_i}$. 
Then one can construct a  $\cor$-linear monoidal category $(\lT, \tens ,\one)$ and a monoidal functor $\Phi\cl \catT \to \lT$ such that the object $\Phi(C_i)$ are invertible and the morphisms $\Phi(R_{C_i}(X))\cl \Phi(C_i)\tens \Phi(X) \to \Phi(X)\tens \Phi(C_i)$ are isomorphisms for all $i$ and all $X \in \catT$.
Moreover the pair $(\lT,\Phi)$ is universal with respect to these properties.  
By the construction, any object in $\lT$ is of the form $X \tens (\displaystyle\otimes_i C_i^{\tens a_i})$ for some $X \in \catT$ and $a_i\in \Z$.
There is  also a \emph{graded version} of localization.
Assume that a graded monoidal category $\T$ has a decomposition $\T = \soplus_{\la \in \La} \T_\la$ for some abelian group $\La$, which is compatible with $\tens$, and the grading shift operator $q$.
Then one can define the notion of \emph{graded braider} $(C,\coR_C,\phi_C)$, where 
$\phi\cl\La \to\Z$ is a group homomorphism and  $\coR_C(X) \in  \Hom_{\T} (C\tens X, q^{\phi(\la)} X \tens C)$ for $X\in\catT_\la$.
For a real commuting family of graded braiders in $\T$, there exists a monoidal category $\lT$ and a functor $\Phi\cl \catT \to \lT$ which have the same properties as in the ungraded cases.

\smallskip
One of the motivations to develop such a general procedure is to localize monoidal categories consisting of modules over \emph{quiver Hecke algebras}.
Let $\g$ be a symmetrizable Kac-Moody algebra and $\rtl_+$ the root lattice of $\g$.
The quiver Hecke algebra associated with $\g$  is a family $\st{R(\beta)}_{\beta\in \rtl_+}$ of $\Z$-graded associative algebras over $\cor$ 
such that the Grothendieck ring $K(R\gmod)$ is isomorphic to the 
\emph{quantum unipotent coordinate ring} $\Aqn$ which is isomorphic to the dual $(\Uqp)^*$ of the half of the quantum group $\Uq$ (\cite{KL09, R08}).
Here $R\gmod$ denotes the direct sum of the categories of finite-dimensional graded $R(\beta)$-modules.
For an $R(\beta)$-module $M$ and an $R(\gamma)$-module $N$, the \emph{convolution product} 
$M\conv N$ is the $R(\beta+\gamma)$-module induced from the $R(\beta)\tens R(\gamma)$-module $M\tens N$ through the (non-unital) algebra embedding $R(\beta)\tens R(\gamma) \to R(\beta+\gamma)$.
The category $R\gmod$ together with the convolution product  is a monoidal category and the convolution product corresponds to the multiplication of the algebra $\Aqn$. 
One of main advantages in the case  $\catT=R\gmod$ is that for any simple module $C$ in $R\gmod$, there exists a unique \emph{non-degenerate graded braider}  $(C,\coR_C, \phi_C)$.  Here a braider is non-degenerate  if $\coR_C(L(i))$ doesn't vanish for any $i$, where $L(i)$ denotes the unique graded simple module over $R(\al_i)$ and $\set{\al_i}{i\in I}$  is the set of simple roots of $\g$.
Hence one can consider localizations for various monoidal subcategories of $R\gmod$.

For an element $w$ in the Weyl group $W$ of $\g$, there is a subalgebra $\Aq$ of $\Aqn$ called the \emph{quantum unipotent coordinate ring  associated with $w$}
whose  limit  at $q=1$ becomes the coordinate ring of the unipotent subgroup $N(w)$ associated with $w$. Note that the Lie algebra of $N(w)$ is  $\mathfrak n(w)\seteq\hs{-3ex}\displaystyle\soplus_{\alpha \in \Delta_+\cap w\Delta_-}\hs{-2ex} \g_{\alpha}$, where $\Delta_{\pm}$ is the set of positive/negative roots of $\g$.
The algebra $\Aq$ is interesting since it equips a quantum cluster algebra structure (\cite{GLS13, GY14,GY17}).  
We denote the set of frozen variables by $\{D(w\La_i,\La_i)\}_{i\in I}$. 
Note that the element $D(w\La_i,\La_i)$ is not invertible in the algebra $\Aq$. 
If one localizes $\Aq$ at the set $\{D(w\La_i,\La_i)\}_{i\in I}$, then one gets a $q$-deformation $\Aq[N^w]$ of the coordinate ring $\C[N^w]$, where $N^w$ denotes  the \emph{unipotent cell associated with $w$} which can be identified with an open subset of $N(w)$ (\emph{De Concini-Procesi isomorphism}, see \cite[Theorem 4.13]{KO21}).
Now one can naturally associate the algebra $\Aq$ with a full subcategory $\catC_w$ of $R\gmod$ 
whose Grothendieck ring $K(\catC_w)$ is isomorphic to $\Aq$.
It is interesting  not only that the Grothendieck ring is isomorphic to $\Aq$, but also the category $\catC_w$ reflects the quantum cluster algebra structure on $\Aq$. Indeed, every cluster monomial in $\Aq$ corresponds to a real simple module in $\catC_w$, provided $\g$ is symmetric and $\cor$ is a field of characteristic zero (\cite{KKKO18}). 
Now the localization at the category level is exactly as one might imagine: each element $D(w\La_i,\La_i)$ corresponds to a simple module  $\Mm(w\La_i,\La_i)$ in  $\catC_w$, and the set $(\Mm(w\La_i,\La_i),R_{\Mm(w\La_i,\La_i)}, \phi_{\La_i})_{i\in I}$ forms a real commuting family of graded braiders in $\catC_w$ (\cite[Proposition 5.1]{KKOP21}). 
Hence the localization $\tcatC_w$ of $\catC_w$ with respect to the family categorifies the $q$-deformation  $\Aq[N^w]$ of the coordinate ring $\C[N^w]$ of the unipotent cell $N^w$ (\cite[Corollary 5.4]{KKOP21}). 
We emphasize that the set $(\Mm(w\La_i,\La_i),R_{\Mm(w\La_i,\La_i)}, \phi_{\La_i})_{i\in I}$ is also a real commuting family of graded braiders in the category $R\gmod$ and hence one has the localization $\tRm[w]$ of $R\gmod$ with respect to the family. Let us denote $Q_w\cl R\gmod \to \tRm[w]$ the localization functor.  Since the composition $\catC_w \hookrightarrow R\gmod \To[Q_w] \tRm[w]$ factors through the localization $\tcatC_w$, one obtains a functor $\iota_w\cl \tcatC_w \to \tRm[w]$, which turns out to be an equivalence (\cite[Theorem 5.9]{KKOP21}). 
This property  enables us  in particular  to show that the monoidal category  $\tcatC_w$ is left-rigid,i.e., every object in $\tcatC_w$ admits a left-dual (\cite[Corollary 5.11]{KKOP21}).  
This remarkable feature  can be understood as a (monoidal) categorification of the quantum twisted map on the algebra $\Aq[N^w]$ (see \cite{KO21} and references therein). 

\smallskip
One of the main results of this paper is that the category  $\tcatC_w$  is also right-rigid, 
i.e., every object in $\tcatC_w$ admits a right-dual (Theorem \ref{th:rigid}).  
We achieve this by showing that there is a monoidal equivalence between the category $\tcatC_w$ and $(\tcatC_{w^{-1}})^\rev$ (Theorem \ref{th:main1}). 
Here $\catT^\rev$ denotes the monoidal category $(\catT, \tens^\rev)$ where the reversed tensor product $\tens^\rev$ is defined by
$X\tens^\rev Y \seteq Y\tens X$ and $f\tens^\rev g\seteq g\tens f$ for any objects $X,Y 
$ and morphisms $f,g\in \catT$. 
Then the left-rigidity of $\tcatC_{w^{-1}}$ implies the right-rigidity of $\tcatC_{w}$.
The strategy for constructing the equivalence is briefly as follows.
There is an algebra automorphism $\psi$ on $R(\beta)$ (see \eqref{eq:psi})  
which induces a monoidal equivalence $\psi_*\cl R\gmod \to (R\gmod)^\rev$.
Then the composition $R\gmod\To[\psi_*] (R\gmod)^\rev\To[Q_{w^{-1}}] (\tRm[{w^{-1}}])^\rev$ factors as
$R\gmod \To[Q_w] \tRm[w]  \To[F_{w^{-1}}] (\tRm[{w^{-1}}])^\rev$, and the functor $F_{w^{-1}}$ is the desired equivalence of categories. 
One of key conditions to get such a factorization is that $(Q_{w^{-1}}\circ \psi_*)(\Mm(w\La_i,\La_i))$ is invertible in 
$R\gmod[{w^{-1}}]\simeq \tcatC_{w^{-1}}$ for each $i\in I$.
Hence 
we need to study the structure of modules $\psi_*(\Mm(w\La_i,\La_i))$. 
Recall that the modules $\Mm(w\La_i,\La_i)$ 
are examples of the \emph{determinantial modules} which have been studied in detail in \cite{KKOP18, KKOP21}. 
In general,  for a dominant integral weight $\La$ and Weyl group elements $v\ble u$ in the Bruhat order, there exists a distinguished element $D(u\La, v\La)$ of $\Aqn$, called the \emph{unipotent quantum minor}, and the \emph{determinantial module} $\Mm(u\La,v\La)$ is the simple module corresponding to $D(u\La, v\La)$ under the isomorphism  $K(R\gmod)\simeq \Aqn$. 
Even the module $\psi_*(\Mm(w\La_i,\La_i))$ 
is no longer a determinantial module in general, it turns out that it  shares many of properties of determinantial modules. 
We characterize such a  family of simple modules 
 and call them the \emph{generalized determinantial modules} (see Theorem \ref{thm:gdm}). 
It enables us to calculate the module $\psi_*(\Mm(w\La_i,\La_i))$   quite explicitly and to see that  $(Q_{w^-1} \circ \psi_*)(\Mm(w\La_i,\La_i))$ is invertible in  $\tcatC_{w^{-1}}$. 

\smallskip
The other main result of this paper is a localization of the category $\catC_{w,v}$ for a pair of Weyl group elements $w,v$ such that  $v\ble w$ in the Bruhat order.
The category $\catC_w$ can be characterized as the full subcategory consisting of modules $M\in R(\beta)\gmod$ such that $\Res_{\gamma,\beta-\gamma} (M) \not= 0$ 
implies that 
$\gamma \in \rtl_+\cap w\rtl_-$. 
Here $\Res_{\al,\beta}$  denotes the restriction functor $R(\al+\beta)\gmod \to R(\al) \tens R(\beta)\gmod $.
Similarly, we define the category $\catC_{*, v}$  as the full subcategory $R\gmod$ consisting of modules $M\in R(\beta)\gmod$ such that $\Res_{\beta-\gamma, \gamma} (M) \not= 0$ 
implies that  $\gamma \in \rtl_+\cap v\rtl_+$. 
We set $\catC_{w,v}\seteq\catC_{w} \cap \catC_{*,v}$. 
Then the Grothendieck ring $K(\catC_{w,v})$  can be understood as a $q$-deformation of the doubly-invariant algebra $^{N'(w)} \C[N]^{N(v)}$, where $N$ is the unipotent radical, $N_-$ is the opposite of $N$, and 
$N'(w)\seteq N\cap(wNw^{-1})$, $N(v)\seteq N\cap(vN_-v^{-1})$ (see \cite[Remark 2.19]{KKOP18}).
It is known that the localization $^{N'(w)} \C[N]^{N(v)}$ at the set $\st{D(w\La_i,v\La_i)}_{i \in I}$ is isomorphic to the coordinate ring $\C[R_{w,v}]$ of the \emph{open Richardson variety $R_{w,v}$ associated with $w$ and $v$} (\cite[Theorem 2.12]{Lec16}).  
Hence a localization of the category $\catC_{w,v}$ with respect to the set of determinantial modules $\{\Mm(w\La_i,v\La_i)\}_{i\in I}$ would give a categorification of (a $q$-deformation of) the coordinate ring $\C[R_{w,v}]$. 
We  show that there exists a  graded braider $(\Mm(w\La_i,v\La_i),R_{\Mm(w\La_i,v\La_i)},\phi_{w,v,\La_i})$ in the category $\catC_{*,v}$ (Proposition \ref{prop:psiwvla}). 
The key idea for this is to take a restriction of the braider $\coR_{\Mm(w\La_i,\La_i)}$ in $R\gmod$. 
Indeed, for an object $X\in \catC_{*,v}\cap \bl R(\gamma)\gmod \br $ one can show that 
the restriction $\Res_{\gamma+\alpha, \beta} \bl \coR_{\Mm(w\La_i,\La_i) }(X)\br $ is equal to
$\bl\Mm(w\La_i,v\La_i)\conv X\br\tens \Mm(v\La_i,\La_i)\to 
\bl X\conv \Mm(w\La_i,v\La_i)\br\tens \Mm(v\La_i,\La_i)$, where  $\beta = \La_i-v\La_i$, and $\al=v\La_i-w\La_i$.
We have such a nice form of restriction because the pairs 
$\bl\Mm(w\La_i,v\La_i),\Mm(v\La_i,\La_i)\br$  and $\bl X,  \Mm(v\La_i,\La_i)\br$ are distinguished pairs of simple modules called \emph{unmixed pairs} (see Section \ref{subsubsec:unmixed}).
Now since  $\End(\Mm(v\La_i,\La_i))\simeq \cor$, we obtain the desired
 homomorphism $R_{\Mm(w\La_i,v\La_i)}(X) $ from  $\Mm(w \La_i ,v\La_i)\conv X$ to $X\conv \Mm(w\La_i,\La_i)$.
 It follows that the family
 $\{(\Mm(w\La_i,v\La_i),R_{\Mm(w\La_i,v\La_i)},\phi_{w,v,\La_i})\}_{i\in I}$ is a real commuting family of graded braiders in the category $\catC_{*,v}$ due to the corresponding properties of 
 the family  in $R\gmod$
$\{(\Mm(w\La_i,\La_i), \coR_{\Mm(w\La_i,\La_i)}, \phi_{\La_i})\}_{i\in I}$.  
Let $\tcatC_{*,v}[w]$ and $\tcatC_{w,v}$ be the localization of  $\catC_{*,v}$ and $\catC_{w,v}$, respectively via 
the family $(\Mm(w\La_i,v\La_i),R_{\Mm(w\La_i,v\La_i)},\phi_{w,v,\La_i})$. 
Then similarly to the case of $\tRm[w]$ and $\tcatC_{w}$, there is  a monoidal equivalence between  $\tcatC_{*,v}[w]$ and $\tcatC_{w,v}$ (Theorem \ref{thm:equiv}).
 It is expected that the category  $\tcatC_{w,v}$  gives a monoidal categorification of a $q$-deformation of the cluster algebra arising from the open Richardson variety $R_{w,v}$ given in \cite{Lec16}. 

\smallskip
Let us explain some miscellaneous results in this paper which are not only used for the main theorems but also interesting by themselves.
We characterize the simple modules that vanish under the localization functor $Q_w\cl R\gmod \to \tRm[w]$. 
Recall that the self-dual simple modules in $R\gmod$ are in bijection with the crystal basis $B(\infty)$ of $\Aqn$ (\cite{LV11}).
It turns out that a simple module $M$ doesn't vanish under $Q_w$ if and only if $M$ matches an element  $b$ in $B_w(\infty)$, where $B_w(\infty)$ is a subset of $B(\infty)$  introduced in \cite{Kas93}, which is the limit of the Demazure crystals.
Let $I_w$ be the subspace of $\Aq[n]$ spanned by the upper global basis elements corresponding to the elements crystal basis in $B(\infty) \setminus B_w(\infty)$. 
Then $I_w$ is a two-sided ideal and the quotient $\Aqn/I_w$ is called the \emph{quantum closed unipotent cell} in \cite{Kimura12}. 
Then the equivalence $\iota_w \cl \tcatC_w \to \tRm[w]$ can be understood as a categorification of the isomorphism in \cite[Theorem 4.13]{KO21}, provided $\g$ is symmetric and $\cor$ is a field of characteristic zero (see Remark \ref{rm:KO}).

For simple modules $X\in R(\beta)\gmod$ and $Y\in R(\gamma)\gmod$ such that one of them is \emph{\afr} (see Definition \ref{def:affreal}), there is a  distinguished homomorphism  $\rmat{X,Y}\seteq X\conv Y \to Y\conv X$, called the \emph{$R$-matrix}.  
As the $R$-matrix is a crucial feature of the category $R\gmod$, the integers $\La(X,Y)$ and $\tLa(X,Y)$ (which is non-negative) play important roles
in the representation theory  of the quiver Hecke algebras (see, for example, \cite{KKOP18}).
Let $L$, $M$ and $N$ be  simple modules and  assume that $L$  is \afr. Then we show that  for any simple subquotient $S$ of $M\conv N$ we have $\tLa(L,M)\le \tLa(L,S)$  (Theorem \ref{th:strong}). 
This theorem is strong in the sense that  $\tLa(L,S)$ is bounded below by a number which doesn't involve with $N$ at all. 
A corollary  of this theorem (Corollary \ref{cor:Normal}) is an analogue of of \cite[Lemma 4.17]{KKOP20}, which holds in the category of finite-dimensional modules over quantum affine algebras. 

\smallskip
This paper is organized as follows.
In Section 1, we recall some preliminaries containing the localization of monoidal categories, quiver Hecke algebras, and determinantial modules, etc.
In Section 2, we develop some features in $R\gmod$, including unmixed pairs, normal sequences, and generalized determinantial modules. 
In Section 3, we show that there is a monoidal equivalence between $\tcatC_w$ and $\bl\catC_{w^{-1}}\br^\rev$ which implies that $\tcatC_w$ is rigid.
In Section 4. we construct a real commuting family of graded braiders $\{(\Mm(w\La_i,v\La_i),R_{\Mm(w\La_i,v\La_i)},\phi_{w,v,\La_i})\}_{i\in I}$ in the category $\catC_{*,v}$.  
It turns out that the localization $\catC_{*,v}[w]$ of $\catC_{*,v}$ and 
the localization $\tcatC_{w,v}$ of $\catC_{w,v}$ are monoidally equivalent.

\medskip

{\bf Acknowledgments}
 We thank Yoshiyuki Kimura for his suggestion and fruitful discussion.

\section{Preliminaries} \label{Sec: Preliminaries}
\subsection{Localizations of monoidal categories via braiders}\

\subsubsection{Monoidal categories} \label{subsub:mon}

A \emph{monoidal category} (or \emph{tensor category}) is a datum consisting of
\begin{enumerate}[\rm (a)]
\item a category $\catT$,
\item  a bifunctor $\cdot \otimes \cdot \cl \catT \times \catT \rightarrow \catT$,
\item an isomorphism $a(X,Y,Z)\cl (X \otimes Y) \otimes Z \buildrel \sim \over \longrightarrow X \otimes (Y \otimes Z)$ which is functorial in $X,Y,Z \in \catT$,
\item an object $\triv$, called an \emph{unit object}, endowed with
an isomorphism  $\epsilon\cl
\triv \otimes \triv \isoto\triv$
\end{enumerate}
such that
\be[{(1)}]
\item
the diagram below commutes for all $X,Y,Z,W \in \catT$:
$$
\xymatrix{
( ( X \otimes Y ) \otimes Z ) \otimes W  \ar[d]_{ a( X,Y,Z) \otimes W  }  \ar[rr]^{ a(X \otimes Y,Z,W) } &&  \ar[dd]^{ a( X,Y,Z \otimes W)  }    ( X \otimes Y ) \otimes (Z  \otimes W)   \\
 (  X \otimes (Y  \otimes Z) ) \otimes W  \ar[d]_{ a( X,Y\otimes Z, W)  }   &&    \\
 X \otimes ( (Y \otimes Z ) \otimes W ) \ar[rr]_ {X \otimes a( Y,Z,W)} &&    X \otimes ( Y \otimes (Z \otimes W )) \,,
}
$$
\item the functors $\catT\ni X\mapsto \one \tens X\in \catT$
and  $\catT\ni X\mapsto X\tens \one\in\catT$ are fully faithful.
\ee

We have canonical isomorphisms $\triv\tens X\simeq X\tens \triv\simeq X$
for any $X\in\catT$.
For $n \in \Z_{> 0}$ and $X\in\catT$, we set $X^{\otimes n} = \underbrace{X \otimes \cdots \otimes X}_{n \text{ times}}$, and $X^{\otimes\, 0} = \triv$.

For monoidal categories $\catT$ and $\catT'$, a functor $ F \cl  \catT \rightarrow \catT'$ is called \emph{monoidal}
if it is endowed with an isomorphism $\xi_F\cl F(X \otimes Y)  \buildrel \sim \over \longrightarrow F(X) \otimes F(Y)$ which is functorial in $X,Y \in \catT$ such that
the diagram
$$
\xymatrix{
F( ( X \otimes Y ) \otimes Z )   \ar[d]_{ \xi_F(X \otimes Y, Z)  }  \ar[rrr]^{ F( a(X , Y,Z)) } &&&  \ar[d]^{ \xi_F(X , Y \otimes Z)  }   F ( X \otimes (Y   \otimes  Z) )   \\
 F(  X \otimes Y)   \otimes F(Z)   \ar[d]_{ \xi_F(X , Y)\otimes F( Z)  }   &&& \ar[d]^{ F(X)\otimes \xi_F( Y \otimes Z)  }   F ( X )\otimes F(Y   \otimes  Z) )   \\
 ( F(X) \otimes F (Y) ) \otimes F(Z ) \ar[rrr]_ { a(F(X) , F(Y),F(Z))  } &&&     F ( X ) \otimes (F(Y)   \otimes  F(Z) )  
}
$$
commutes for all $X,Y,Z \in \catT$. We omit to write $\xi_F$ for simplicity.
A monoidal functor $F$ is  called \emph{unital} if $(F(\triv), F(\epsilon))$ is a unit object.
In this paper, we simply write
a ``monoidal functor'' for a unital monoidal functor.

We say that a monoidal category $\catT$ is an \emph{additive} (resp.\ \emph{abelian}) monoidal category if $\catT$ is additive (resp.\ abelian) and the bifunctor $\cdot \otimes \cdot$ is bi-additive.
Similarly, for a commutative ring $\bR$, a monoidal category $\catT$ is \emph{$\bR$-linear} if $\catT$ is $\bR$-linear and the bifunctor $\cdot \otimes \cdot$ is $\bR$-bilinear.

An object $X \in \catT$ is \emph{invertible} if the functors $\catT \rightarrow \catT$ given by $Z \mapsto Z \otimes X$ and $Z \mapsto X \otimes Z$ are equivalence of categories.
If $X$ is invertible, then one can find an object $Y$ and isomorphisms $f\cl X \otimes Y \buildrel \sim\over \rightarrow \triv$ and
$g\cl Y \otimes X \buildrel \sim\over \rightarrow \triv$ such that the diagrams below commute:
$$
\xymatrix{
X \otimes Y \otimes X  \ar[rr]^{ f \otimes X } \ar[d]_{ X \otimes g  } &&  \ar[d]  \triv \otimes X   \\
 X \otimes \triv \ar[rr]_ {  } &&   X,
}
\quad
\xymatrix{
Y \otimes X \otimes Y  \ar[rr]^{ g \otimes Y } \ar[d]_{ Y \otimes f  } &&  \ar[d]  \triv \otimes Y   \\
 Y \otimes \triv \ar[rr]_ {  } &&   Y.
}
$$
The triple $(Y,f,g)$ is unique up to a unique isomorphism. We write $Y = X^{\otimes -1}$.

\medskip
For a monoidal category $\sht$, we define a new monoidal category $\sht^\rev$ as the
 category $\sht$ endowed with the new bifunctor
$\tens^\rev$ defined by
$X\tens^\rev Y \seteq Y\tens X$ and $f\tens^\rev g\seteq g\tens f$ for any objects $X,Y$ in $\sht$ and   for any morphisms $f,g$ in $\sht$, respectively. 
The associativity constraints are given as  $a^\rev(X,Y,Z)\seteq a(Z,Y,X)^{-1}$.
The unit $(\one,\epsilon)$ of $\sht$ serves as a  unit of $\sht^\rev$, too. 
Let $\shf \cl\shc \to \sht$ be a monoidal functor. Then $\shf \cl\shc^\rev \to \sht^\rev$ is again a monoidal functor.

\medskip

A pair of  morphisms $\ep\cl X \otimes  Y \rightarrow \triv $ and $ \eta\cl \triv \rightarrow Y \otimes X$  in $\catT$ is 
called an \emph{adjunction}  if the composition
$X \simeq X \otimes \triv \To[{X \otimes \eta}] X \otimes Y \otimes X
\To[{\ep \otimes X}] \triv \otimes X \simeq X$
is the identity of $X$, and
 the composition $Y \simeq \triv \otimes Y \To[{\eta \otimes Y}]
 Y \otimes X \otimes Y \To[{Y \otimes \ep}]Y \otimes \triv \simeq Y$
is the identity of $Y$.
In the case when $(\ep, \eta)$ is an adjunction, we say that
$X$ is a \emph{left dual} to $Y$ and $Y$ is a \emph{right dual} to $X$ in $\catT$.
A monoidal category $\catT$ is \emph{left \ro respectively, right\rf rigid} if every object in $\catT$ has a left (respectively, right) dual. We call $\catT$ is  rigid, if it is left rigid and right rigid.

\subsubsection{Real commuting family of graded braiders} \label{Sec: RCB}  \

 In this subsection, we recall the notions of braiders and localization
introduced in \cite{KKOP21}. We refer the reader to loc.\ cit.\ for more details.

\begin{df}\ \label{def: braider}
A \emph{left braider}, simply a braider in the sequel, of a monoidal category $\catT$ is a pair $(C, R_C)$ of an object $C$ and a morphism
\begin{align*}
R_C(X)\cl  C \otimes X \longrightarrow X \otimes C
\end{align*}
which is functorial in $X \in \catT$ such that the following diagrams commutes:
\begin{equation} \label{Eq: central obj}
\begin{aligned}
\xymatrix{
C \otimes X \otimes Y   \ar[rr]^{R_C(X)\otimes Y}  \ar[drr]_{R_C(X \otimes Y)\ \ }  &  &   X \otimes C \otimes Y  \ar[d]^{X \otimes R_C(Y)}   \\
& &   X \otimes Y \otimes C  ,
}
\ \
\xymatrix{
C \otimes \triv   \ar[rr]^{R_C(\triv)}  \ar[drr]_{ \simeq }  &  &   \triv \otimes C   \ar[d]^{ \wr}   \\
& &    C.
}
\end{aligned}
\end{equation}

A braider $ ( C, R_{C}  )$ is called a \emph{central object} if $ R_{C}(X)$ is an isomorphism for any $X \in \catT$.

\end{df}

Let $\bR$ be a commutative ring and let  $\Lambda$ be a $\Z$-module. A $\bR$-linear monoidal category $\catT$ is \emph{$\Lambda$-graded} if $\catT$ has a decomposition
$ \catT = \bigoplus_{\lambda \in \Lambda} \catT_\lambda $ such that $\triv \in \catT_0$ and $\otimes$ induces a bifunctor $\catT_{\lambda} \times \catT_{\mu} \rightarrow \catT_{\lambda+\mu}$
for any $\lambda, \mu \in \Lambda$.
Let $q$ be an invertible central object in a $\Lambda$-graded category $\catT$, which belongs to $\catT_0$. 
We write $q^n$ ($n\in\Z$) for $q^{\tens n}$ for the sake of simplicity. 

\begin{df} \label{def:graded braider}
A \emph{graded braider} is a triple $(C, R_C, \dphi)$ of an object $C$, a $\Z$-linear map $\dphi\cl  \Lambda \rightarrow \Z$ and a morphism
$$
R_C(X) \cl  C \otimes X \longrightarrow q^{\dphi(\lambda)} \otimes X \otimes C
$$
such that the diagrams
$$
\xymatrix{
C \otimes X \otimes Y   \ar[rr]^{R_C(X)\otimes Y}  \ar[drr]_{R_C(X \otimes Y) \ \ }  &  &   q^{\dphi(\lambda)} \otimes X \otimes C \otimes Y  \ar[d]^{ X \otimes R_C(Y)}   \\
  & &   q^{ \dphi(\lambda+\mu) } \otimes X \otimes Y \otimes C
}
\qtq 
\xymatrix{
C \otimes \triv   \ar[rr]^{R_C(\triv)}  \ar[drr]_{ \simeq }  &  &   \triv \otimes C   \ar[d]^{ \wr}   \\
& &    C
}
$$
commute for any $X \in \catT_\lambda$ and $Y \in \catT_{\mu}$.
\end{df}

We denote by $\catTc$ the category of graded  braiders in $\catT$.
A morphism from $(C,R_C, \phi)$ to $(C', R_{C'},\phi')$ in $\catTc$ is 
a morphism $f \in \Hom_\catT(C,C')$ such that   $\phi=\phi'$ 
and the following diagram commutes for any $\la \in \Lambda$
and $X \in \catT_\la$:
$$
\xymatrix{
C \otimes X    \ar[rr]^{f \otimes X}  \ar[d]_{R_C(X )}  &  &  C' \otimes X  \ar[d]^{ R_{C'}(X)}   \\
q^{\phi(\la)} \tens X \otimes C  \ar[rr]^{q^{\phi(\la)} \tens  X \otimes f} & &   q^{\phi(\la)} \tens  X \otimes C'.
}
$$
For graded braiders $(C_1, R_{C_1}, \phi_1)$ and $(C_2, R_{C_2},\phi_1)$ of $\catT$, let
$ R_{C_1 \otimes C_2}(X)$ be the composition
$$
 C_1 \otimes C_2 \otimes X \To[{ R_{C_2}(X)}] q^{\phi_1(\la)}\tens C_1 \otimes X \otimes C_2
\To[{R_{C_1}(X) }]  q^{\phi_1(\la)+\phi_2(\la)}\tens  X \otimes C_1 \otimes C_2
$$
for $X \in \catT_\la$. Then $(C_1 \otimes C_2, R_{C_1\otimes C_2}, \phi_1+\phi_2) $ is also a graded braider of $\catT$.
Hence the category $\catTc$ is a monoidal category with a  a canonical faithful monoidal functor $\catTc \rightarrow \catT$.

\vskip 2em
Let $I$ be an index set and let $ \st{(C_i, R_{C_i}, \dphi_i )}_{i\in I} $ be a 
family of graded braiders.
We say that $\st{(  C_i ,  R_{C_i} ,  \dphi_i )}_{i\in I}$ is a \emph{real commuting family of graded braiders} in $\catT$ if
\bna
\item  \label{Eq: 1 in grcf} $ C_i \in \catT_{\lambda_i}$  for some $\lambda_i \in \Lambda$, and
$\dphi_i(\lambda_i) = 0$, $\dphi_i( \lambda_j ) + \dphi_j( \lambda_i ) = 0$,
\item  $R_{C_i}(C_i) \in \bR^\times \id_{C_i \otimes C_i}$ for $i\in I$,
\item   \label{Eq: 2 in grcf} $R_{C_j}(C_i) \circ R_{C_i}(C_j) \in \bR^\times \id_{C_i \otimes C_j}$ for $i,j\in I$.
\end{enumerate}

Set
$$
\lG \seteq  \Z^{\oplus I} \quad \text{ and } \quad \lG_{\ge 0} \seteq  \Z_{\ge 0}^{\oplus I}.
$$

Let $\{ e_i \mid i\in I \} $ be the natural basis of $\lG$.
We define a $\Z$-linear map
$$
\gL\cl  \lG \rightarrow \Lambda, \qquad  e_i \mapsto  \lambda_i \text{ for } i\in I,
$$
and a $\Z$-bilinear map
\begin{align*}
\dphi &\cl  \lG \times \Lambda \rightarrow \Z, \qquad (e_i, \lambda)  \mapsto \dphi_i(\lambda).
\end{align*}

We choose a $\Z$-bilinear map $\gH\cl    \lG \times \lG \rightarrow \Z$  such that  $ \dphi_i( \lambda_j ) =  \gH(e_i, e_j) - \gH(e_j, e_i) $ for any $i,j\in I$.
Then we have
\begin{align} \label{Eq: dphi gL}
 \dphi(\alpha, \gL(\beta)) = \gH(\alpha, \beta) - \gH(\beta , \alpha) \quad \text{for any $\alpha, \beta \in \lG $.}
\end{align}
 Let us denote by $\phi_\al$ the $\Z$-linear map $\phi(\alpha, -)\cl \Lambda \to \Z$ for each $\al \in \lG$.

\begin{lem}[{\cite[Lemma 2.3, Lemma 1.16]{KKOP21}}]
Let $\st{( C_i ,  R_{C_i}, \phi_i )}_{i\in I}$
be a real commuting family of graded braiders in  $\catT$.
\bnum
\item
There exists a family $\{\eta_{ij}\}_{i,j\in I}$
of elements in $\corp^\times$ such that
\eqn
R_{C_i}(C_i)&&=\eta_{ii}\; \id_{C_i \otimes C_i},\\
R_{C_j}(C_i) \circ R_{C_i}(C_j)&& = \eta_{ij}\eta_{ji}\;\id_{C_i \otimes C_j}
\eneqn
for all $ i ,j \in I$.

\item 
There exist
a graded braider $C^\al=(C^\alpha, R_{C^\alpha}, \phi_\alpha )$ for each $\alpha \in \lG_{\ge0}$,
and an isomorphism $\xi_{\alpha, \beta}\cl  C^\alpha \otimes C^\beta \buildrel \sim\over \longrightarrow q^{H(\al,\beta)}\tens C^{\alpha+\beta}$ in $\catT_{br}$ for $\alpha, \beta \in \lG_{\ge0}$
such that 
\bna
\item  $C^0=\triv$  and $ C^{e_i} = C_i $ for $i\in I$,
\item  the diagram in $\catT_{br}$ 
\begin{equation} \label{Eq: xi sum}
\begin{aligned}
\xymatrix{
C^\alpha \otimes C^\beta \otimes C^\gamma \ar[d]_{ C^\alpha \otimes \xi_{\beta, \gamma}} \ar[rr]^{\xi_{\alpha, \beta} \otimes C^\gamma} && \ar[d]^{\xi_{\alpha+\beta, \gamma}} q^{H(\al,\beta)}\tens C^{\alpha+\beta}\otimes C^\gamma \\
q^{H(\beta,\gamma)} \tens C^\alpha \otimes C^{\beta + \gamma} \ar[rr]^{\xi_{\alpha, \beta+\gamma}} && q^{H(\al,\beta)+H(\al,\gamma)+H(\beta,\gamma)} \tens C^{\alpha+\beta +\gamma}
}
\end{aligned}
\end{equation}
commutes for any $\alpha, \beta, \gamma \in \lG_{\ge0}$,
\item the diagrams  in $\catT_{br}$ 
\begin{equation} \label{Eq: CF ij}
\begin{aligned}
\ba{ccc}
\xymatrix{
C^0 \otimes C^0  \ar[d]^-{ \bwr } \ar[rr]^{\xi_{0,0} } && \ar[d]^-{ \bwr } C^{0}\\
\triv \otimes \triv  \ar[rr]^{ \simeq} && \ \ \triv  \ ,
}
\ba{c}\\[3ex]\qtq\ea\quad
\xymatrix{
C^\alpha \otimes C^\beta \ar[d]_{ \xi_{\alpha, \beta}} \ar[rr]^{R_{C^\alpha}(C^\beta) } && \ar[d]^{\xi_{ \beta, \alpha}}q^{\phi(\al,L(\beta))}\tens  C^\beta \otimes C^\alpha \\
q^{H(\al,\beta)}\tens C^{\alpha + \beta} \ar[rr]^{\bce(\alpha,\beta)\; \id_{C^{\alpha+\beta}} } && q^{H(\al,\beta)}\tens C^{\alpha+\beta}
}
\ea
\end{aligned}
\end{equation}

commute for any $i,j\in I$ and $\alpha, \beta, \gamma \in \lG_{\ge 0}$, where
\begin{align} \label{Eq: eta}
\bce(\alpha, \beta) \seteq  \prod_{i,j \in I} \bce_{i,j}^{a_ib_j} \in \bR^\times
\quad \text{for $\alpha = \sum_{i\in I} a_i e_i$ and $\beta = \sum_{j\in I} b_j e_j$ in $\lG$.}
\end{align}

\end{enumerate}
\end{enumerate}

\end{lem}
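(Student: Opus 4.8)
The plan is to handle part (i) by a one–line cancellation argument, and part (ii) by realizing $C^\alpha$ as an ordered tensor product of the objects $C_i$ inside the monoidal category $\catTc$ of graded braiders and then running a coherence argument. For (i): since $\phi_i(\lambda_i)=0$, the morphism $R_{C_i}(C_i)$ is an endomorphism of $C_i\tens C_i$, hence of the form $\eta_{ii}\,\id$ with $\eta_{ii}\in\cor^\times$ by the hypothesis $R_{C_i}(C_i)\in\cor^\times\id$. For $i\neq j$, using $\phi_i(\lambda_j)+\phi_j(\lambda_i)=0$ one sees that $R_{C_j}(C_i)\circ R_{C_i}(C_j)=g_{ij}\,\id_{C_i\tens C_j}$ and $R_{C_i}(C_j)\circ R_{C_j}(C_i)=g_{ji}\,\id_{C_j\tens C_i}$ for some $g_{ij},g_{ji}\in\cor^\times$. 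Composing the first morphism with itself gives $g_{ij}^2\,\id$, while regrouping the two middle factors as $R_{C_i}(C_j)\circ R_{C_j}(C_i)=g_{ji}\,\id$ gives $g_{ij}g_{ji}\,\id$; hence $g_{ij}=g_{ji}$. Fixing a total order $\prec$ on $I$, I would then set $\eta_{ij}\seteq g_{ij}$ if $i\prec j$ and $\eta_{ij}\seteq 1$ if $i\succ j$, which yields $R_{C_j}(C_i)\circ R_{C_i}(C_j)=\eta_{ij}\eta_{ji}\,\id$ for all $i,j$ (the case $i=j$ being automatic from $R_{C_i}(C_i)=\eta_{ii}\id$).

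For (ii): with $\prec$ fixed, for $\alpha=\sum_i a_i e_i\in\lG_{\ge0}$ with support $i_1\prec\cdots\prec i_r$, I define $C^\alpha\seteq C_{i_1}^{\tens a_{i_1}}\tens\cdots\tens C_{i_r}^{\tens a_{i_r}}$, the tensor product formed in $\catTc$; since a tensor product of graded braiders is again a graded braider with $\phi$ additive, this produces $(C^\alpha,R_{C^\alpha},\phi_\alpha)$ with $\phi_\alpha=\phi(\alpha,-)$, and clearly $C^0=\triv$, $C^{e_i}=C_i$. By (i) each $R_{C_i}(C_j)$ is invertible, with two-sided inverse $g_{ij}^{-1}R_{C_j}(C_i)$ (and $R_{C_i}(C_i)=\eta_{ii}\id$ is invertible), so every elementary transposition acting on a tensor word in the $C_i$'s is an isomorphism. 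I would then let $\xi_{\alpha,\beta}\cl C^\alpha\tens C^\beta\to q^{H(\al,\beta)}\tens C^{\alpha+\beta}$ be the \emph{sorting morphism} that merges the two increasing words $C^\alpha$, $C^\beta$ into the increasing word $C^{\alpha+\beta}$ by a sequence of such elementary transpositions; the grading shifts accumulated along the way are governed by the bilinear form $H_0$ with $H_0(e_i,e_j)=\phi_i(\lambda_j)$ for $i\succ j$ and $H_0(e_i,e_j)=0$ otherwise, which satisfies \eqref{Eq: dphi gL}, and any other admissible $H$ differs from $H_0$ by a symmetric form that can be absorbed into a grading shift of $C^\alpha$. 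That $\xi_{\alpha,\beta}$ does not depend on the chosen sequence of transpositions — and is therefore a well-defined isomorphism, and a morphism in $\catTc$ — I would deduce from the functoriality of the $R_{C_i}$ together with the braider compatibility diagram \eqref{Eq: central obj}, i.e.\ a Mac Lane-type coherence.

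It then remains to check the conclusions. Conclusion (a) is immediate from the construction. For the pentagon \eqref{Eq: xi sum}, both composites are sorting morphisms effecting the same rearrangement of the word $C^\alpha\tens C^\beta\tens C^\gamma$, so they agree by the coherence just invoked, while bilinearity of $H$ equates the two grading shifts. In \eqref{Eq: CF ij} the left triangle is nothing but the canonical isomorphism $\triv\tens\triv\isoto\triv$; for the right square I would expand $R_{C^\alpha}(C^\beta)$ into the elementary transpositions moving each factor of $C^\alpha$ past all of $C^\beta$, and compare $\xi_{\beta,\alpha}\circ R_{C^\alpha}(C^\beta)$ with $\xi_{\alpha,\beta}$: each pair consisting of one factor from $\alpha$ and one from $\beta$ then gets braided so that its net contribution is $R_{C_j}(C_i)\circ R_{C_i}(C_j)=g_{ij}\,\id$ (or $R_{C_i}(C_i)=\eta_{ii}\,\id$ when the two factors are the same), and collecting these by origin yields exactly the scalar $\eta(\alpha,\beta)=\prod_{i,j}\eta_{ij}^{a_i b_j}$ of \eqref{Eq: eta}, the grading shift $\phi(\alpha,\gL(\beta))$ matching by \eqref{Eq: dphi gL}.

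\textbf{The main obstacle.} The crux is the coherence statement for $\xi_{\alpha,\beta}$: one must show that the sorting morphism is independent of the sequence of elementary transpositions used and is compatible with the associativity constraints, which is precisely what forces \eqref{Eq: xi sum} to hold strictly. This is where the braider axioms \eqref{Eq: central obj} and the naturality of the $R_{C_i}$ must be used systematically; once it is in place, producing the scalar $\eta(\alpha,\beta)$ in \eqref{Eq: CF ij} is a routine bookkeeping of which pairs of factors are braided.
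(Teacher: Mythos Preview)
The paper does not prove this lemma; it is quoted verbatim from \cite[Lemma 2.3, Lemma 1.16]{KKOP21}, so there is no in-paper proof to compare against. Your overall strategy --- take an ordered tensor product for $C^\alpha$ and define $\xi_{\alpha,\beta}$ as the ``sorting morphism'' built from the elementary braiders $R_{C_i}(C_j)$ --- is exactly the intended construction and matches how the cited reference proceeds.

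Two points deserve tightening. First, the coherence step you flag as the main obstacle is genuinely the heart of the argument, and ``Mac Lane-type coherence'' is too vague here: you are not in a braided monoidal category, only working with braiders on the distinguished objects $C_i$. What you need concretely is that the braider axiom \eqref{Eq: central obj} together with \emph{naturality} of $R_{C_i}(-)$ yields the hexagon/Yang--Baxter relation among the $R_{C_i}(C_j)$'s, while bifunctoriality of $\tens$ gives commutation of distant transpositions; then Matsumoto's lemma for the symmetric group makes any reduced sequence of transpositions give the same morphism. You should spell this out rather than invoke coherence abstractly. Second, your treatment of the grading shift is loose: the $\xi_{\alpha,\beta}$ you build from sorting naturally lands in $q^{H_0(\alpha,\beta)}\tens C^{\alpha+\beta}$ for the specific bilinear form $H_0$ you wrote, and ``absorbing'' the symmetric difference $H-H_0$ into $C^\alpha$ requires redefining $C^\alpha$ as $q^{c(\alpha)}\tens(\text{ordered product})$ for a suitable quadratic function $c$; this is routine but should be stated, since the statement fixes $H$ in advance and asserts the isomorphism lands in $q^{H(\alpha,\beta)}\tens C^{\alpha+\beta}$.

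Your argument for part (i) is fine; the identity $g_{ij}=g_{ji}$ via $g_{ij}^2=g_{ij}g_{ji}$ is correct.
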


 Note that we have $\bce(\alpha,0) = \bce(0,\alpha)=1$, and
$
\bce(\alpha, \beta+\gamma) = \bce(\alpha, \beta)\cdot \bce(\alpha, \gamma)\ \  \text{ and } \ \  \bce(\alpha+ \beta, \gamma) = \bce(\alpha, \gamma)\cdot \bce(\beta, \gamma)
$
for $\alpha, \beta, \gamma \in \lG$.

\medskip

We define an order $\preceq$ on $ \lG$ by
$$
\alpha \preceq \beta  \quad \text{ for } \alpha, \beta \in \lG \text{ with }  \beta - \alpha \in \lG_{\ge0},
$$
and set
$$
\Ds_{\alpha_1, \ldots, \alpha_k} \seteq  \{ \delta \in \lG \mid \alpha_i + \delta \in \lG_{\ge0} \text{ for any }i =1, \ldots, k \}
$$
for $\alpha_1, \ldots, \alpha_k \in \lG$.

For $X \in \catT_\lambda$,  $Y \in \catT_\mu$  and $ \delta  \in \Ds_{\alpha, \beta}$,
we set
$$
\gHm_\delta( (X, \alpha  ), (Y, \beta  ) ) \seteq  \Hom_{\catT}(  C^{\delta + \alpha}\otimes X, q^{\gH(\delta, \beta-\alpha) + \dphi(\delta+\beta, \mu)} \otimes  Y \otimes C^{ \delta + \beta} ).
$$
For $ \delta, \delta' \in \Ds_{\alpha, \beta}$ with $\delta \preceq  \delta'$ and $ f \in \gHm_\delta( (X, \alpha  ), (Y, \beta  ) )$,
we define $\gzeta_{\delta', \delta}(f) \in \gHm_{\delta'}( (X, \alpha  ), (Y, \beta  ) )$  to be the morphism such that the following diagram commutes:
$$
\xymatrix@C=4em{
C^{\delta' - \delta} \otimes C^{ \delta + \alpha} \otimes X   \ar[dd]^{\bwr}_{  \xi_{\delta'-\delta,  \delta+\alpha} }  \ar[rr]^{ C^{\delta'-\delta} \otimes f \qquad \qquad }   &&
q^{\gH(\delta, \beta-\alpha) + \dphi(\delta+\beta, \mu)} \otimes   C^{\delta' - \delta} \otimes Y \otimes C^{ \delta+\beta}   \ar[d]^{  R_{C^{\delta'-\delta}} (Y) } \\
 &&  q^{\gH(\delta, \beta-\alpha) + \dphi(\delta'+\beta, \mu)} \otimes Y \otimes C^{\delta' - \delta} \otimes C^{ \delta + \beta}
  \ar[d]_{\bwr}^{\xi_{ \delta'-\delta,   \delta+\beta }}  \\
q^{\gH(\delta'-\delta,  \delta+\alpha)} \otimes C^{  \delta'+\alpha} \otimes X  \ar[rr]^{ q^{H(\delta'-\delta,\delta+\alpha)} \tens \gzeta_{\delta', \delta}(f) \qquad \qquad} &&
q^{\gH(\delta, \beta-\alpha) + \dphi(\delta'+\beta, \mu) + \gH(\delta'-\delta,  \delta+\beta) } \otimes Y \otimes C^{ \delta'+\beta}.
}
$$
Then,
$\gzeta_{\delta', \delta } $ is a map from $  \gHm_\delta( (X, \alpha  ), (Y, \beta  ) )$  to $ \gHm_{\delta'}( (X, \alpha  ), (Y, \beta  ) )$ and   $\zeta^\gr_{\delta'', \delta' }  \circ \zeta^\gr_{\delta', \delta } = \zeta^\gr_{\delta'', \delta } $
for $\delta \preceq \delta' \preceq \delta''$, so that $\{\zeta^\gr_{\delta', \delta} \}_{\delta,\delta' \in \Ds_{\al,\beta}}$ forms an inductive system indexed by $\Ds_{\al,\beta}$.

Hence we can define a new category $\lT$ as
\begin{align*}
\Ob (\lT) &\seteq  \Ob(\catT) \times \lG, \\
\Hom_{\lT}( (X, \alpha), (Y, \beta) ) &\seteq   \indlim[{  \substack{\delta \in \Ds_{\alpha, \beta}, \\  \lambda + \gL(\alpha) = \mu + \gL(\beta)  }    }]
\Hm^\gr_{\delta}( (X, \alpha  ), (Y, \beta  ) ) ,
\end{align*}
where $X \in \catT_{\lambda}$ and $Y \in \catT_{\mu}$.
For the composition of morphisms in $\lT$ and its associativity, see \cite[Section 2.2, Section 2.3]{KKOP21}.

 By the construction, we have the decomposition
$$
\lT = \bigoplus_{\mu \in \Lambda} \lT_{\mu}, \qquad \text{where }  \lT_{\mu} \seteq  \{ (X, \alpha) \mid X \in \catT_\lambda, \ \lambda + \gL(\alpha)=\mu  \}.
$$

The category $\lT$  is a monoidal category with the following  tensor product: For $\alpha, \alpha', \beta, \beta' \in \lG $, $X \in \catT_\lambda$, $X' \in \catT_{\lambda'}$, $Y \in \catT_\mu$ and $Y' \in \catT_{\mu'}$,
we define
$$
(X, \alpha) \otimes (Y, \beta) \seteq  ( q^{- \dphi(\beta, \lambda) + \gH(\alpha, \beta)} \otimes X \otimes Y, \alpha+\beta  ),
$$
and, for $f \in \gHm_\delta((X, \alpha), (X', \alpha'))$ and $g \in \gHm_\epsilon((Y, \beta), (Y', \beta'))$, we define
$$
\gT_{\delta, \epsilon}(f,g) \seteq  \eta(\epsilon, \alpha-\alpha') \gtT_{\delta, \epsilon}(f,g) ,
$$
where $ \gtT_{\delta, \epsilon}(f,g)$ is the morphism such that the following diagram commutes:
$$
\xymatrix{
C^{\delta+\alpha}  \otimes X \otimes C^{\epsilon+\beta}\otimes  Y  \ar[rr]^{f \otimes g} &&  q^{b}\otimes  X' \otimes C^{\delta+\alpha'}  \otimes  Y' \otimes  C^{\epsilon+\beta'}   \ar[d]^{R_{C^{\delta+\alpha'}} (Y') } \\
q^{-\dphi(\epsilon+\beta, \lambda) }\otimes C^{\delta+\alpha} \otimes C^{\epsilon+\beta}\otimes X \otimes Y  \ar[u]^{R_{C^{\epsilon+\beta}} (X) }  \ar[d]^\bwr_{\xi_{\delta+\alpha, \epsilon+\beta}}
&&  q^{c}\otimes  X' \otimes Y' \otimes C^{\delta+\alpha'} \otimes C^{\epsilon+\beta'}  \ar[d]_\bwr^{\xi_{\delta+\alpha', \epsilon+\beta'}} \\
q^{a}\otimes  C^{\delta+\epsilon+\alpha+\beta}\otimes X \otimes Y \ar[rr]^{\gtT_{\delta,\epsilon} (f, g)}  &&  q^{d}\otimes  X' \otimes Y' \otimes C^{\delta+\epsilon+\alpha'+\beta'},
}
$$
where
\begin{align*}
a &= -\dphi(\epsilon+\beta, \lambda) + \gH(\delta+\alpha, \epsilon+\beta), \\
b &=  \gH(\delta, \alpha' - \alpha) + \dphi(\delta+\alpha', \lambda') + \gH(\epsilon, \beta'-\beta) + \dphi(\epsilon+\beta', \mu'), \\
c &= b + \dphi(\delta + \alpha', \mu'), \qquad d = c + \gH(\delta + \alpha', \epsilon+\beta').
\end{align*}
Then we have
$$
\gT_{\delta, \epsilon}(f,g) \in \gHm_{\delta+\epsilon}((X, \alpha)\tens (Y,\beta), (X', \alpha') \otimes (Y', \beta') ).
$$
Then  the map $\gT_{\delta, \epsilon}$ is compatible with  the maps $\gzeta_{\delta,\delta'}$,  in the inductive system,  and moreover it yields a bifunctor $\tens$ on $\lT$ (\cite[Proposition 2.5]{KKOP21})
\begin{equation*} 
\begin{aligned} 
 \Hom_{\lT}(   (X, \alpha) , (X', \alpha') ) & \times  \Hom_{\lT}(  (Y, \beta), (Y', \beta') ) \buildrel \otimes \over  \longrightarrow \\
 & \qquad  \Hom_{\lT}(  (X, \alpha) \otimes (Y, \beta) , (X', \alpha') \otimes (Y', \beta') ).
\end{aligned}
\end{equation*}

For $(X,\alpha) \in  \lT$, define 
$R_{(q,0)}((X,\al)) \in \Hom_{\lT}((q\tens X,\al),(X\tens q,\al))$ as the image of $R_q(X) \in  \Hom_\catT(q\tens X, X\tens q)=\gHm_{-\alpha}( (q\tens X, \alpha ), (X \tens q, \alpha) )$. 
Then $((q,0), R_{(q,0)})$ is an invertible braider in $\lT$.

\begin{thm}  \label{Thm: graded localization}
Let $ \st{C_i=(C_i, R_{C_i}, \dphi_i )}_{i\in I} $ be a 
real commuting family of graded braiders  in $\catT$. Then the category
$\lT$ defined above becomes a monoidal category. 
There exists a monoidal functor 
$\Upsilon\cl \catT \to \lT$ 
and a real commuting family of graded braiders $\st{\tC_i=(\tC_i, R_{\tC_i}, \phi_i)}_{i\in I}$ in $\lT$ satisfy the following properties:

\bnum
\item
for $i\in I$, $\Upsilon(C_i) $ is isomorphic to $ \widetilde{C}_i$ and it is invertible in $(\lT)_{\,\mathrm{br}}$, 
\item 
for $i\in I$ and $X\in\catT_\la$, the diagram
$$
\xymatrix{
\Upsilon(C_i \otimes X)  \ar[r]^\sim  \ar[d]_{\Upsilon( R_{C_i} (X)  )\ms{10mu}}^-\bwr  & \widetilde{C}_i \otimes \Upsilon(X) \ar[d]_{ R_{\widetilde{C}_i} (\Upsilon(X)  )\ms{5mu}} ^-\bwr \\
\Upsilon(q^{\phi_i(\la)}\tens X \otimes  C_i )  \ar[r]^\sim & q^{\phi_i(\la)}\tens  \Upsilon(X)\otimes  \widetilde{C}_i
}
$$
commutes.
\setcounter{myc}{\value{enumi}}

\end{enumerate}

Moreover, the functor $\Upsilon$ satisfies the following universal property:
\bnum\setcounter{enumi}{\value{myc}}
\item  If there are another $\La$-graded monoidal category $\catT'$ 
with an invertible central object $q\in\catT'_0$
with and a $\La$-graded  monoidal functor $\Upsilon'\cl  \catT \rightarrow \catT'$ 
such that  
\bna
\item  $\Upsilon'$ sends the central object $q\in\catT_0$ to $q\in\catT'_0$, 
\item   \label{Eq: loc 1}
$\Upsilon'(C_i) $ is invertible in $\catT'$ for any $i\in I$ and
\item 
for any $i\in I$ and $X\in\catT$, $\Upsilon'(R_{C_i}(X))\cl
\Upsilon'(C_i\tens X)\to\Upsilon'(q^{\phi_i(\la)}\tens X\tens C_i)$ is an isomorphism,
\end{enumerate}
then there exists a monoidal functor $\mathcal F$, which is unique up to a unique isomorphism,  such that
the diagram
$$
\xymatrix{
\catT \ar[r]^{\Upsilon} \ar[dr]_{\Upsilon'}  & \lT \ar@{.>}[d]^{\mathcal F }\\
& \catT'
}
$$
commutes.
\end{enumerate}

\end{thm}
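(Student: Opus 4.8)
The plan is to build up $\lT$'s monoidal structure, then the functor $\Upsilon$ together with the family $\{\widetilde{C}_i\}_{i\in I}$, and finally the universal property, using throughout the inductive system $\{\gzeta_{\delta',\delta}\}$ and the coherence data $(C^{\alpha},\xi_{\alpha,\beta})$ from the preceding Lemma as the main bookkeeping devices. That $\lT$ is a well-defined $\bR$-linear category, with composition induced on the inductive limits, and that $\tens$ is a bifunctor on $\lT$, is already recorded in \cite[\S2.2--2.3, Proposition 2.5]{KKOP21}, so I would invoke it directly. It remains to supply the unit, the associator, and the coherence axioms. For the unit one takes $\one_{\lT}\seteq(\one,0)=\Upsilon(\one)$; from the formula $(X,\alpha)\tens(Y,\beta)=(q^{-\phi(\beta,\lambda)+H(\alpha,\beta)}\tens X\tens Y,\ \alpha+\beta)$ and the normalizations $H(\alpha,0)=H(0,\alpha)=0$, $\phi(0,-)=0$, one sees that $-\tens\one_{\lT}$ and $\one_{\lT}\tens-$ are canonically isomorphic to the identity functor, so axiom (2) of a monoidal category is automatic and the unit constraint descends from $\catT$. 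For the associator, a short computation using only the bilinearity of $H$ and $\phi$ shows that the underlying objects of $((X,\alpha)\tens(Y,\beta))\tens(Z,\gamma)$ and $(X,\alpha)\tens((Y,\beta)\tens(Z,\gamma))$ literally coincide, so I would let the associativity isomorphism of $\lT$ be induced by that of $\catT$ on the factor $X\tens Y\tens Z$; the pentagon and triangle identities for $\lT$ then reduce to those of $\catT$ together with the coherence of the $\xi_{\alpha,\beta}$, i.e.\ diagrams \eqref{Eq: xi sum} and \eqref{Eq: CF ij}.

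Next I would set $\Upsilon(X)=(X,0)$ on objects and, on a morphism $f$, let $\Upsilon(f)$ be the class of $f\in\gHm_0((X,0),(Y,0))=\Hom_{\catT}(X,Y)$ in $\Hom_{\lT}((X,0),(Y,0))$; since $\Upsilon(X)\tens\Upsilon(Y)=(X\tens Y,0)$, the monoidal constraint $\xi_{\Upsilon}$ is the canonical one, and functoriality and compatibility with the associators are immediate. For the distinguished family I would take $\widetilde{C}_i\seteq(\one,e_i)$, with group homomorphism $\phi_i$ and braider $R_{\widetilde{C}_i}((X,\alpha))$ induced from $R_{C_i}(X)$ under the identification $C^{e_i}=C_i$; that $(\widetilde{C}_i,R_{\widetilde{C}_i},\phi_i)$ is a graded braider and that $\{\widetilde{C}_i\}_{i\in I}$ is again a real commuting family, with the same structure scalars $\eta_{ij}$, is a line-by-line transcription of the hypotheses on $\{C_i\}_{i\in I}$ via the Lemma. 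The isomorphism $\Upsilon(C_i)\isoto\widetilde{C}_i$ is the class, at $\delta=0$, of $\id_{C_i}\in\gHm_0((C_i,0),(\one,e_i))=\Hom_{\catT}(C_i,C_i)$; $\widetilde{C}_i$ is invertible in $(\lT)_{\mathrm{br}}$ because $(\one,e_i)\tens(\one,-e_i)$ is isomorphic, via $\xi$, to a power of the invertible braider $(q,0)$; and the diagram in (ii) commutes by the very definition of $R_{\widetilde{C}_i}$ (it restricts to $R_{C_i}$ along $\Upsilon$), so this step is just unwinding definitions.

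The substance of the proof is the universal property (iii), and its heart is a descent argument. Given $(\catT',\Upsilon')$ satisfying (a)--(c), I would put $\mathcal F(X,\alpha)\seteq\Upsilon'(X)\tens\Upsilon'(C^{\alpha})$ for $\alpha\in\lG_{\ge0}$ and extend to arbitrary $\alpha\in\lG$ using that each $\Upsilon'(C_i)$ is invertible, with the identifications furnished by $\Upsilon'(\xi_{\alpha,\beta})$; on a morphism represented by some $f\colon C^{\delta+\alpha}\tens X\to q^{m}\tens Y\tens C^{\delta+\beta}$ I would declare $\mathcal F$ of its class to be $\Upsilon'(f)$, transported into $\Hom_{\catT'}(\mathcal F(X,\alpha),q^{m}\tens\mathcal F(Y,\beta))$ by tensoring with the inverses of $\Upsilon'(C^{\delta+\alpha})$, $\Upsilon'(C^{\delta+\beta})$ and the relevant instances of $\xi$, using $\Upsilon'(q)=q$. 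The main obstacle is to check independence of the representative $\delta$: enlarging $\delta$ to $\delta'$ replaces $f$ by $\gzeta_{\delta',\delta}(f)$, whose definition involves precomposition with $C^{\delta'-\delta}\tens(-)$ and with the $R$-matrix $R_{C^{\delta'-\delta}}(Y)$, and these become invertible in $\catT'$ precisely because hypothesis (c) makes each $\Upsilon'(R_{C_i}(-))$, hence $\Upsilon'(R_{C^{\delta'-\delta}}(-))$, an isomorphism — so the two recipes agree in $\catT'$, whereas without (c) the construction would not descend to the inductive limit. Once well-definedness is secured, functoriality is clear; the monoidal constraint $\xi_{\mathcal F}$ is assembled from $\xi_{\Upsilon'}$ and $\Upsilon'(\xi_{\alpha,\beta})$ with coherence again reducing to \eqref{Eq: xi sum}; the triangle $\mathcal F\circ\Upsilon\simeq\Upsilon'$ holds since $\mathcal F(X,0)=\Upsilon'(X)$; and uniqueness follows because any monoidal $\mathcal G$ with $\mathcal G\circ\Upsilon\simeq\Upsilon'$ must carry $(X,\alpha)\simeq\Upsilon(X)\tens\bigotimes_i\widetilde{C}_i^{\tens a_i}$ to $\Upsilon'(X)\tens\bigotimes_i\Upsilon'(C_i)^{\tens a_i}$ and is thereby pinned down on morphisms by (i) and (ii). The only residual work is the bookkeeping of the $q$-exponents and the scalars $\eta(\alpha,\beta)$ through the diagrams defining $\tens$ on $\lT$, which is routine given \eqref{Eq: dphi gL} and the multiplicativity $\eta(\alpha,\beta+\gamma)=\eta(\alpha,\beta)\eta(\alpha,\gamma)$.
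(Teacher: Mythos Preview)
The paper does not supply its own proof of this theorem: it is stated in the preliminary Section~\ref{Sec: RCB} as a result recalled from \cite{KKOP21}, with the explicit remark ``We refer the reader to loc.\ cit.\ for more details.'' Your sketch is consistent with the construction carried out there---indeed you already cite \cite[\S2.2--2.3, Proposition~2.5]{KKOP21} for the category structure and the bifunctor $\tens$, and the remaining pieces (unit, associator, the functor $\Upsilon$, the braiders $\widetilde{C}_i$, and the universal property via descent along the inductive system) are exactly the content of \cite[\S2.3--2.4, Theorem~2.7, Proposition~2.9]{KKOP21}. So there is nothing to compare against in the present paper; your outline matches the approach of the source reference and would serve as a correct summary of that argument.
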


We denote by $\catT [ C_i^{\otimes -1} \mid i\in I]$ the localization $\lT$ in Theorem \ref{Thm: graded localization}.
Note that
$$
(X, \alpha+\beta) \simeq  q^{-\gH(\beta, \alpha)} \otimes (C^\alpha \otimes X, \beta), \quad
(\triv, \beta) \otimes  (\triv, -\beta) \simeq q^{-\gH(\beta, \beta)} (\triv, 0)
$$
for $\alpha \in \lG_{\ge 0}$ and  $\beta \in \lG$.

\begin{prop}
Let $( C_i ,  R_{C_i}  ,   \dphi_i  )_{i\in I}$ be a real commuting family of graded braiders in a graded monoidal category $\catT$,
and  set $\lT \seteq   \catT  [ C_i^{\otimes -1} \mid i \in I ]$.
Assume that
\bna
\item $\catT$ is an abelian category,
\item $\otimes$ is exact.
\end{enumerate}
Then $\lT$ is an abelian category with exact $\tens$,
and the functor $\Upsilon\cl  \catT \rightarrow \lT $ is exact.
\end{prop}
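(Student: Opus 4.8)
The plan is to derive the abelian structure on $\lT$, the exactness of $\tens$, and the exactness of $\Upsilon$ all from the corresponding facts on $\catT$, via the localization functor $\Upsilon\cl\catT\to\lT$ and the fact that $\Hom$-groups in $\lT$ are \emph{filtered} colimits of $\Hom$-groups in $\catT$ (so that exactness of filtered colimits of abelian groups is available). First, $\lT$ is additive: by construction each $\Hom_{\lT}((X,\alpha),(Y,\beta))$ is a filtered colimit of the abelian groups $\gHm_\delta((X,\alpha),(Y,\beta))=\Hom_\catT\bl C^{\delta+\alpha}\tens X,\ q^{\bullet}\tens Y\tens C^{\delta+\beta}\br$ and composition is bilinear; for finite direct sums I would use the isomorphisms $(X,\alpha+\beta)\simeq q^{-\gH(\beta,\alpha)}\tens(C^\alpha\tens X,\beta)$ ($\alpha\in\lG_{\ge0}$, $\beta\in\lG$) and invertibility of $q$ to bring two given objects to a common $\lG$-component $(X',\gamma)$, $(Y',\gamma)$, whose biproduct is then $(X'\oplus Y',\gamma)$ — which one verifies on $\Hom$-groups using additivity of $\tens$ on $\catT$ (a consequence of (b)) and the commutation of filtered colimits with finite direct sums.

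The crucial step is that $\Upsilon$ preserves kernels and cokernels. Let $f\cl A\to B$ in $\catT$ with cokernel $p\cl B\epito Q$; I claim $\Upsilon(p)$ is a cokernel of $\Upsilon(f)$, i.e. that $\Hom_{\lT}(\Upsilon Q,T)\isoto\ker\bl\Hom_{\lT}(\Upsilon B,T)\xrightarrow{\,-\circ\Upsilon f\,}\Hom_{\lT}(\Upsilon A,T)\br$ for every $T\in\lT$. Every object of $\lT$ is isomorphic to $q^{n}\tens\Upsilon(Z)\tens\widehat C^{\gamma}$ for some $Z\in\catT$, $n\in\Z$, $\gamma\in\lG$, where $\widehat C^{\gamma}\seteq(\one,\gamma)$ is invertible; writing $\gamma=\gamma_+-\gamma_-$ with $\gamma_\pm\in\lG_{\ge0}$ and using $\widehat C^{\gamma_\pm}\simeq\Upsilon(C^{\gamma_\pm})$ together with exactness of $(-)\tens C^{\gamma_-}$ on $\catT$, one reduces to the case $T=\Upsilon(Z)$ (replacing $A,B,Q$ by $A\tens C^{\gamma_-}$, etc., which is harmless since $Q\tens C^{\gamma_-}=\operatorname{coker}(f\tens C^{\gamma_-})$, and $Z$ by $Z\tens C^{\gamma_+}$). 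Now $\Hom_{\lT}(\Upsilon A,\Upsilon Z)=\varinjlim_{\delta\in\lG_{\ge0}}\Hom_\catT\bl C^\delta\tens A,\ q^{\dphi(\delta,\mu)}\tens Z\tens C^\delta\br$ (this being $0$ unless $A$ and $Z$ lie in the same graded component), with $-\circ\Upsilon f$ induced level-wise by $-\circ(C^\delta\tens f)$; since $C^\delta\tens(-)$ is exact on $\catT$, $C^\delta\tens p$ is a cokernel of $C^\delta\tens f$, whence $\Hom_\catT(C^\delta Q,W)=\ker\bl\Hom_\catT(C^\delta B,W)\to\Hom_\catT(C^\delta A,W)\br$ for every $W$, and passing to the filtered colimit over $\delta$ yields the claim. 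Kernels are treated symmetrically, using exactness of $(-)\tens C^\delta$ and that $\Hom$ into a kernel is a kernel of the induced $\Hom$'s.

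Next I would show that every morphism of $\lT$ is, up to composition with isomorphisms and tensoring by an invertible object, of the form $\Upsilon(h)$ for some morphism $h$ of $\catT$. Indeed, if $f\cl M\to N$ is represented at level $\delta$ by $h\in\Hom_\catT(C^{\delta+\alpha}\tens X,\ q^{\bullet}\tens Y\tens C^{\delta+\beta})$ (so $M=(X,\alpha)$, $N=(Y,\beta)$), then — unwinding the explicit formulas for $\tens$, the associativity constraint, and the transition maps $\gzeta$ on $\lT$ from \cite[Section~2]{KKOP21}, and using $\Upsilon(C^{\delta+\alpha})\simeq\widehat C^{\delta+\alpha}$ and $\Upsilon(C^{\delta+\beta})\simeq\widehat C^{\delta+\beta}$ — one checks that after tensoring with invertibles and applying the autoequivalence $q^{n}\tens(-)$, $f$ becomes $\Upsilon(C^{\delta+\alpha})^{\tens -1}\tens\Upsilon(h)$ up to a grading shift. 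Since tensoring by an invertible object is an exact autoequivalence, this together with the previous step shows that $\ker f$ and $\operatorname{coker}f$ exist in $\lT$; and the last abelian axiom — that the canonical morphism $\operatorname{coim}f\to\operatorname{im}f$ is an isomorphism — follows by transporting, through $\Upsilon$ and this reduction, the corresponding isomorphism in $\catT$ (applying the previous step also to $\ker h\hookrightarrow A$ and $B\epito\operatorname{coker}h$). Hence $\lT$ is abelian; $\Upsilon$ is exact by the previous step; and $\tens$ on $\lT$ is exact, since on fixing a variable $\simeq q^{n}\tens\Upsilon(A)\tens\widehat C^{\alpha}$ the functors $q^{n}\tens(-)$ and $\widehat C^{\alpha}\tens(-)$ are equivalences, leaving $\Upsilon(A)\tens(-)$, and the reduction carries a short exact sequence of $\lT$ to a twist of the $\Upsilon$-image of one in $\catT$, to which $A\tens(-)$ is exact by hypothesis (b).

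The main obstacle I anticipate is the bookkeeping in this last reduction: checking, against the exponent-heavy definitions of $\tens$ and $\gzeta$ in \cite{KKOP21}, that a class represented at level $\delta$ really does coincide — after the stated tensorings and grading shifts — with $\Upsilon(h)$. No new idea is needed there; the conceptual content lies entirely in the second step, which trades exactness statements in $\lT$ for exactness of $\tens$ in $\catT$ via the filtered-colimit presentation of morphism spaces. But the manipulation of the exponents $\gH(\cdot,\cdot)$, $\dphi(\cdot,\cdot)$ and of the $q$-shifts has to be carried out with care.
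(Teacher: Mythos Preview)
The paper does not prove this proposition; it appears in the preliminaries as a fact recalled from \cite{KKOP21}, so there is no proof here to compare against. Your outline is the standard argument for showing that such a localization inherits the abelian structure, and it is essentially correct.

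One imprecision in the last step is worth flagging. You assert that a short exact sequence in $\lT$ reduces, after twisting, to the $\Upsilon$-image of a short exact sequence in $\catT$. But $\Upsilon$ can kill objects (indeed, much of the present paper is about \emph{which} simples it kills; see Proposition~\ref{prop:kernel}): if the epimorphism $Y\to Z$ in $\lT$ reduces to $\Upsilon(h)$ for some $h\cl Y'\to Z'$ in $\catT$, all you obtain is $\Upsilon(\operatorname{coker}h)\simeq 0$, not $\operatorname{coker}h=0$ in $\catT$. The repair is immediate and uses that $\Upsilon$ is monoidal: tensor the exact sequence $0\to\ker h\to Y'\to Z'\to\operatorname{coker}h\to 0$ by $A$ in $\catT$ and apply $\Upsilon$; since $\Upsilon(A\tens\operatorname{coker}h)\simeq\Upsilon(A)\tens\Upsilon(\operatorname{coker}h)\simeq 0$, the resulting sequence in $\lT$ is short exact. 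With this adjustment your argument for the exactness of $\tens$ on $\lT$ goes through.
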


\subsection{Quiver Hecke algebras} \
\subsubsection{Cartan data} \

Let $I$ be an index set.
A {\it Cartan datum} $ \bl\cmA,\wlP,\Pi,\Pi^\vee,(\cdot,\cdot) \br $
consists of
\begin{enumerate}[{\rm (i)}]
\item a free abelian group $\wlP$, called the {\em weight lattice},
\item $\Pi = \{ \alpha_i \mid i\in I \} \subset \wlP$,
called the set of {\em simple roots},
\item $\Pi^{\vee} = \{ h_i \mid i\in I \} \subset \wlP^{\vee}\seteq
\Hom( \wlP, \Z )$, called the set of {\em simple coroots},
\item a $\Q$-valued symmetric bilinear form $(\cdot,\cdot)$ on $\wlP$,
\end{enumerate}
which satisfy
\begin{enumerate} [{\rm (a)}]
\item  $(\alpha_i,\alpha_i)\in 2\Z_{>0}$ for $i\in I$,
\item $\langle h_i, \lambda \rangle =\dfrac{2(\alpha_i,\lambda)}{(\alpha_i,\alpha_i)}$ for $i\in I$ and $\lambda \in \Po$,
\item $\cmA \seteq (\langle h_i,\alpha_j\rangle)_{i,j\in I}$ is
a {\em generalized Cartan matrix}, i.e.,
$\langle h_i,\alpha_i\rangle=2$ for any $i\in I$ and
$\langle h_i,\alpha_j\rangle \in\Z_{\le0}$ if $i\not=j$,
\item $\Pi$ is a linearly independent set,
\item for each $i\in I$, there exists $\Lambda_i \in \wlP$
such that $\langle h_j, \Lambda_i \rangle = \delta_{ij}$ for any $j\in I$.
\end{enumerate}
Let $\Delta$ (resp.\ $\prD$, $\nrD$) be the set of roots  (resp.\ positive roots, negative roots).
We set $\wlP_+ \seteq  \{ \lambda \in \wlP \mid  \langle h_i, \lambda\rangle \ge 0  \text{ for }i\in I\}$,
$ \rlQ = \bigoplus_{i \in I} \Z \alpha_i$, and $ \rlQ_+ = \sum_{i\in I} \Z_{\ge 0} \alpha_i$,
and write $\Ht (\beta)=\sum_{i \in I} k_i$  for $\beta=\sum_{i \in I} k_i \alpha_i \in \rlQ_+$.
For $i\in I$, we define
$$s_i(\lambda)=\lambda-\langle h_i, \lambda\rangle\alpha_i \qquad
\text{for $\lambda\in \wlP$},$$
and $\weyl$ is the subgroup of $\mathrm{Aut}(\wlP)$ generated by
$\{s_i\}_{i\in I}$.

For $w,v \in \weyl$, we write $w \bge v$
if there exists a reduced expression of $v$ which
appears in a subexpression of a reduced expression of $w$ (the Bruhat order on $\weyl$).

For $w \in \weyl$, we  say that an element $\la \in \wlP$ is \emph{$w$-dominant} if 
\eq
\text{$(\beta,\la)\ge0$ for any $\beta\in\prD\cap w^{-1}\nrD$.}
\label{cond:dominant}
\eneq
This condition is equivalent to
$$\text{$\lan h_{i_k}, s_{i_{k+1}} \cdots s_{i_r} \la \ran \ge 0 $
for $1 \le  k \le r$,}$$
where $w=s_{i_1}\cdots s_{i_r}$ is a reduced expression of $w$.

Note that any $\la \in \wlP_+$ is $w$-dominant for any $w \in \weyl$.

\subsubsection{Quiver Hecke algebras}\

Let $\bR$ be a field.
 For $i,j\in I$, we choose polynomials
$\qQ_{i,j}(u,v) \in \bR[u,v]$ such that
\bna

\item $\qQ_{i,j}(u,v) = \qQ_{j,i}(v,u)$,

\item it is of the form
\begin{align*}
\qQ_{i,j}(u,v) =\bc
                   \sum\limits
_{p(\alpha_i , \alpha_i) + q(\alpha_j , \alpha_j) = -2(\alpha_i , \alpha_j) } t_{i,j;p,q} u^pv^q &
\text{if $i \ne j$,}\\[3ex]
0 & \text{if $i=j$,}
\ec
\end{align*}
where  $t_{i,j;-a_{ij},0} \in  \bR^{\times}$.
\end{enumerate}
For $\beta\in \rlQ_+$ with $ \Ht(\beta)=n$, we set
$$
I^\beta\seteq  \Bigl\{\nu=(\nu_1, \ldots, \nu_n ) \in I^n \bigm| \sum_{k=1}^n\alpha_{\nu_k} = \beta \Bigr\},
$$
on which the symmetric group $\mathfrak{S}_n = \langle s_k \mid k=1, \ldots, n-1 \rangle$ acts  by place permutations.

\begin{df}
\ For $\beta\in\rlQ_+$,
the {\em quiver Hecke algebra} $R(\beta)$ associated with $\cmA$ and $(\qQ_{i,j}(u,v))_{i,j\in I}$
is the $\bR$-algebra generated by
$$
\{e(\nu) \mid \nu \in I^\beta \}, \; \{x_k \mid 1 \le k \le n \},
 \; \{\tau_l \mid 1 \le l \le n-1 \}
$$
satisfying the following defining relations:
\begin{align*}
& e(\nu) e(\nu') = \delta_{\nu,\nu'} e(\nu),\ \sum_{\nu \in I^{\beta}} e(\nu)=1,\
x_k e(\nu) =  e(\nu) x_k, \  x_k x_l = x_l x_k,\\
& \tau_l e(\nu) = e(s_l(\nu)) \tau_l,\  \tau_k \tau_l = \tau_l \tau_k \text{ if } |k - l| > 1, \\[5pt]
&  \tau_k^2 e(\nu) = \qQ_{\nu_k, \nu_{k+1}}(x_k, x_{k+1}) e(\nu), \\[5pt]
&  (\tau_k x_l - x_{s_k(l)} \tau_k ) e(\nu) = \left\{
                                                           \begin{array}{ll}
                                                             -  e(\nu) & \hbox{if } l=k \text{ and } \nu_k = \nu_{k+1}, \\
                                                               e(\nu) & \hbox{if } l = k+1 \text{ and } \nu_k = \nu_{k+1},  \\
                                                             0 & \hbox{otherwise,}
                                                           \end{array}
                                                         \right. \\[5pt]
&( \tau_{k+1} \tau_{k} \tau_{k+1} - \tau_{k} \tau_{k+1} \tau_{k} )  e(\nu) \\[4pt]
&\qquad \qquad \qquad = \left\{
                                                                                   \begin{array}{ll}
\bQ_{\,\nu_k,\nu_{k+1}}(x_k,x_{k+1},x_{k+2}) e(\nu) & \hbox{if } \nu_k = \nu_{k+2}, \\
0 & \hbox{otherwise}, \end{array}
\right.\\[5pt]
\end{align*}
\end{df}
where
\begin{align*}
\bQ_{i,j}(u,v,w)\seteq\dfrac{ \qQ_{i,j}(u,v)- \qQ_{i,j}(w,v)}{u-w}\in \bR[u,v,w].
\end{align*}
The algebra $R(\beta)$ has the $\Z$-grading defined by
\begin{align*}
\deg(e(\nu))=0, \quad \deg(x_k e(\nu))= ( \alpha_{\nu_k} ,\alpha_{\nu_k}), \quad  \deg(\tau_l e(\nu))= -(\alpha_{\nu_{l}} , \alpha_{\nu_{l+1}}).
\end{align*}

We denote by $R(\beta) \Mod$  the category of graded $R(\beta)$-modules with degree preserving homomorphisms.
We  write $R(\beta)\gmod$ for the full subcategory of $R(\beta)\Mod$ consisting of the graded modules which are  finite-dimensional over $\bR $, and 
 $R(\beta)\proj$ for  the full subcategory of $R(\beta)\Mod$ consisting of finitely generated  projective graded $R(\beta)$-modules.
We set $R\Mod \seteq \bigoplus_{\beta \in \rlQ_+} R(\beta)\Mod$, $R\proj \seteq \bigoplus_{\beta \in \rlQ_+} R(\beta)\proj$, and  $R\gmod \seteq \bigoplus_{\beta \in \rlQ_+} R(\beta)\gmod$.
The trivial $R(0)$-module of degree 0 is denoted by $\trivialM$.
For simplicity, we write ``a module" instead of ``a graded module''.
We define the grading shift functor $q$
by $(qM)_k = M_{k-1}$ for a graded module $M = \bigoplus_{k \in \Z} M_k $.
For $M, N \in R(\beta)\Mod $, $\Hom_{R(\beta)}(M,N)$ denotes the space of degree preserving module homomorphisms.
We define
\[
\HOM_{R(\beta)}( M,N ) \seteq \bigoplus_{k \in \Z} \Hom_{R(\beta)}(q^{k}M, N),
\]
and set $ \deg(f) \seteq k$ for $f \in \Hom_{R(\beta)}(q^{k}M, N)$.
When $M=N$, we write $\END_{R(\beta)}( M ) = \HOM_{R(\beta)}( M,M)$.
We sometimes write $R$ for $R(\beta)$ in $\HOM_{R(\beta)}( M,N )$ for simplicity.

For $M \in R(\beta)\gmod$, we set $M^\star \seteq  \HOM_{\bR}(M, \bR)$ with the $R(\beta)$-action given by
$$
(r \cdot f) (u) \seteq  f(\rho(r)u), \quad \text{for  $f\in M^\star$, $r \in R(\beta)$ and $u\in M$,}
$$
where $\rho$ is the antiautomorphism of $R(\beta)$ which fixes the generators
 $e(\nu)$, $x_k$, $\tau_k$. 
We say that $M$ is \emph{self-dual} if $M \simeq M^\star$ in $R\gmod$.

 For $\beta,\beta'\in\prtl$, set
$
e(\beta, \beta') \seteq \sum_{\nu \in I^\beta, \nu' \in I^{\beta'}} e(\nu\ast\nu'),
$
where $\nu\ast\nu'$ is the concatenation of $\nu$ and $\nu'$. 
Then there is an injective ring homomorphism
$$R(\beta)\tens R(\beta')\to e(\beta,\beta')R(\beta+\beta')e(\beta,\beta')$$
given by
$e(\nu)\tens e(\nu')\mapsto e(\nu,\nu')$,
$x_ke(\beta)\tens 1\mapsto x_ke(\beta,\beta')$,
$1\tens x_ke(\beta')\mapsto x_{k+\height{\beta}}e(\beta,\beta')$,
$\tau_ke(\beta)\tens 1\mapsto \tau_ke(\beta,\beta')$ and
$1\tens \tau_ke(\beta')\mapsto \tau_{k+\height{\beta}}e(\beta,\beta')$.
For $a\in R(\beta)$ and $a'\in R(\beta')$, the image of $a\tens a'$ is sometimes denoted by $a\etens a'$.

For $M \in R(\beta)\Mod$ and $N \in R(\beta')\Mod$, we set
$$
M \conv N \seteq R(\beta+\beta') e(\beta, \beta') \otimes_{R(\beta) \otimes R(\beta')} (M \otimes N).
$$
For $u\in M$ and $v\in N$, the image of
$u\tens v$ by the map $M\tens N\to M\conv N$ is sometimes denoted by
$u\etens v$. 
We also write $M\etens N\subset M\conv N$ for the image of $M\tens N$ in $M\conv N$.

For $\al,\beta\in\prtl$, let $X$ be an $R(\al+\beta)$-module.
Then $e(\al,\beta)X$ is an $R(\al)\tens R(\beta)$-module.
We denote it
by $$\Res_{\al,\beta}X.$$
We have
\eq \label{eq:adjoints}
&&\ba{rl}
\Hom_{R(\al)\tens R(\beta)}\bl M\tens N,\Res_{\al,\beta}(X)\br
&\simeq\Hom_{R(\al+\beta)}(M\conv N,X),\\
\Hom_{R(\al)\tens R(\beta)}\bl\Res_{\al,\beta}(X), M\tens N\br
&\simeq\Hom_{R(\al+\beta)}(X, q^{(\al,\beta)}N\conv M)\ea
\eneq
for any $R(\al)$-module $M$, any $R(\beta)$-module $N$ and
any $R(\al+\beta)$-module $X$.

\medskip
We denote by $M \hconv N$ the head of $M \conv N$ and by $M \sconv N$ the socle of $M \conv N$.
We say that simple $R$-modules $M$ and $N$ \emph{strongly commute} if $M \conv N$ is simple.  A simple $R$-module
$L$ is \emph{real} if $L$ strongly commutes with itself. 
Note that if $M$ and $N$ strongly commute,  then $M$ and $N$ commute, i.e., $M\conv N \simeq N \conv M$ up to a grading shift.

For $i\in I$ and the functors $E_i$ and $F_i$ are defined by
\eqn&&
\ba{rl}E_i(M)& = e(\alpha_i, \beta-\al_i) M \in  R(\beta-\al_i) \Mod\\
 F_i(M) &= R(\alpha_i) \conv M \in  R(\beta+\al_i) \Mod 
\ea\hs{5ex}\qt{for an $R(\beta)$-module $M$.}
\eneqn
For $i\in I $ and $n\in \Z_{>0}$, let $L(i)$ be the simple $R(\alpha_i)$-module concentrated on  degree 0 and
 $P(i^{n})$  the indecomposable  projective $R(n \alpha_i)$-module
whose head is isomorphic to $L(i^n) \seteq q_i^{\frac{n(n-1)}{2}} L(i)^{\conv n}$,
where $q_i\seteq q^{(\al_i,\al_i)/2}$. Then, for $M\in  R(\beta) \Mod $, we define
\eqn&&
\ba{ll}
E_i^{(n)} M& \seteq \HOM_{R(n\alpha_i)} \bl P(i^{n}),\, e(n\alpha_i, \beta - n\alpha_i) M\br \in  R(\beta-n\al_i) \Mod,\\
F_i^{(n)} M& \seteq  P(i^{n}) \conv M \in  R(\beta+n\al_i) \Mod. 
\ea
\eneqn
For $i \in I$ and a non-zero  $M \in R(\beta)\Mod$, we define
\begin{align*}
\wt(M) = - \beta, \ \   \ep_i(M) = \max \{ k \ge 0 \mid E_i^k M \not\simeq 0 \}, \ \  \ph_i(M) = \ep_i(M) + \langle h_i, \wt(M) \rangle.
\end{align*}
For a simple module $M$, we set
$$E^{\max}_i (M)\seteq E_i^{(\eps_i(M))}M.$$
We can also define $E_i^*$, $F_i^*$, $\ep^*_i$, etc.\ in the same manner as above if we replace  $e(\alpha_i, \beta-\al_i)$, $R(\alpha_i)\conv -$,
etc.\  with 
$e(\beta-\al_i, \alpha_i)$, $- \conv R(\alpha_i)$, etc.

We denote by $K(R\proj)$ and $K(R\gmod)$ the Grothendieck groups of $R\proj$ and $R\gmod$, respectively.

The following proposition will be  used frequently.

\begin{prop}[Shuffle lemma,  { \cite[Proposition 2.7]{McNamara15},  \cite[Theorem 4.3]{KM17}}]
\label{prop: shuffle lemma}
Let $\{\beta_j\}_{1\leqslant j \leqslant r}$ and 
 $\{\gamma_k\}_{1\le k \leqslant s}$ be two families of elements in $\rl_+$ such that 
$\sum_{j=1}^r\beta_j=\sum_{k=1}^s \gamma_k$.
Let $M_j$ be an $R(\beta_j)$-module for each $1 \leqslant j \leqslant r$.
Then $e(\gamma_1,\ldots,\gamma_s)(M_1 \conv \cdots \conv M_r)$ has a filtration of $ \bigotimes_{k=1}^s R(\gamma_k)$-modules 
whose graduations are isomorphic  to the modules of the form 
$$q^N\left(   \bigotimes_{k=1}^s R(\gamma_k)
 e(\beta_{1,k},\ldots,\beta_{r,k}) \right)  \bigotimes_{\bigotimes_{j,k} R(\beta_{j,k})}
\left( \bigotimes_{j=1}^r e(\beta_{j,1}, \ldots \beta_{j,s }) M_j \right).
$$
Here \be[$\bullet$]
\item $\{\beta_{j,k} \}_{1\leqslant j\le r, \ 1\leqslant k\le s}$ is a family of elements in $\rl_+$ such that $\beta_j=\sum_{k=1}^s \beta_{j,k}$
and $\gamma_k=\sum_{j=1}^r \beta_{j,k}$,
\item
 the right action of $\bigotimes_{j,k}R(\beta_{j,k})$ on $\bigotimes_{k=1}^s R(\gamma_k) e(\beta_{1,k},\ldots,\beta_{r,k}) $ is induced by the action of $R(\beta_{j,k})$ on $R(\gamma_k) e(\beta_{1,k},\ldots,\beta_{r,k})$,
\item the left action of $\bigotimes_{j,k}R(\beta_{j,k})$ on $ \bigotimes_{j=1}^re(\beta_{j,1}, \ldots \beta_{j,s}) M_j $ is induced by the left action of $R(\beta_{j,k})$ on $e(\beta_{j,1}, \ldots \beta_{j,s}) M_j $,
\item $N=-\hs{-2ex}\displaystyle\sum\limits_{1\le j<j'\le r,\;1\le k'<k\le s}(\beta_{j,k},\beta_{j',k'})$. 
\ee
\end{prop}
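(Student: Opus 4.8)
The statement is the Mackey-type filtration for quiver Hecke algebras, and the plan is to deduce it from a bimodule analysis of
\[
B\seteq e(\gamma_1,\ldots,\gamma_s)\,R(\beta)\,e(\beta_1,\ldots,\beta_r),\qquad
\beta\seteq\textstyle\sum_{j=1}^r\beta_j=\sum_{k=1}^s\gamma_k,\quad n\seteq\Ht(\beta),
\]
viewed as a $\bigl(\bigotimes_{k=1}^sR(\gamma_k)\bigr)$-$\bigl(\bigotimes_{j=1}^rR(\beta_j)\bigr)$-bimodule. By the definition of $\conv$ there is a bimodule isomorphism $e(\gamma_1,\ldots,\gamma_s)(M_1\conv\cdots\conv M_r)\simeq B\otimes_{\bigotimes_jR(\beta_j)}(M_1\otimes\cdots\otimes M_r)$, and $\bigl(\bigotimes_j e(\beta_{j,1},\ldots,\beta_{j,s})R(\beta_j)\bigr)\otimes_{\bigotimes_jR(\beta_j)}(\bigotimes_jM_j)\simeq\bigotimes_j e(\beta_{j,1},\ldots,\beta_{j,s})M_j$. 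So once the bimodule $B$ is filtered as claimed, applying the exact functor $-\otimes_{\bigotimes_jR(\beta_j)}(M_1\otimes\cdots\otimes M_r)$ finishes the proof, and I would thus reduce everything to producing a filtration of $B$.

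Next I would fix a PBW-type basis of $R(\beta)$: choosing for every $w\in\mathfrak{S}_n$ a reduced word and the corresponding $\tau_w$, we have $R(\beta)=\bigoplus_{w\in\mathfrak{S}_n}\tau_w\,\cor[x_1,\ldots,x_n]\,e(I^\beta)$ (and the analogous right-hand version). Let $\mathfrak{S}_{\underline\ell}$, resp.\ $\mathfrak{S}_{\underline m}$, be the parabolic subgroup of $\mathfrak{S}_n$ attached to the composition $(\Ht\beta_1,\ldots,\Ht\beta_r)$, resp.\ $(\Ht\gamma_1,\ldots,\Ht\gamma_s)$. The idempotents $e(\beta_1,\ldots,\beta_r)$ and $e(\gamma_1,\ldots,\gamma_s)$ select the $I$-sequences compatible with $\underline\ell$ and with $\underline m$, and grouping the basis elements $\tau_u\,\cor[x]\,e(\nu)$ of $B$ by the double coset of $u$ in $\mathfrak{S}_{\underline m}\backslash\mathfrak{S}_n/\mathfrak{S}_{\underline\ell}$, refined by the $I$-colouring, reproduces precisely the combinatorial datum of a family $(\beta_{j,k})_{j,k}$ in $\rl_+$ with $\beta_j=\sum_k\beta_{j,k}$ and $\gamma_k=\sum_j\beta_{j,k}$; to such a datum I attach the minimal-length double coset representative $w=w_{(\beta_{j,k})}$. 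I would then totally order these data by $\ell(w_{(\beta_{j,k})})$ (ties broken arbitrarily) and let $B_{\le d}\subseteq B$ be the span of those basis elements whose double coset has minimal representative of length $\le d$.

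The key step — and the one I expect to be \textbf{the main obstacle} — is to check that each $B_{\le d}$ is a sub-bimodule and to identify $B_{\le d}/B_{<d}$. Here I would use the standard factorisation $\ell(\sigma w\rho)=\ell(\sigma)+\ell(w)+\ell(\rho)$ valid for $w$ minimal in its double coset, $\sigma\in\mathfrak{S}_{\underline m}$ and $\rho$ running through minimal representatives of $(w^{-1}\mathfrak{S}_{\underline m}w\cap\mathfrak{S}_{\underline\ell})\backslash\mathfrak{S}_{\underline\ell}$, which gives $\tau_{\sigma w\rho}=\tau_\sigma\tau_w\tau_\rho$; thus $B_{\le d}$ is spanned by elements $\tau_\sigma\tau_w\tau_\rho\cdot(\text{polynomial})\cdot e(\nu)$ with $\ell(w)\le d$. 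Left multiplication by the $x_a$ and by the $\tau_a$ internal to a block of $\mathfrak{S}_{\underline m}$, and right multiplication by the generators of $\bigotimes_jR(\beta_j)$, are controlled through the defining relations: the relation $\tau_a^2e(\nu)=\qQ_{\nu_a,\nu_{a+1}}(x_a,x_{a+1})e(\nu)$ and the relation $(\tau_ax_b-x_{s_a(b)}\tau_a)e(\nu)\in\cor\,e(\nu)$ only create terms with strictly fewer $\tau$-letters coming from the $\mathfrak{S}_{\underline m}$- or $\mathfrak{S}_{\underline\ell}$-sides, hence still with $\ell(w)\le d$, while the braid relation leaves $\ell(w)$ unchanged; so $B_{\le d}$ is stable, all these corrections vanish in $B_{\le d}/B_{<d}$, and right multiplication by $\tau_{w_{(\beta_{j,k})}}$ identifies the length-$d$ summand of the subquotient with $\bigl(\bigotimes_k R(\gamma_k)e(\beta_{1,k},\ldots,\beta_{r,k})\bigr)\otimes_{\bigotimes_{j,k}R(\beta_{j,k})}\bigl(\bigotimes_j e(\beta_{j,1},\ldots,\beta_{j,s})R(\beta_j)\bigr)$, the map being an isomorphism by the PBW basis. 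The degree shift is $N=\deg(\tau_{w_{(\beta_{j,k})}})$, and since the inversions of a minimal double coset representative are exactly the pairs with one index in block $(j,k)$ and one in block $(j',k')$ satisfying $j<j'$ and $k'<k$, summing $(\alpha_{\nu_a},\alpha_{\nu_b})$ over them yields $N=-\sum_{1\le j<j'\le r,\;1\le k'<k\le s}(\beta_{j,k},\beta_{j',k'})$, as required.

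As a consistency check one can also run the argument by induction on $r$: the case $r=2$ is the classical Mackey theorem for the two parabolics $\mathfrak{S}_{\underline\ell}$ and $\mathfrak{S}_{\underline m}$, and the general case follows by associativity of $\conv$ applied to $M_1\conv(M_2\conv\cdots\conv M_r)$ together with the $r=2$ case, though the bookkeeping of the $(\beta_{j,k})$ and of the grading shifts is no lighter. For complete details see \cite[Proposition 2.7]{McNamara15} and \cite[Theorem 4.3]{KM17}.
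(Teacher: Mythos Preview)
Your sketch is essentially correct and follows the standard Mackey-type filtration argument for quiver Hecke algebras. Note, however, that the paper does not supply its own proof of this proposition: it is stated with attribution to \cite[Proposition~2.7]{McNamara15} and \cite[Theorem~4.3]{KM17} and used as a black box thereafter. Your outline is precisely a condensed version of the argument in those references (reduce to a bimodule filtration of $e(\gamma_1,\ldots,\gamma_s)R(\beta)e(\beta_1,\ldots,\beta_r)$, stratify by double cosets of the two parabolics, identify the associated graded via the PBW basis, and read off the degree shift from the inversions of the minimal representative), so there is nothing to compare against beyond the citations you already give.
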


Let us denote by $\g$ the symmetrizable Kac-Moody algebra associated with the Cartan datum $ (\cmA,\wlP,\Pi,\Pi^\vee,(\cdot,\cdot) ) $.
Then the quiver Heck algebra $R$ associated with the same Cartan datum categorifies the negative half $\Uqm$ of the quantum group $\Uq$ and its crystal/global basis (\cite{KL09,R08,R11,VV09,LV11}).
Let $B(\infty)$ be the crystal basis of $\Uqm$. Then there is a bijection between $B(\infty)$ and the set of the isomorphism classes of self-dual simple $R$-modules.
For $b \in B(\infty)$, let $S(b)$ be the self-dual simple $R$-module corresponding to $b$. Then we have
\eqn
S(\tf_i(b))\simeq L(i)\hconv S(b)\quad \text{and} \quad
S(\te_i(b))\simeq \hd\bl E_i (S(b))\br
\eneqn
for $i \in I$  up to grading shifts, 
where $\tf_i$ and $\te_i$ denote the Kashiwara operators on $B(\infty)$.

Let $\psi\cl R(\beta)\isoto R(\beta)$ be the ring automorphism
\eq \label{eq:psi}
\psi&\;:\hs{2ex}&\ba{l}
e(\nu_1,\ldots ,\nu_n) \mapsto e(\nu_n,\ldots ,\nu_1), \\
 x_k \mapsto x_{n+1-k} \qt{($1\le k\le n$),}\\
\tau_k\mapsto -  \tau_{n-k}\qt{($1\le k<n$),}
\ea\label{def:antipsi}
\eneq
 where $n=\height{\beta}$.
It induces a monoidal functor
$$\psi_*\cl R\gmod\isoto(R\gmod)^\rev.$$
Here, for a monoidal category $\sht$, $\sht^\rev$ is the new monoidal category with the reversed tensor product $\otimes^\rev$ (see \S\,\ref{subsub:mon}).
Hence, there is a functorial isomorphism
$$\psi_*(M\conv N) \simeq \psi_*(N) \conv \psi_*(M)$$
for graded $R$-modules $M$ and $N$.
Since $\psi$ is involutive, so is $\psi_*$.
Note that $\psi_*(L(i^n)) \simeq L(i^n)$ for $i \in I$ and $n \ge 0$.

\subsection{Affinizations and R-matrices}\

Let $R$ be a quiver Hecke algebra. 
We recall the notions of affinizations and R-matrices introduced  in \cite{KP18}.
For $\beta \in \rlQ_+$ and $i\in I$, let
\begin{align} \label{Eq: def of p}
\mathfrak{p}_{i, \beta}  \seteq \sum_{\nu \in I^\beta} \Bigl(\hs{1ex}  \prod_{a \in \{1, \ldots, \Ht(\beta) \},\ \nu_a=i} x_a \Bigr) e(\nu)\in R(\beta).
\end{align}
Then $\mathfrak{p}_{i, \beta} $ belongs to the center of $R(\beta)$.
\begin{df} \label{Def: aff}
Let $M$ be a simple $R(\beta)$-module. An \emph{affinization} of $M$ 
 with degree $d_{\Ma}$ is an $R(\beta)$-module $\Ma$ with an endomorphism $z_{\Ma}$ of $\Ma$
with degree $d_{\Ma} \in \Z_{>0}$
and an isomorphism $\Ma / z_{\Ma} \Ma \simeq M$ such that
\begin{enumerate}[\rm (i)]
\item $\Ma$ is a finitely generated free module over the polynomial ring $\bR[z_{\Ma}]$,
\item $\mathfrak{p}_{i,\beta} \Ma \ne 0$ for all $i\in I$.
\end{enumerate}
\end{df}

Let $\beta \in \rlQ_+$ with $m =  \Ht(\beta)$. For  $k=1, \ldots, m-1$ and $\nu \in I^\beta$, the \emph{intertwiner} $\varphi_k  $ is defined by 
$$
\varphi_k e(\nu) =
\bc
 (\tau_k x_k - x_k \tau_k) e(\nu) 
= (x_{k+1}\tau_k - \tau_kx_{k+1}) e(\nu) \\
\hs{5ex} =\bl\tau_k(x_k-x_{k+1})+1\br e(\nu) 
=\bl(x_{k+1}-x_{k})\tau_k-1\br e(\nu) 
& \text{ if } \nu_k = \nu_{k+1}, 
 \\
 \tau_k e(\nu) & \text{ otherwise.}
\ec
$$ 

\begin{lem} [{\cite[Lemma 1.5]{KKK18}}] \label{Lem: intertwiners} \
\begin{enumerate}[\rm (i)]
\item $\varphi_k^2 e(\nu) = \bl Q_{\nu_k, \nu_{k+1}} (x_k, x_{k+1} )+ \delta_{\nu_k, \nu_{k+1}} \br\, e(\nu)$.
\item $\{  \varphi_k \}_{k=1, \ldots, m-1}$ satisfies the braid relation.
\item For a reduced expression $w = s_{i_1} \cdots s_{i_t} \in \sg_m$, we set $\varphi_w \seteq  \varphi_{i_1} \cdots \varphi_{i_t} $. Then
$\varphi_w$ does not depend on the choice of reduced expression of $w$.
\item For $w \in \sg_m$ and $1 \le k \le m$, we have $\varphi_w x_k = x_{w(k)} \varphi_w$.
\item For $w \in \sg_m$ and $1 \le k < m$, if $w(k+1)=w(k)+1$, then $\varphi_w \tau_k = \tau_{w(k)} \varphi_w$.
\end{enumerate}
\end{lem}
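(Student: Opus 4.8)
All defining relations of $R(\beta)$ are supported on at most three consecutive strands, and so is each of the five assertions once the idempotents $e(\nu)$ are tracked; the plan is therefore to verify everything by explicit manipulation of the defining relations, arranged so that each part feeds the next. I would begin with the single-transposition case of (iv): for all $1\le k<m$, $1\le l\le m$ and all $\nu$, the identity $\varphi_k x_l e(\nu)=x_{s_k(l)}\varphi_k e(\nu)$. For $l\notin\{k,k+1\}$ this is immediate, and for $l\in\{k,k+1\}$ I would substitute the explicit forms of $\varphi_k$ into the relation $(\tau_k x_l-x_{s_k(l)}\tau_k)e(\nu)=\pm\delta_{\nu_k,\nu_{k+1}}e(\nu)$ and treat the cases $\nu_k=\nu_{k+1}$ and $\nu_k\ne\nu_{k+1}$ separately. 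Along the way I would record the rewriting rule $\tau_k f(x_k,x_{k+1})e(\nu)=f(x_{k+1},x_k)\tau_k e(\nu)-\partial_k(f)e(\nu)$ valid when $\nu_k=\nu_{k+1}$ (here $\partial_k$ is the Demazure operator); in particular $\tau_k(x_k-x_{k+1})e(\nu)=-(x_k-x_{k+1})\tau_k e(\nu)-2e(\nu)$.

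For (i): if $\nu_k\ne\nu_{k+1}$ then $\varphi_k e(\nu)=\tau_k e(\nu)$, so $\varphi_k^2 e(\nu)=\tau_k^2 e(\nu)=Q_{\nu_k,\nu_{k+1}}(x_k,x_{k+1})e(\nu)$ while $\delta_{\nu_k,\nu_{k+1}}=0$; if $\nu_k=\nu_{k+1}$ then $Q_{\nu_k,\nu_{k+1}}=0$, hence $\tau_k^2 e(\nu)=0$, and expanding $\varphi_k e(\nu)=\bl\tau_k(x_k-x_{k+1})+1\br e(\nu)$ and applying the two rewriting rules above gives $\varphi_k^2 e(\nu)=e(\nu)$, which is the value of the right-hand side. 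For (ii), the far-commutativity $\varphi_k\varphi_l=\varphi_l\varphi_k$ with $|k-l|>1$ is clear because the two intertwiners are built from generators on disjoint strands. The braid relation $\varphi_k\varphi_{k+1}\varphi_k e(\nu)=\varphi_{k+1}\varphi_k\varphi_{k+1}e(\nu)$ is the crux, and I would prove it by a case analysis on the triple $(\nu_k,\nu_{k+1},\nu_{k+2})$: when $\nu_k\ne\nu_{k+2}$ all correction terms in the cubic $\tau$-relation drop out and, after expanding each $\varphi$, the identity reduces to $\tau_k\tau_{k+1}\tau_k e(\nu)=\tau_{k+1}\tau_k\tau_{k+1}e(\nu)$; when $\nu_k=\nu_{k+2}$ one expands carefully and uses the cubic relation with its $\bQ$-term together with the rewriting rules. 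This last situation, and especially the subcase $\nu_k=\nu_{k+1}=\nu_{k+2}$, is where the bulk of the bookkeeping lies, and I expect it to be the main obstacle.

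Once (ii) is available, (iii) is formal: by Matsumoto's theorem any two reduced words for $w\in\sg_m$ differ by a sequence of braid moves and far-commutation moves, both of which $\varphi$ respects, so $\varphi_w$ is independent of the reduced expression. I would deduce (iv) by induction on $\ell(w)$: writing $w=s_j w'$ with $\ell(w)=\ell(w')+1$ gives $\varphi_w=\varphi_j\varphi_{w'}$, and the single-transposition identity from the first step closes the induction. Finally, for (v), I would assume $w(k+1)=w(k)+1$; then $\ell(ws_k)=\ell(w)+1$, a direct check gives $ws_k=s_{w(k)}w$ with $\ell(s_{w(k)}w)=\ell(w)+1$, and hence $\varphi_w\varphi_k=\varphi_{ws_k}=\varphi_{s_{w(k)}w}=\varphi_{w(k)}\varphi_w$. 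Applying this to $e(\nu)$, if $\nu_k\ne\nu_{k+1}$ then $\varphi_k e(\nu)=\tau_k e(\nu)$ and $\varphi_{w(k)}e(w\nu)=\tau_{w(k)}e(w\nu)$ and the desired $\varphi_w\tau_k e(\nu)=\tau_{w(k)}\varphi_w e(\nu)$ follows directly; if $\nu_k=\nu_{k+1}$ one substitutes $\varphi_k e(\nu)=\bl\tau_k(x_k-x_{k+1})+1\br e(\nu)$ and the analogous formula for $\varphi_{w(k)}$, cancels the $+1$ terms, moves the polynomial factors across $\varphi_w$ using (iv), applies the rewriting rule, and is left with $(x_{w(k)}-x_{w(k)+1})\bl\varphi_w\tau_k-\tau_{w(k)}\varphi_w\br e(\nu)=0$; since $R(\beta)$ is free over its polynomial subalgebra, left multiplication by the nonzero linear form $x_{w(k)}-x_{w(k)+1}$ is injective, which gives $\varphi_w\tau_k e(\nu)=\tau_{w(k)}\varphi_w e(\nu)$.
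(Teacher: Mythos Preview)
The paper does not prove this lemma at all: it is stated with a citation to \cite[Lemma~1.5]{KKK18} and no argument is given. Your plan is the standard direct verification from the defining relations, and it is essentially correct.

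Two small points worth tightening. First, in your sketch of (ii) you say that when $\nu_k\ne\nu_{k+2}$ the identity ``reduces to $\tau_k\tau_{k+1}\tau_k e(\nu)=\tau_{k+1}\tau_k\tau_{k+1}e(\nu)$''; this is only literally true when all three colors are distinct. The mixed cases $\nu_k=\nu_{k+1}\ne\nu_{k+2}$ and $\nu_k\ne\nu_{k+1}=\nu_{k+2}$ (with $\nu_k\ne\nu_{k+2}$) still have correction terms in some of the $\varphi$'s, and one must check that those cancel too; it is routine but not vacuous. Second, for (v) in the case $\nu_k=\nu_{k+1}$, the cleanest route is to use the alternative form $\varphi_k e(\nu)=\bl(x_{k+1}-x_k)\tau_k-1\br e(\nu)$ on both sides, so that (iv) immediately moves the linear factor past $\varphi_w$ and you arrive directly at $(x_{w(k+1)}-x_{w(k)})\bl\varphi_w\tau_k-\tau_{w(k)}\varphi_w\br e(\nu)=0$; with the form you wrote, the polynomial sits to the right of $\tau_k$ and an extra rewriting step is needed before (iv) applies. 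The injectivity of left multiplication by $x_{w(k)}-x_{w(k)+1}$ that you invoke is legitimate, by the faithfulness of the polynomial representation (equivalently, the PBW basis) of $R(\beta)$.
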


For $m,n \in \Z_{\ge 0}$, we set $w[m,n]$ to be the element of $\sg_{m+n}$ such that
$$
w[m,n](k) \seteq
\left\{
\begin{array}{ll}
 k+n & \text{ if } 1 \le k \le m,  \\
 k-m & \text{ if } m < k \le m+n.
\end{array}
\right.
$$

Let $\beta, \gamma \in \rlQ_+$ and set $m\seteq  \Ht(\beta)$ and $n\seteq \Ht(\gamma)$.
For $M \in R(\beta)\Mod$ and $N \in R(\gamma)\Mod$, the $R(\beta)\otimes R(\gamma)$-linear map $M \otimes N \rightarrow N \conv M$ defined by $$u \otimes v \mapsto \varphi_{w[n,m]}(v \etens u)$$
can be extended to an $R(\beta+\gamma)$-module homomorphism (up to a grading shift)
$$
\RR_{M,N}\cl  M\conv N \longrightarrow N \conv M.
$$

Let $\Ma$ be an affinization of a simple $R$-module $M$, and  let $N$ be a non-zero $R$-module. We define a homomorphism (up to a grading shift)
$$
\nR_{\Ma, N} \seteq  \z^{-s} \RR_{\Ma, N}\cl  \Ma \conv N \longrightarrow N \conv \Ma,
$$
where $s$ is the largest integer such that $ \RR_{\Ma, N}(\Ma \conv N) \subset \z^s (N \conv \Ma)$.
We define
$$
\Rr{M,N} \cl M \conv N \longrightarrow N \conv M
$$
to be the homomorphism (up to a grading shift) induced from $\nR_{\Ma, N}$ by specializing at $\z=0$. By the definition, $\Rr{M,N}$ never vanishes.
We now define
\begin{align*}
\La(M,N) &\seteq  \deg (\Rr{M,N}) , \\
\tLa(M,N) &\seteq   \frac{1}{2} \bl \La(M,N) + (\wt(M), \wt(N)) \br , \\
\Dd(M,N) &\seteq  \frac{1}{2} \bl\La(M,N) + \La(N,M)\br.
\end{align*}

Then we have (\cite[Lemma 3.11]{KKOP21})
\eqn
\text{$\de(M,N)$ and $\tLa(M,N)$ are non-negative integers.}
\eneqn
We also have (cf.\ \cite[Lemma 3.15]{KKKO18})
\eq \label{eq:Lalinear}
\La(M,N_1\conv N_2) = \La(M, N_1)+\La(M,N_2)
\eneq 
for  non-zero modules $N_1,N_2$ and a simple module $M$ which admits an affinization.

\Def \label{def:affreal}
We say that an $R$-module module $M$ is {\em \afr} if $M$ is real simple and $M$ admits an affinization.
\edf

\Prop[{\cite[Proposition~2.10]{KP18}}]\label{prop:ru}
Let $M$ and $N$ be simple $R$-modules. Assume that one of them is \afr.
Then we have
$$\HOM(M\conv N,N\conv M)=\cor \Rr{M,N}.$$

\enprop

In the case that $N$ has an affinization $(\Na,z_\Na)$,
we can define $\nR_{M,\Na}$ in a similar way as above.
Then we have
$\deg \nR_{\Ma,N}=\deg \nR_{M,\Na}$, and
$\nR_{\Ma,N}\vert_{z_{\Ma}=0}=\nR_{M,\Na}\vert_{z_{\Na}=0}$ up to a constant multiple
if $M$ or $N$ is real.
Hence $\Lambda(M,N)$ and $\rmat{M,N}$ (up to a constant multiple) are
well defined when either $M$ or $N$ is \afr.
Moreover, they do not depend on the choice of affinizations.

\subsection{Determinantial modules} \

Let $\Lambda \in \wlP_+$ and let $t_i$ be an indeterminate  for each $i\in I$.
Set
$$a_{\La, i}(t_i) \seteq t_i^{\langle h_i, \Lambda \rangle} \in \bR[t_i] \qquad \text{for } \ i\in I.$$

Let $\lambda\in\Lambda-\rlQ_+$, and write 
$ \beta \seteq  \Lambda - \lambda\in\rlQ_+$ and $n\seteq \Ht(\beta)$.
The \emph{cyclotomic quiver Hecke algebra} is the quotient of $R(\beta)$ given by
\eq
R^\Lambda(\lambda)\seteq\dfrac{R(\beta)}{ \sum_{i\in I} R(\beta)a_{\Lambda, i}(x_n e(\beta-\alpha_i, \alpha_i))R(\beta)}.
\label{eq:cyclo}
\eneq

See \cite{KKOP21} for more details.

Let $R^{\Lambda}(\lambda) \Mod$ be the category of graded $R^{\Lambda}(\lambda)$-modules.

We define the functors
\begin{align*}
F_i^{\Lambda} &\cl  R^{\Lambda}(\lambda)\Mod \rightarrow  R^{\Lambda}(\lambda-\alpha_i)\Mod , \\
E_i^{\Lambda} &\cl  R^{\Lambda}(\lambda)\Mod \rightarrow R^{\Lambda}(\lambda+\alpha_i)\Mod
\end{align*}
by
$
F_i^{\Lambda}M = R^{\Lambda}(\lambda-\alpha_i)e(\alpha_i,\beta)\otimes_{R^{\Lambda}(\lambda)}M $ and $
E_i^{\Lambda}M = e(\alpha_i,\beta-\alpha_i)M
$
for $M\in R^{\Lambda} (\lambda)\Mod$.
Similarly, for $m\in\Z_{\ge0}$, we define
\eqn
 F^{\Lambda}_{i}{}^{(m)}M&&=R^\La(\la-m\al_i)\tens_{R(\La-\la+m\al_i)}
F_i^{(m)}M\in R^{\Lambda}(\lambda-m\alpha_i)\Mod,\\
E^{\Lambda}_{i}{}^{(m)}M&&=E_i^{(m)}M\in R^{\Lambda}(\lambda+m\alpha_i)\Mod.
\eneqn

For $\lambda,\mu\in\wtl$, we write
$\lambda \wle \mu$ 
if there exists a sequence of
real positive roots $\beta_k$ ($1\le k\le \ell$)
such that $\la=s_{\beta_\ell}\cdots s_{\beta_1}\mu$ and
$(\beta_k,s_{\beta_{k-1}}\cdots s_{\beta_1}\mu)\ge0$ for $1\le k\le\ell$.
Here $s_{\beta}(\la)=\la-(\beta^\vee,\la)\beta$ with
$\beta^\vee=\frac{2}{(\beta,\beta)}\beta$.

Let $\lambda, \mu \in \weyl \Lambda$ such that $\lambda \wle \mu$.
The module $\dM(\lambda, \mu)$, called  the \emph{determinantial module},
is defined as follows.
Choose $w$, $v\in \weyl$ such that $ \lambda  = w\Lambda$ and $\mu  = v\Lambda$,
and then take their reduced expressions $\underline{w} = s_{i_1} \cdots s_{i_l}$ and  $\underline{v} = s_{j_1} \cdots s_{j_t}$, and
set $m_k = \langle h_{i_k},   s_{i_{k+1}} \cdots s_{i_l}\Lambda \rangle$ for $k=1,\ldots l$, and $n_k = \langle h_{j_k},   s_{j_{k+1}} \cdots s_{j_t}\Lambda \rangle$ for $k=1, \ldots, t$.
We define 
\begin{align*}
\dM(\lambda, \Lambda) & \seteq F^{\Lambda}_{i_1}{}^{ ( m_1) }  \cdots F^{\Lambda}_{i_l}{}^{  \,( m_l) } \bR, \\
\dM(\lambda, \mu) & \seteq E_{j_1}^{\,*\hskip 0.1em \,( n_1) }  \cdots E_{j_t}^{\,* \hskip 0.1em  \,( n_t) } \dM(\lambda, \La).
\end{align*}
The determinantial module $\dM(\lambda, \mu)$ does not depend on the choice of $w$, $v$ and their reduced expressions. 

We summarize properties of determinantial modules.
\begin{prop} [{\cite{KK11}, \cite[Lemma 1.7, Proposition 4.2]{KKOP18} and  \cite[Theorem 3.26]{KKOP21}}] \label{Prop: dM properties}
Let $\Lambda \in \wlP_+$, and $\lambda, \mu \in \weyl \Lambda$ with $\lambda \wle \mu$.
\begin{enumerate} [\rm (i)]
\item  $\dM(\lambda, \mu)$ is a real simple $R^{\La}$-module with an affinization.
\item If $ \langle h_i, \lambda \rangle \le 0$ and $s_i \lambda \preceq \mu $, then
$$
\ep_i( \dM(\lambda, \mu)) = - \langle h_i, \lambda \rangle \quad \text{and} \quad  E_i^{(- \langle h_i, \lambda \rangle)} \dM(\lambda, \mu) \simeq \dM(s_i\lambda, \mu).
$$
\item If $ \langle h_i, \mu \rangle \ge 0$ and $ \lambda \preceq s_i \mu $, then
$$
\ep^*_i( \dM(\lambda, \mu)) =  \langle h_i, \mu \rangle \quad \text{and} \quad  E_i^{* (\langle h_i, \mu \rangle)} \dM(\lambda, \mu) \simeq \dM(\lambda, s_i\mu).
$$
\item For $\La,\La'\in\pwtl$ and $w,v \in\weyl$ such that $v\ble w$, we have
$$\Mm(w\La,v\La)\conv\Mm(w\La',v\La')\simeq q^{-( v\La,  v\La'-w\La')}
 \Mm\bl w(\La+\La'),v(\La+\La')\br.$$ 
\item For $\lambda, \mu,\zeta \in \weyl \Lambda$ with $\lambda \wle \mu\wle\zeta$, we have
$\dM(\la,\zeta)\simeq\dM(\la,\mu)\hconv\dM(\mu,\zeta)$.
\end{enumerate}
\end{prop}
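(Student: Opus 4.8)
These five assertions are all contained in the cited references; here is the plan one would follow to recover them, organized around the Kang--Kashiwara categorification.

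\emph{Setup and statement (i).} The main input is \cite{KK11}: for $\La\in\wlP_+$ the Grothendieck group $K(R^{\La}\gmod)$ is isomorphic to the integrable highest weight module $V(\La)$, the classes of self-dual simple $R^\La$-modules form the upper global basis, and the functors $F^{\La}_i{}^{(m)}$, $E_i^{(m)}$, $E_i^{*(m)}$ categorify the corresponding divided powers, inducing the Kashiwara operators on the crystal $B(\La)$ at the level of simple modules. The definition $\dM(\lambda,\La)=F^{\La}_{i_1}{}^{(m_1)}\cdots F^{\La}_{i_l}{}^{(m_l)}\bR$ with $m_k=\langle h_{i_k},s_{i_{k+1}}\cdots s_{i_l}\La\rangle$ then exhibits it as the simple module attached to the extremal weight vector of weight $\lambda=w\La$ in $B(\La)$ (this is exactly how extremal vectors are reached by maximal string operators along a reduced word), and applying $E^{*(n_k)}_{j_k}$ along a reduced word for $v$ produces the simple module attached to an endpoint of weight $\mu=v\La$ that depends only on the pair $(\lambda,\mu)$; hence $\dM(\lambda,\mu)$ is well defined and simple, which is the simplicity half of (i). For realness and the affinization I would argue as in \cite{KKOP18}: the $R$-matrix invariant satisfies $\La(\dM(\lambda,\mu),\dM(\lambda,\mu))=0$, whence $\dM(\lambda,\mu)\conv\dM(\lambda,\mu)$ is simple; and $\dM(\lambda,\mu)$, viewed as an $R$-module through the quotient $R(\La-\lambda)\twoheadrightarrow R^\La(\lambda)$, admits an affinization because the central elements $\mathfrak{p}_{i,\La-\lambda}$ of \eqref{Eq: def of p} act nontrivially on a suitable $\bR[z]$-free lift (Definition \ref{Def: aff}).

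\emph{Statements (ii) and (iii).} Under the dictionary above these are crystal-combinatorial identities expressing $\ep_i(\dM(\lambda,\mu))$ and $\ep^*_i(\dM(\lambda,\mu))$ in terms of the defining reduced-word data, and they are \cite[Lemma 1.7]{KKOP18}. The plan is an induction on $\Ht(\La-\lambda)$ (resp.\ on $\ell(v)$): under the hypothesis $\langle h_i,\lambda\rangle\le 0$, $s_i\lambda\preceq\mu$ one may choose the reduced word for $w$ so that the outermost divided-power functor defining $\dM(\lambda,\mu)$ is a maximal $i$-string, which displays $E_i^{(-\langle h_i,\lambda\rangle)}\dM(\lambda,\mu)$ directly as $\dM(s_i\lambda,\mu)$; symmetrically, under $\langle h_i,\mu\rangle\ge 0$, $\lambda\preceq s_i\mu$ one adapts the reduced word for $v$ and reads off $E_i^{*(\langle h_i,\mu\rangle)}\dM(\lambda,\mu)\simeq\dM(\lambda,s_i\mu)$ together with the value of $\ep^*_i$. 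The compatibility of $E_i^{(m)}$, $E_i^{*(m)}$ with the crystal operators of \cite{KK11} is what makes the induction close.

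\emph{Statement (iv).} I would first dispose of the case $v=e$. Both $\Mm(w\La,\La)$ and $\Mm(w\La',\La')$ are (inflations of) cyclotomic modules, and a highest-weight computation — conveniently packaged via the Shuffle lemma, Proposition \ref{prop: shuffle lemma} — gives $\La(\Mm(w\La,\La),\Mm(w\La',\La'))+\La(\Mm(w\La',\La'),\Mm(w\La,\La))=0$, so the two modules strongly commute and $\Mm(w\La,\La)\conv\Mm(w\La',\La')$ is simple. Identifying its class in $K(R^{\La+\La'}\gmod)\cong V(\La+\La')$ against the extremal vector of weight $w(\La+\La')$ shows it is $\Mm(w(\La+\La'),\La+\La')$ up to a grading shift, the shift being forced by the degree bookkeeping built into the adjunctions \eqref{eq:adjoints}. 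The general case follows by applying $E^{*(n_k)}_{j_k}$ along a reduced word for $v$ to both sides and invoking (iii) repeatedly; this reproduces \cite[Proposition 4.2]{KKOP18}.

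\emph{Statement (v), and the main obstacle.} This is the multiplicativity $\dM(\lambda,\zeta)\simeq\dM(\lambda,\mu)\hconv\dM(\mu,\zeta)$ for $\lambda\wle\mu\wle\zeta$, i.e.\ \cite[Theorem 3.26]{KKOP21}. The plan is to show that $\bl\dM(\lambda,\mu),\dM(\mu,\zeta)\br$ is a normal pair: since both factors are \afr by (i), Proposition \ref{prop:ru} gives one-dimensional spaces of intertwiners, and one checks that the composite $\rmat{\dM(\mu,\zeta),\dM(\lambda,\mu)}\circ\rmat{\dM(\lambda,\mu),\dM(\mu,\zeta)}$ is nonzero, so that the head $\dM(\lambda,\mu)\hconv\dM(\mu,\zeta)$ is simple and coincides with the socle. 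One then identifies this head with $\dM(\lambda,\zeta)$ by matching $\ep$- and $\ep^*$-invariants on both sides via (ii), (iii) and the additivity \eqref{eq:Lalinear} of $\La$, inducting on $\Ht(\La-\zeta)$. I expect this last step — establishing normality of $\bl\dM(\lambda,\mu),\dM(\mu,\zeta)\br$ and pinning down the head exactly — to be the main obstacle, as it is the point where the full $R$-matrix calculus (the behaviour of $\La$, $\tLa$ and $\Dd$ under convolution, together with the cyclotomic crystal structure) is genuinely needed; the remaining items reduce cleanly to the crystal dictionary of \cite{KK11}.
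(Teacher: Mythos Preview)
The paper gives no proof of this proposition at all: it is stated as a compilation of results from the cited references \cite{KK11}, \cite[Lemma 1.7, Proposition 4.2]{KKOP18}, and \cite[Theorem 3.26]{KKOP21}, with no argument in the body of the paper. Your proposal correctly identifies this and offers a reasonable outline of how the cited sources establish each item, so in that sense you and the paper are in agreement.

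A few remarks on the sketch itself. For (i), your account of simplicity via the crystal dictionary is right, but the affinization claim is too vague: the actual construction (in \cite{KKOP21}) builds an explicit affinization of $\dM(\la,\mu)$ by deforming the cyclotomic quotient, not by an abstract ``suitable $\bR[z]$-free lift''. For (iv), the grading shift $q^{-(v\La,v\La'-w\La')}$ does not fall out of a generic ``degree bookkeeping'' but is computed explicitly in \cite{KKOP18} from $\tLa$-values between the two factors. For (v), the argument in \cite{KKOP21} does not proceed by checking normality of the pair via $\rmat{}\circ\rmat{}\not=0$ as you suggest; rather it goes by induction on $\ell(w)$ using (ii), (iii) and the head-simplicity of suitable triple convolutions, so your anticipated ``main obstacle'' is handled by a different mechanism than the one you outline. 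None of this affects the correctness of the statement, but if you were to flesh the sketch into a proof you would need to follow the cited arguments more closely at these points.
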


\subsection{The categories $\catC_w$ and  $\catC_{*,v}$} \label{Sec: catC} 

In this subsection, we recall  the definition of categories $\catC_w$, $\catC_{*,v}$ and $\catC_{w,v}$ appeared in  \cite{KKOP18} (see also \cite{TW16}).

For $M \in  R(\beta)\Mod$, we define
\begin{align*}
\gW(M) \seteq  \{  \gamma \in  \rlQ_+ \cap (\beta - \rlQ_+)  \mid  e(\gamma, \beta-\gamma) M \ne 0  \}, \\
\gW^*(M) \seteq  \{  \gamma \in  \rlQ_+ \cap (\beta - \rlQ_+)  \mid  e(\beta-\gamma, \gamma) M \ne 0  \}.
\end{align*}
Hence if $M=0$, then $\gW(M)=\emptyset$, and
if $M\not=0$, then $0,\beta\in\gW(M)$.

\Prop[{\cite[Proposition 3.7]{TW16}}]\label{prop:W}
For any $R$-module $M$,
we have
$$\gW(M)\subset\Sp\bl \gW(M)\cap\prD\br.$$
Here, for a subset $A\subset\R\tens_\Z\rlQ$,
we denote by $\Sp(A)$ the smallest subset of
$\R\tens_\Z\rlQ$ that is stable by the multiplication of $\R_{\ge0}$ and contains $A\cup\st{0}$.
\enprop
For $w\in \weyl$, we denote by $\catC_{w}$ the  full subcategory of $R\gmod$ 
consisting of  objects $M$ such that 
\begin{align*}
\gW(M) \subset \Sp( \prD \cap w \nrD ).
\end{align*}
By Proposition~\ref{prop:W}, this condition is equivalent to
$$\gW(M) \cap\prD\subset w\nrD.$$

Similarly,  for $v\in \weyl$, we denote by $\catC_{*,v}$ the  full subcategory of $R\gmod$  consisting of  objects $M$ such that
\begin{align*}
\gW^*(M) \subset \Sp( \prD \cap v \prD ).
\end{align*}
For $w,v \in \weyl$, we define $\catC_{w,v}$ to be the full subcategory of $R\gmod$ whose objects are contained in both of the subcategories $\catC_{w}$ and $\catC_{*,v}$.

The categories $\catC_w$, $\catC_{*,v}$, and $\catC_{w,v}$ are stable under  convolution products, grading shifts, extensions, taking subquotients.

\subsection{Graded braiders in $R\gmod$ associated with a Weyl group element} \

\begin{df}
A graded braider $(M,R_M, \phi)$  in $R\gmod$  is called \emph{non-degenerate} if 
$$R_M(L(i)) \cl  M \conv L(i) \rightarrow q^{\phi(\al_i)}L(i)\conv M$$ is a non-zero homomorphism for each $i\in I$.
\end{df}
If $(M,R_M, \phi)$ is non-degenerate, then $\phi(\al_i)=-\La(M,L(i))$ and $R_M(L(i))=c\,\rmat{M,L(i)}$ for some $c\in \corp^\times$.

\begin{thm}[{\cite[Proposition 4.1, Lemma 4.3]{KKOP21}}] \label{lem: c braider}
Let $R$ be a quiver Hecke algebra 
and let $M$ be a simple $R$-module. Then there exists a non-degenerate graded braider $(M,R_M,\phi)$ in $R \gmod$.
If $(M,R'_M,\phi')$ is another  non-degenerate graded braider,
then $\phi=\phi'$ and there exists a group homomorphism
$c\cl  \rlQ \rightarrow \bR^\times$ such that
$ R'_M(X) = c(\beta) R_M(X)$ for any $X \in R(\beta)\gmod$.
\end{thm}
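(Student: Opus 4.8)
The plan is to construct the braider explicitly using the $R$-matrices $\rmats{M,-}$, which are available for every simple $M$ once $M$ admits an affinization; but a general simple module need not be real, so one should use the affinization $\Ma$ (which exists at least for the modules of interest by Proposition \ref{Prop: dM properties}, and in general one invokes the construction of \cite{KP18} giving $\rmats{M,N}$ whenever $M$ admits an affinization). First I would fix an affinization $(\Ma,\z,\degM)$ of $M$ and, for each homogeneous $X$, set $R_M(X)\seteq \rmats{M,X}\cl M\conv X\to X\conv M$ up to the appropriate grading shift $q^{-\La(M,X)}$; the definition of $\La(M,N)$ and the identity \eqref{eq:Lalinear} show that putting $\phi(\beta)\seteq-\La(M,S(\beta))$ — equivalently $\phi(\al_i)=-\La(M,L(i))$ extended $\Z$-linearly, which is legitimate because $\La(M,-)$ is additive on convolution products — makes the target land in $q^{\phi(\beta)}X\conv M$ for $X\in R(\beta)\gmod$. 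Functoriality in $X$ is the compatibility of $R$-matrices with morphisms, and the hexagon/compatibility with $\conv$ in Definition \ref{def:graded braider} is precisely the multiplicativity $\rmats{M,X\conv Y}=(X\conv\rmats{M,Y})\circ(\rmats{M,X}\conv Y)$, which follows from the corresponding property of the intertwiners $\vphi_{w[n,m]}$ in Lemma \ref{Lem: intertwiners} together with the normalization bookkeeping for $\z$. Non-degeneracy is immediate: $R_M(L(i))$ is (a nonzero multiple of) $\rmats{M,L(i)}$, which never vanishes by construction of the $R$-matrix.

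For the uniqueness part, suppose $(M,R'_M,\phi')$ is another non-degenerate graded braider. The key input is Proposition \ref{prop:ru} (together with Proposition \ref{prop:W} and the structure of $R\gmod$): for each $i$, $R'_M(L(i))\in\Hom(M\conv L(i),\,q^{\phi'(\al_i)}L(i)\conv M)$, and since $M$ is simple and $L(i)$ is \afr, this Hom space is one-dimensional, spanned by $q^{\phi'(\al_i)}$-shifted $\rmats{M,L(i)}$; in particular the shift is forced, giving $\phi'(\al_i)=-\La(M,L(i))=\phi(\al_i)$, hence $\phi=\phi'$ since both are $\Z$-linear on $\rlQ$. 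Then $R'_M(L(i))=c_i\,R_M(L(i))$ for a unique $c_i\in\cor^\times$ (nonzero by non-degeneracy). I would then define $c\cl\rlQ\to\cor^\times$ by $c(\al_i)=c_i$ and extend multiplicatively, and prove $R'_M(X)=c(\beta)R_M(X)$ for all $X\in R(\beta)\gmod$ by induction on $\Ht(\beta)$: every simple $X$ in $R(\beta)\gmod$ with $\beta\neq0$ is a quotient (or subobject) of $L(i)\conv X'$ for suitable $i$ and $X'\in R(\beta-\al_i)\gmod$, so the braider axiom (the pentagon in Definition \ref{def:graded braider}) expresses $R'_M(L(i)\conv X')$ as the composite $(L(i)\conv R'_M(X'))\circ(R'_M(L(i))\conv X')$, which by the inductive hypothesis and the $i=1$ case equals $c_i\,c(\beta-\al_i)$ times the same composite for $R_M$, i.e. $c(\beta)R_M(L(i)\conv X')$; functoriality then propagates the identity to the quotient $X$, and additivity of braiders handles non-simple $X$. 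One must check this $c$ is well defined independently of the chosen path of simple roots, which follows because the scalar relating $R'_M$ and $R_M$ on a fixed module is unique (again by Proposition \ref{prop:ru}) and the candidate values $c(\beta)$ computed along any two reduced paths must therefore agree.

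The main obstacle I expect is the existence half — specifically verifying the hexagon axiom \eqref{Eq: central obj}/Definition \ref{def:graded braider} for the candidate $R_M(X)=\rmats{M,X}$ with all the grading shifts correctly tracked. The subtlety is that $\rmats{M,-}$ is defined by first forming the $\z$-linear map $\nR_{\Ma,X}$ and then specializing $\z=0$, and compatibility with $\conv$ has to be proved at the level of affinizations before specialization; one needs that $\nR_{\Ma,X\conv Y}$ factors as the evident composite of $\nR_{\Ma,X}$ and $\nR_{\Ma,Y}$ up to a power of $\z$ matching $\La(M,X)+\La(M,Y)-\La(M,X\conv Y)=0$ (by \eqref{eq:Lalinear}), so no extra $\z$ is lost and the specialized maps compose correctly. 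This is essentially the content of the cited \cite[Proposition 4.1]{KKOP21}; in the write-up I would either reproduce that argument via the braid relation for intertwiners (Lemma \ref{Lem: intertwiners}(ii),(iv),(v)) or cite it directly. The uniqueness half, by contrast, is formal once Proposition \ref{prop:ru} is in hand.
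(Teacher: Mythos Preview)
This theorem is not proved in the paper; it is cited from \cite[Proposition~4.1, Lemma~4.3]{KKOP21}. Your uniqueness argument is correct and is the standard one: since each $L(i)$ is \afr, Proposition~\ref{prop:ru} gives $\HOM(M\conv L(i),L(i)\conv M)=\cor\,\rmat{M,L(i)}$, which forces $\phi'(\al_i)=-\La(M,L(i))=\phi(\al_i)$ and $R'_M(L(i))=c_i\,R_M(L(i))$; the inductive extension via the braider axiom is clean.

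The existence argument, however, has a genuine gap. Setting $R_M(X)\seteq\rmat{M,X}$ for all $X$ does \emph{not} yield a graded braider, because $\deg\rmat{M,X}=\La(M,X)$ depends on $X$ itself and not only on $\gamma=-\wt(X)$, whereas Definition~\ref{def:graded braider} requires a single shift $\phi(\gamma)$ uniform over all of $R(\gamma)\gmod$. For a simple subquotient $S$ of $L(\nu_1)\conv\cdots\conv L(\nu_n)$ one generically has $\La(M,S)<\sum_k\La(M,L(\nu_k))=-\phi(\gamma)$; in that case the correct braider satisfies $R_M(S)=0$ while $\rmat{M,S}\neq0$ (this is precisely the dichotomy exploited in the proof of Proposition~\ref{prop:kernel}). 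Your functoriality claim fails for the same reason: the family $\{\rmat{M,X}\}_X$ does not assemble into a natural transformation since the target grading shifts disagree. A secondary issue is that you presuppose an affinization of $M$, which is not part of the hypothesis. The construction in \cite{KKOP21} avoids both problems: one fixes $\phi(\al_i)\seteq-\La(M,L(i))$, builds $R_M$ on the projectives $R(\gamma)e(\nu)\simeq R(\al_{\nu_1})\conv\cdots\conv R(\al_{\nu_n})$ as a composite of the maps $\nR_{M,R(\al_{\nu_k})}$ (which exist for any simple $M$ since $R(\al_i)$ is an affinization of $L(i)$), checks that this assembles into an $R(\beta+\gamma)$-$R(\gamma)$-bimodule map $M\conv R(\gamma)\to q^{\phi(\gamma)}R(\gamma)\conv M$, and then obtains $R_M(X)$ for arbitrary $X\in R(\gamma)\gmod$ by tensoring over $R(\gamma)$.
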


Fix an element $w$ in the Weyl group $\weyl$.
For $\La \in \wlP_+$, set
\begin{align*}
\dC_\La=\dC_{w,\, \La} \seteq  \dM(w \Lambda, \Lambda ) \qquad \text{and} \quad \dC_i \seteq  \dC_{\La_i} \quad (i \in I).
\end{align*}

For $i\in I$, we set
$$
\la_i \seteq
\left\{
\begin{array}{ll}
w \La_i + \La_i& \text{ if } w \La_i \ne \La_i,  \\
0& \text{ if } w \La_i  =  \La_i.
\end{array}
\right.
$$
Note that $\dC_i \simeq \triv$ if and only if $ w\La_i = \La_i$. We have
\begin{align}
\La(\dC_i, L(j)) = (\alpha_j, \alpha_j) \ep_j^*(\dC_i) + (\alpha_j, w\La_i - \La_i) = (\la_i, \alpha_j).
\end{align}

Applying Theorem~\ref{lem: c braider}, we have a non-degenerate
graded braider
$(\dC_i, \coR_{\dC_i}, \dphi_i)$ for $i\in I$.

\begin{prop}[{\cite[Proposition 5.1]{KKOP21}}] \label{Prop: canonical braiders} 
The family $(\dC_i, \coR_{\dC_i}, \dphi_i)_{ i\in I}$
is a real commuting family of  non-degenerate  graded braiders in $R\gmod$ and
$$
\dphi_i(\beta)  = - ( \la_i , \beta) \qquad \text{for any $\beta\in \rtl$.}
$$
\end{prop}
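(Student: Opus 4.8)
The plan is to verify, one by one, the three defining conditions (a)--(c) of a real commuting family of graded braiders for the family $(\dC_i,\coR_{\dC_i},\dphi_i)_{i\in I}$, together with the stated formula for $\dphi_i$. First I would pin down $\dphi_i$. Since $(\dC_i,\coR_{\dC_i},\dphi_i)$ is non-degenerate, Theorem~\ref{lem: c braider} gives $\dphi_i(\al_j)=-\La(\dC_i,L(j))$; combining this with the already-recorded identity $\La(\dC_i,L(j))=(\la_i,\al_j)$ yields $\dphi_i(\al_j)=-(\la_i,\al_j)$, and since both sides are $\Z$-linear in $\beta$ this gives $\dphi_i(\beta)=-(\la_i,\beta)$ for all $\beta\in\rtl$. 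In particular the weight of $\dC_i$ is $\wt(\dC_i)=w\La_i-\La_i=-(\La_i-w\La_i)$, so writing $\lambda_i\seteq\wt(\dC_i)$ as the grading of $\dC_i$ in the $\rtl$-grading of $R\gmod$, condition (a) asks for $\dphi_i(\la_i)=0$ and $\dphi_i(\la_j)+\dphi_j(\la_i)=0$. The first is $-(\la_i,\,w\La_i-\La_i)=0$, i.e.\ $(\la_i,w\La_i-\La_i)=0$: unwinding $\la_i$, when $w\La_i\ne\La_i$ this is $(w\La_i+\La_i,w\La_i-\La_i)=(w\La_i,w\La_i)-(\La_i,\La_i)=0$ by $W$-invariance of $(\cdot,\cdot)$; when $w\La_i=\La_i$ it is trivial. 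The second, $\dphi_i(\la_j)+\dphi_j(\la_i)=-(\la_i,w\La_j-\La_j)-(\la_j,w\La_i-\La_i)$, I would rewrite using $\la_i=w\La_i+\La_i$ (in the nontrivial case) and expand; the cross terms $(w\La_i,w\La_j)-(\La_i,\La_j)$ and $(w\La_j,w\La_i)-(\La_j,\La_i)$ each vanish by $W$-invariance and symmetry, giving $0$. (The degenerate cases where one of $\la_i,\la_j$ is $0$ are immediate.)

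Next, conditions (b) and (c). For (b), $\coR_{\dC_i}(\dC_i)\in\cor^\times\id_{\dC_i\conv\dC_i}$: since $\dC_i=\Mm(w\La_i,\La_i)$ is real simple, $\dC_i\conv\dC_i$ is simple, hence $\End(\dC_i\conv\dC_i)=\cor$, and $\coR_{\dC_i}(\dC_i)$ is a non-zero endomorphism (non-zero because the braider is non-degenerate and, more directly, because $R$-matrices between a real module and itself do not vanish — $\Lambda(\dC_i,\dC_i)=0$ by reality via Proposition~\ref{Prop: dM properties}(iv) with $\La=\La'$, so $\rmat{\dC_i,\dC_i}$ is an isomorphism up to grading shift). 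Therefore $\coR_{\dC_i}(\dC_i)$ is a non-zero scalar. For (c), I would argue that $\dC_i$ and $\dC_j$ strongly commute, i.e.\ $\dC_i\conv\dC_j$ is simple: by Proposition~\ref{Prop: dM properties}(iv), $\Mm(w\La_i,\La_i)\conv\Mm(w\La_j,\La_j)\simeq q^{-(\La_j,\La_j-w\La_j)}\Mm(w(\La_i+\La_j),\La_i+\La_j)$, which is simple. Consequently $\dC_j\conv\dC_i$ is also simple and isomorphic to $\dC_i\conv\dC_j$ up to grading shift, so $\End(\dC_i\conv\dC_j)=\cor$; the composite $\coR_{\dC_j}(\dC_i)\circ\coR_{\dC_i}(\dC_j)$ is an endomorphism of $\dC_i\conv\dC_j$, which is non-zero because each factor is (being, up to scalar, the non-vanishing $R$-matrix $\rmat{\dC_i,\dC_j}$, resp.\ $\rmat{\dC_j,\dC_i}$, between strongly commuting — in particular real — simple modules). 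Hence it lies in $\cor^\times\id_{\dC_i\conv\dC_j}$. One should also handle the trivial cases $\dC_i\simeq\triv$ separately, where the braider is the identity and all conditions are vacuous.

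The step I expect to be the main obstacle is justifying that the composite $\coR_{\dC_j}(\dC_i)\circ\coR_{\dC_i}(\dC_j)$ does not vanish. The cleanest route is to identify each $\coR_{\dC_i}(\dC_j)$, up to a non-zero scalar, with the renormalized $R$-matrix $\rmat{\dC_i,\dC_j}$ — this follows from Proposition~\ref{prop:ru} (applicable since $\dC_i$ is \afr\ by Proposition~\ref{Prop: dM properties}(i)), which says $\HOM(\dC_i\conv\dC_j,\dC_j\conv\dC_i)=\cor\,\rmat{\dC_i,\dC_j}$, so the non-degenerate braider morphism must be a non-zero multiple of it — and then invoke that $\rmat{\dC_i,\dC_j}$ never vanishes by its very construction (stated after Definition~\ref{def:affreal}), together with the fact that for strongly commuting simples the convolution in either order is simple so the $R$-matrices are isomorphisms, forcing the composite to be a non-zero scalar rather than merely non-zero. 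A subtlety worth spelling out: one must check the grading shifts match so that the composite genuinely lands in $\cor^\times\id$ rather than in $\cor^\times q^m\id$ for some $m\ne 0$; this follows from $\La(\dC_i,\dC_j)+\La(\dC_j,\dC_i)=2\,\de(\dC_i,\dC_j)$ and the fact that $\de(\dC_i,\dC_j)=0$ when the two strongly commute, which is exactly our situation by Proposition~\ref{Prop: dM properties}(iv).
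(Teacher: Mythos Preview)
The paper does not prove this proposition here; it is cited from \cite[Proposition~5.1]{KKOP21}, so there is no in-text argument to compare against. Your derivation of $\dphi_i(\beta)=-(\la_i,\beta)$ from non-degeneracy and the displayed identity $\La(\dC_i,L(j))=(\la_i,\al_j)$ is correct, and your verification of condition~(a) via $\weyl$-invariance of $(\cdot,\cdot)$ is fine. The reduction of (b) and (c) to the strong commutativity of $\dC_i$ with $\dC_j$ (Proposition~\ref{Prop: dM properties}(iv)) is also the right strategy.

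The genuine gap is precisely the step you flagged. Non-degeneracy asserts only that $\coR_{\dC_i}(L(j))\ne 0$ for each $j\in I$; it does not by itself force $\coR_{\dC_i}(X)\ne 0$ for an arbitrary simple $X$. Proposition~\ref{prop:ru} tells you $\HOM(\dC_i\conv\dC_j,\dC_j\conv\dC_i)=\cor\,\rmat{\dC_i,\dC_j}$, so $\coR_{\dC_i}(\dC_j)$ is \emph{some} scalar multiple of $\rmat{\dC_i,\dC_j}$ --- but that scalar could be zero, and your sentence ``the non-degenerate braider morphism must be a non-zero multiple of it'' simply asserts the conclusion. What closes the gap is the criterion, recorded in the proof of Proposition~\ref{prop:kernel} and drawn from \cite[Proposition~4.4]{KKOP21}, that for a simple $S$ one has $\coR_{\dC_\La}(S)=0$ if and only if $\La(\dC_\La,S)<-(w\La+\La,\wt(S))$. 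You then need the explicit value $\La(\dC_i,\dC_j)=-(w\La_i+\La_i,\,w\La_j-\La_j)$, which is an instance of \cite[Theorem~4.12]{KKOP18} (invoked in this paper in the proof of the lemma containing \eqref{eqe:LL}). Once that equality is in hand, the criterion gives $\coR_{\dC_i}(\dC_j)\ne 0$, and since $\dC_i\conv\dC_j$ is simple the rest of your argument --- including the grading check via $\de(\dC_i,\dC_j)=0$ --- goes through.
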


\begin{thm}[{\cite[Theorem 5.2]{KKOP21}}] \label{Thm: R Ci iso}  
For $i\in I$ and any $R(\beta)$-module $N$ in $\catC_w$,
$ \coR_{\dC_i}(N)$ is an isomorphism.
\end{thm}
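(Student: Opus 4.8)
The plan is to reduce the assertion, using only the formal properties of graded braiders, to a commutation estimate between two determinantial modules, and then to invoke the numerical formulas for these from the prequel.

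\emph{Step 1 (reduction to generators).} Let $\mathcal E$ be the class of $X\in\catC_w$ for which $\coR_{\dC_i}(X)$ is an isomorphism. Since convolution is exact and $\catC_w$ is stable under subquotients and extensions, and since $\coR_{\dC_i}(-)$ is a natural transformation between the exact functors $\dC_i\conv(-)$ and a grading shift of $(-)\conv\dC_i$, a standard diagram chase (five lemma together with a dimension count, using $\dim(M\conv N)=\dim(N\conv M)$) shows that $\mathcal E$ is closed under subquotients and extensions. Moreover, the compatibility of a graded braider with $\conv$ (Definition~\ref{def:graded braider}, cf.\ \eqref{Eq: central obj}) expresses $\coR_{\dC_i}(X\conv Y)$ as a composite built from $\coR_{\dC_i}(X)$ and $\coR_{\dC_i}(Y)$, so $\mathcal E$ is also closed under $\conv$. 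As every object of $\catC_w$ has finite length, it suffices to exhibit a generating family inside $\mathcal E$. Fixing a reduced word $\ii=(i_1,\dots,i_\ell)$ of $w$ and the associated cuspidal root modules $S_{\beta_1},\dots,S_{\beta_\ell}$, with $\beta_k=s_{i_1}\cdots s_{i_{k-1}}\al_{i_k}\in\prD\cap w\nrD$, the PBW theory for $\catC_w$ (\cite{McNamara15,TW16}; see also \cite{KKOP18}) shows that every simple object of $\catC_w$ is a subquotient of a convolution product of the $S_{\beta_k}$'s. Hence it is enough to prove $S_{\beta_k}\in\mathcal E$ for each $k$.

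\emph{Step 2 (rephrasing as a commutation).} The module $\dC_i=\dM(w\La_i,\La_i)$ is \afr\ by Proposition~\ref{Prop: dM properties}(i), and by Theorem~\ref{lem: c braider} the non-degenerate braider on $\dC_i$ is, up to a group homomorphism $\rlQ\to\cor^\times$, given by the $R$-matrices $\rmat{\dC_i,-}$; in particular $\coR_{\dC_i}(S_{\beta_k})$ is a nonzero scalar multiple of $\rmat{\dC_i,S_{\beta_k}}$, and is an isomorphism if and only if $\rmat{\dC_i,S_{\beta_k}}$ is. By Proposition~\ref{prop:ru} and standard properties of $R$-matrices, the latter holds if and only if $\dC_i\conv S_{\beta_k}$ is simple, equivalently $\Dd(\dC_i,S_{\beta_k})=0$, i.e. $\La(\dC_i,S_{\beta_k})+\La(S_{\beta_k},\dC_i)=0$. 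Comparing the grading shift in the target of $\coR_{\dC_i}(S_{\beta_k})$ with Proposition~\ref{Prop: canonical braiders} already gives $\La(\dC_i,S_{\beta_k})=-\dphi_i(\beta_k)=(\la_i,\beta_k)$ for free, so the theorem comes down to the single identity $\La(S_{\beta_k},\dC_i)=-(\la_i,\beta_k)$.

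\emph{Step 3 (the numerical input --- the main obstacle).} Writing $j=i_k$, $u=s_{i_1}\cdots s_{i_k}$ and $u'=s_{i_1}\cdots s_{i_{k-1}}$, one has $S_{\beta_k}\simeq\dM(u\La_j,u'\La_j)$ up to a grading shift, where $u'$ and $u$ are prefixes of $\ii$, so that $e\le u'\le u\le w$ in the left weak order. Thus what remains is to show that the ``frozen'' determinantial module $\dM(w\La_i,\La_i)$ commutes with every ``nested'' determinantial module $\dM(u\La_j,u'\La_j)$. I expect this to be the only genuinely nontrivial point. To establish it I would use the explicit formulas for $\La$ (equivalently $\tLa$) between determinantial modules from \cite{KKOP18} --- the module-theoretic incarnation of the $q$-commutation relations among unipotent quantum minors --- together with the additivity \eqref{eq:Lalinear}, to compute $\La(S_{\beta_k},\dC_i)$ and check that it equals $-(\la_i,\beta_k)$; the dominance of $\La_i$ and the chain $e\le u'\le u\le w$ are precisely what force the exponents to cancel. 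Alternatively, using the factorization $\dM(w\La_i,\La_i)\simeq\dM(w\La_i,u\La_i)\hconv\dM(u\La_i,\La_i)$ of Proposition~\ref{Prop: dM properties}(v) together with Proposition~\ref{Prop: dM properties}(iv), I would first reduce to commutation of the fundamental determinantial modules $\dM(w\La_i,\La_i)$ and $\dM(u\La_j,\La_j)$ with $u\le w$, and then carry out the computation there.
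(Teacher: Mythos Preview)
The paper does not prove this theorem here; it is quoted from the prequel as \cite[Theorem~5.2]{KKOP21}, so there is nothing in the present paper to compare against.  What follows is an assessment of your argument on its own terms.

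Your overall architecture is sound and is close to how the result is established in \cite{KKOP21}: reduce to the dual-PBW (cuspidal) generators $S_{\beta_k}\simeq\dM(u\La_j,u'\La_j)$ of $\catC_w$ using closure of the ``isomorphism locus'' under subquotients, extensions and convolution, and then verify $\de(\dC_i,S_{\beta_k})=0$ from the $\La$-formulas for determinantial modules in \cite{KKOP18}.

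There is, however, a logical slip in Step~2.  It is not correct that the non-degenerate braider $\coR_{\dC_i}$ is ``up to a character'' equal to the family $\rmat{\dC_i,-}$: the $R$-matrices $\rmat{\dC_i,-}$ do \emph{not} form a braider, because $\rmat{\dC_i,X\conv Y}$ need not coincide with the composite $(X\conv\rmat{\dC_i,Y})\circ(\rmat{\dC_i,X}\conv Y)$ when the latter is zero.  What Proposition~\ref{prop:ru} gives is only that for each \emph{simple} $X$ one has $\coR_{\dC_i}(X)\in\cor\,\rmat{\dC_i,X}$, and this scalar may well be zero.  Consequently your claim that $\La(\dC_i,S_{\beta_k})=(\la_i,\beta_k)$ comes ``for free'' from the grading shift is circular: the degree of $\coR_{\dC_i}(S_{\beta_k})$ equals $\La(\dC_i,S_{\beta_k})$ only \emph{after} you know the map is nonzero.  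The fix is to move this into Step~3: compute $\La(\dC_i,S_{\beta_k})$ directly from the determinantial formulas in \cite{KKOP18} (specifically the commutation results around \cite[Theorem~4.12]{KKOP18} for $\dM(w\La,\La)$ against $\dM(x\La',y\La')$ with $e\le y\le x\le w$).  Equality with $(\la_i,\beta_k)$ then forces $\coR_{\dC_i}(S_{\beta_k})\ne0$ by \cite[Proposition~4.4]{KKOP21}, whence $\coR_{\dC_i}(S_{\beta_k})$ is a nonzero multiple of $\rmat{\dC_i,S_{\beta_k}}$; the companion identity $\La(S_{\beta_k},\dC_i)=-(\la_i,\beta_k)$ from the same source yields $\de=0$ and finishes the proof.
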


We set $\lG \seteq \soplus_{i\in I}\Z\La_i$ and define a $\Z$-bilinear map $\gH\cl \lG \times \lG \rightarrow \Z$ defined by
$$
\gH( \La_i, \La_j ) \seteq  - \tLa(\dC_i, \dC_j) = (\La_i, w \La_j - \La_j).
$$
Then, for $i,j\in I$, we have
\begin{align*}
\dphi_i( -  \wt( \dC_j )) &= ( w\La_i + \La_i, w\La_j - \La_j)
 = (   \La_i, w\La_j) - (  \La_j, w\La_i)\\
&= \gH(\La_i, \La_j) - \gH(\La_j, \La_i).
\end{align*}
Thanks to Theorem \ref{Thm: graded localization} and
Proposition~\ref{Prop: canonical braiders}, we have the localization  of $\catC_w$  
by the non-degenerate graded braiders $\set{\dC_i}{i\in I}$ which we denote by
$$
\lRg \seteq  \catC_w [ \dC_i^{\conv -1 } \mid i \in I ]. 
$$
By the choice above of $ \gH$, for $ \La = \sum_{i\in I} m_i \La_i \in  \lG_{\ge0}$, we have
$$
 \dC_\La  \simeq (\triv, \La) ,\qquad \dC_\La^{\conv-1}\simeq q^{\gH(\La,\La)}(\triv, -\La).
$$
Thus, for $\La = \sum_{i\in I} a_i \La_i \in \lG\subset\wtl$, 
we simply write $\dC_\La \seteq ( \triv,  \La )\in \lRg$. 
We have 
$$\dC_\La\conv \dC_{\La'} \simeq q^{\gH(\La,\La')} \dC_{\La+\La'}.$$

The following is a summary of the results in \cite{KKOP21}
on the localization $\lRg$ of $\catC_w$ and  the localization functor $\Phi_w\cl \catC_w \to \lRg$: 
\bnum
\item the objects $\Phi_w(\dC_i)$ are invertible in $\lRg$,
\item  $\Phi_w\bl R_{C_\La}(X) \br$ is an isomorphism for any $\La\in\wlP_+$ and $X\in \Cw$, 
\item for any simple object $S$ of $\Cw$,
the object $\Phi_w(S)$ is simple in $\tcatC_w$,
 \item
every simple object of $\lRg$ is isomorphic to $\dC_{ \La } \circ \Phi_w(S) $ for some simple object $S$ of $\catC_w$ and $ \La \in  \wlP $,
\item
 for two simple objects $S$ and $S'$ in $\catC_w$ and $\La,\La'\in \wlP$,
$\dC_{\La} \conv  \Phi_w(S) \simeq \dC_{\La'} \conv  \Phi_w(S') $ in $\lRg$
if and only if $q^{\gH(\La,\mu)} \dC_{\La+\mu} \conv S  \simeq  q^{\gH(\La',\mu)} \dC_{\La'+\mu} \conv  S'  $ in
$\catC_w$
for some $\mu\in \wlP$
such that $\La+\mu, \La'+\mu \in \wlP_+$, 
\item the category $\lRg$ is abelian and every objects has
 finite length. 
\item  the grading shift functor $q$ and the contravariant functor $M \mapsto M^\star$ on $\catC_w$   are extended to $\lRg$.
\item 
for any simple module $M\in\lRg$, there exists a unique $n\in\Z$ such that
$q^nM$ is self-dual.
\end{enumerate}

\begin{thm} [{\cite[Theorem 5.7]{KKOP21}}]
Every simple object in $\lRg$ has a right dual.
\end{thm}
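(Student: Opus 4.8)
The plan is to reduce to simple objects induced from $\catC_w$ and then to build a right dual out of a determinantial module. By the properties of the localization $\lRg$ recalled above, every simple object of $\lRg$ is isomorphic, up to a grading shift, to $\dC_\La\conv\Phi_w(S)$ for some $\La\in\soplus_{i\in I}\Z\La_i$ and some simple $S\in\catC_w$. The objects $q$ and $\dC_\La$ are invertible in $\lRg$ (the $\dC_i$ are invertible and $\dC_\La\conv\dC_{-\La}\simeq q^{\gH(\La,-\La)}\triv$), and tensoring with an invertible object sends a right dual of $X$ to a right dual of $Z\conv X$; hence it suffices to produce a right dual of $\Phi_w(S)$ for each simple $S\in\catC_w$.

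Fix such an $S\in R(\beta)\gmod$. As $S\in\catC_w$ we have $\gW(S)\cap\prD\subset w\nrD$, so $\beta$ lies in the cone $\Sp(\prD\cap w\nrD)$, which is generated by $\set{\La-w\La}{\La\in\wlP_+}$. The key input is a \emph{complementation lemma}: one may choose $\La\in\wlP_+$, a simple module $T\in\catC_w\cap R(\gamma)\gmod$ with $\gamma\seteq\La-w\La-\beta$, and nonzero homomorphisms
\[
\varpi\cl S\conv T\ \epito\ \dC_\La \qquad\text{and}\qquad \iota\cl \dC_\La\ \hookrightarrow\ T\conv S .
\]
Thus $T$ is a simple ``$\dC_\La$-complement'' of $S$ inside $\catC_w$; morally this reflects the fact that, after inverting the frozen classes, every simple of $\catC_w$ divides a monomial in the $[\dC_i]$, and it should be obtainable for $\La$ large enough from Proposition~\ref{Prop: dM properties}(iv)--(v).

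Granting the lemma, set $Y\seteq\Phi_w(T)\conv\dC_\La^{\conv-1}$, a grading shift of a simple object of $\lRg$. Applying the exact monoidal functor $\Phi_w$ to $\varpi$ and $\iota$, and using that $\dC_\La$ is invertible and central in $\lRg$ (so that $\dC_\La^{\conv-1}\conv\Phi_w(S)$ and $\Phi_w(S)\conv\dC_\La^{\conv-1}$ are isomorphic up to a grading shift), we obtain nonzero morphisms $\ep\cl\Phi_w(S)\conv Y\to\triv$ and $\eta\cl\triv\to Y\conv\Phi_w(S)$. It remains to check the two zigzag identities for $(\ep,\eta)$, after possibly rescaling $\eta$. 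Since $\lRg$ is abelian of finite length over the field $\cor$ and its simple objects are absolutely simple, the two zigzag composites $\Phi_w(S)\to\Phi_w(S)$ and $Y\to Y$ are scalars; they are nonzero because, unwinding the definitions, each is expressed through $R$-matrices involving $\dC_\La$, which become isomorphisms in $\lRg$ by Theorem~\ref{Thm: R Ci iso} (here one uses the normality and uniqueness of $R$-matrices, Proposition~\ref{prop:ru}). Rescaling $\eta$ by the ratio of the two scalars turns $(\ep,\eta)$ into an adjunction, so $Y$ is a right dual of $\Phi_w(S)$.

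The main obstacle is the complementation lemma: producing one weight $\La$ and one simple $T\in\catC_w$ complementary to the given $S$ \emph{simultaneously} from the left (head $S\hconv T\simeq\dC_\La$) and from the right ($\dC_\La$ in the socle of $T\conv S$), together with the compatibility that feeds the zigzag step. I would prove it by induction on $\Ht(\beta)$, the case $\beta=0$ being trivial with $T=\dC_\La$; in the inductive step one peels off one determinantial factor of $\dC_\La$ against $S$ using the $E^{*}_i$-recursions of Proposition~\ref{Prop: dM properties}(ii)--(iii) and the normality of $R$-matrices, while keeping the complement inside $\catC_w$ by controlling $\gW$ via Proposition~\ref{prop:W}. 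A subsidiary point, also needed, is that $\Hom_{\lRg}(\Phi_w(S)\conv\Phi_w(T),\dC_\La)$ is one-dimensional, which reduces by adjunction and exactness of $\Phi_w$ to the corresponding statement in $\catC_w$ — again a consequence of $R$-matrix normality.
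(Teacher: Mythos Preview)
The paper does not prove this statement; it is quoted verbatim from \cite[Theorem~5.7]{KKOP21} as a preliminary result, so there is no in-paper proof to compare against.

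Your proposal has the right architecture --- reduce to $\Phi_w(S)$ for simple $S\in\catC_w$, then exhibit a complement $T$ so that $\Phi_w(T)\conv\dC_\La^{\conv-1}$ is the right dual --- but the two load-bearing steps are not established. First, the ``complementation lemma'' is only asserted. Your sketched induction on $\Ht(\beta)$ is not a proof: the $E_i^*$-recursions of Proposition~\ref{Prop: dM properties} describe how \emph{determinantial} modules behave under $E_i^*$, not how to strip a layer off an arbitrary simple $S\in\catC_w$ while producing a simple $T\in\catC_w$ satisfying both $S\hconv T\simeq\dC_\La$ and $\dC_\La\simeq T\sconv S$. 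Producing such a $T$ is exactly the content of the theorem and requires a concrete construction (in \cite{KKOP21} this is done via the PBW/dual-PBW factorization of simples in $\catC_w$ attached to a reduced expression of $w$); nothing in your outline furnishes it.

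Second, even granting the lemma, your nonvanishing argument for the zigzag is a slogan, not a proof. What is actually needed is that the composite $S\conv T\xrightarrow{\varpi}\dC_\La\xrightarrow{\iota}T\conv S$ equals $\rmat{S,T}$ up to a nonzero scalar; only then does the zigzag in $\lRg$ unwind to a composite of $\rmat{S,T}$, $\rmat{T,S}$ and the centrality isomorphism $\coR_{\dC_\La}(S)$, and only then does Theorem~\ref{Thm: R Ci iso} give nonvanishing. You cite Proposition~\ref{prop:ru} for this identification, but that proposition requires one of $S,T$ to be \afr, which you have not checked (and which is not automatic for an arbitrary simple in $\catC_w$). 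Without this identification, having nonzero $\varpi$ and $\iota$ separately does not force the zigzag composite to be nonzero.
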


Applying Theorem \ref{Thm: graded localization} and
Proposition~\ref{Prop: canonical braiders} again, we obtain the localization 
$$\tRm[w]\seteq R\gmod [ \dC_i^{\conv -1 } \mid i \in I ]$$ of  the category  $R\gmod$ by the  real commuting family  $\set{\dC_i}{i\in I}$ of non-degenerated graded braiders. 
We denote  the localization functor by 
$$Q_w\cl R\gmod \to \tRm[w].$$
Since $\catC_w$ is a full subcategory of $R\gmod$, there is a fully faithful monoidal functor 
$$\iota_w \cl \lRg \to \tRm[w].$$

Set $I_w\seteq\set{i\in I}{w\La_i\neq \La_i}$. Note that $I_w=\{i_1,\ldots,i_l\}$ for any reduced expression $\underline{w}=s_{i_1}\cdots s_{i_l}$ of $w$.
\begin{thm}[{\cite[Theorem 5.8, Theorem 5.9]{KKOP21}}] \label{thm: Cwequiv}
Assume that $I=I_w$.
\bnum
\item The functor $\iota_w \cl \lRg \to \tRm[w]$ is an equivalence of categories.
\item  The category $\lRg$ is left rigid, i.e., every object of $\lRg$ has a left dual.
\end{enumerate}
\end{thm}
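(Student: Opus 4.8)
The plan is to establish (i) first and then to deduce (ii) from it together with \cite[Theorem~5.7]{KKOP21}, recalled just above, that every simple object of $\lRg$ has a right dual. Since $\iota_w$ is fully faithful and exact (exact because it is induced by the exact inclusion of the Serre subcategory $\catC_w\subset R\gmod$), part (i) amounts to essential surjectivity. By the description of a localization recalled above, every object of $\tRm[w]$ is isomorphic, up to a grading shift, to $Q_w(M)\conv\dC_\gamma^{\conv -1}$ for some $M\in R\gmod$ and $\gamma\in\lG_{\ge0}$, and $\dC_\gamma^{\conv -1}=\iota_w\bl\Phi_w(\dC_\gamma)^{\conv -1}\br$ lies in the essential image of $\iota_w$. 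Since that essential image is stable under grading shifts, under $\conv$, and (being induced from a Serre subcategory) under extensions, and since $R\gmod$ has finite length, it is enough to prove that $Q_w(M)$ lies in the essential image of $\iota_w$ for every \emph{simple} $M\in R\gmod$. Then $Q_w(M)$ is $0$ or simple, and since every simple object of $\lRg$ has the form $\dC_\La\conv\Phi_w(S)$ with $S\in\catC_w$ simple and $\La\in\wlP$, the whole of (i) reduces to the following claim: for every simple $M$ with $Q_w(M)\ne 0$ there exist a simple $S\in\catC_w$ and $\La\in\wlP$ with $Q_w(M)\simeq\dC_\La\conv Q_w(S)$ in $\tRm[w]$.

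I would attack this claim in two steps. First, identify $\ker Q_w$: for a self-dual simple $S(b)$, $b\in B(\infty)$, one shows $Q_w(S(b))\ne0$ if and only if $b$ lies in the Demazure-type subcrystal $B_w(\infty)\subset B(\infty)$, arguing by induction along a reduced expression $w=s_{i_1}\cdots s_{i_\ell}$ using the braiders $R_{\dC_i}$ and the functors $E_i^{*}$. Second, for $b\in B_w(\infty)$, produce $S$ and $\La$ by induction on $\Ht(-\wt b)$: combining Proposition~\ref{Prop: dM properties} (the $E_i^{*}/F_i^{*}$-behaviour of determinantial modules and the multiplicativity $\dC_\La\conv\dC_{\La'}\simeq\dC_{\La+\La'}$ up to a shift) with Theorem~\ref{Thm: R Ci iso} (so that each $\dC_i$ acts centrally on $\catC_w$), one shows that a suitably chosen twist $\dC_\La^{\conv -1}\conv Q_w(M)$ becomes isomorphic \emph{inside $\tRm[w]$} to $Q_w$ of a genuine module of $\catC_w$.

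The step I expect to be the main obstacle is exactly this last passage. The subcategory $\catC_w$ is cut out by the support condition $\gW(N)\subset\Sp(\prD\cap w\nrD)$, and $\gW(\dC_\La\conv N)$ can be strictly larger than $\gW(N)$; so one cannot move $M$ into $\catC_w$ by multiplying by determinantial modules inside $R\gmod$, and the argument must genuinely take place in $\tRm[w]$, where $\dC_i$ is invertible, the point being that inverting $\dC_\La$ cancels precisely the part of the support lying outside the cone $\Sp(\prD\cap w\nrD)$. This is possible only because the hypothesis $I=I_w$ --- which, since $I_w=\supp(w)$, says $\supp(w)=I$ --- forces $\SpR(\prD\cap w\nrD)=\R\otimes_\Z\rlQ$; without it, a simple module whose weight is not in this span yields an object of $\tRm[w]$ outside the essential image of $\iota_w$, so the hypothesis is genuinely needed. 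Tracking the grading shifts through the inductions is the other delicate bookkeeping point.

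For (ii), transport along the equivalence $\iota_w\colon\lRg\isoto\tRm[w]$ from (i) and work in $\tRm[w]$. By \cite[Theorem~5.7]{KKOP21} every simple object has a right dual; I would extend this to all objects, either by showing that the full subcategory of objects admitting a right dual is closed under extensions (using exactness of $\conv$) or, more concretely, by the determinantial-module construction: for $M\in R(\beta)\gmod$ and $\La\in\wlP_+$ sufficiently dominant relative to $\beta$ --- possible exactly because $I=I_w$ --- there is a nonzero (hence surjective) homomorphism $M\conv M^{!}\twoheadrightarrow\Mm(w\La,\La)$ for a suitable module $M^{!}$ obtained from $M$ by a symmetry such as $\psi_*$ or $(-)^{\star}$, together with a companion inclusion $\Mm(w\La,\La)\hookrightarrow M^{!}\conv M$, and after inverting the central object $\dC_\La\simeq Q_w(\Mm(w\La,\La))$ these furnish the evaluation and coevaluation data for a right dual of $Q_w(M)$, the triangle identities reducing to compatibilities among $R$-matrices (this is closely related to the computations developed later in this paper for the generalized determinantial modules). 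Since an arbitrary object of $\tRm[w]$ is, up to a shift, $Q_w(M)\conv\dC_\gamma^{\conv -1}$ with $\dC_\gamma^{\conv -1}$ invertible, every object then has a right dual, so the right-dual functor is an anti-auto-equivalence of $\tRm[w]$ (one checks the double right dual is the identity up to the grading normalization recalled above); its quasi-inverse sends each object to a left dual of that object, and pulling back through $\iota_w^{-1}$ gives the left rigidity of $\lRg$.
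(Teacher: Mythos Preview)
This theorem is not proved in the present paper; it is quoted from \cite[Theorems~5.8, 5.9]{KKOP21}, so there is no proof here to compare your sketch against directly. Two remarks nonetheless.

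For (i), your route through the identification of $\ker Q_w$ with $B(\infty)\setminus B_w(\infty)$ is reasonable in spirit, but note that this identification is established freshly in \emph{this} paper (Propositions~\ref{prop:binBw}--\ref{prop:kernel}), which strongly suggests it was not the argument used in \cite{KKOP21}. Your acknowledged ``main obstacle''---producing, for simple $M$ with $Q_w(M)\not\simeq0$, a simple $S\in\catC_w$ and $\La$ with $Q_w(M)\simeq\dC_\La\conv Q_w(S)$---remains only a gesture: you do not explain how the support condition $\gW(S)\subset\Sp(\prD\cap w\nrD)$ is actually secured by the induction you propose.

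For (ii), there is a genuine gap. Your plan is to pass from right duals (first for simples via \cite[Theorem~5.7]{KKOP21}, then for all objects) to left duals by asserting that ``the double right dual is the identity up to the grading normalization''. But that assertion is precisely what would need to be proved, and you offer no mechanism for it. In general, right rigidity does not imply left rigidity: if $Y$ is a right dual of $X$ then $X$ is automatically a \emph{left} dual of $Y$, not a right dual, so $(X^*)^*\simeq X$ is an extra condition, not a consequence. Your explicit construction---maps $M\conv M^{!}\to\dC_\La$ and $\dC_\La\to M^{!}\conv M$---exhibits $M^{!}$ (suitably twisted) as a \emph{right} dual of $M$; to get a \emph{left} dual of $M$ you would need maps in the opposite pattern, $M^{!}\conv M\to\dC_{\La'}$ and $\dC_{\La'}\to M\conv M^{!}$, and nothing you wrote produces these or verifies their triangle identities. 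The proof in \cite{KKOP21} constructs left duals directly rather than bootstrapping from right duals; indeed, the present paper obtains right rigidity only later (Theorem~\ref{th:rigid}) by an entirely different method, the monoidal equivalence $\tCw\simeq(\tCw[{w^{-1}}])^{\rev}$.
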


We shall prove later in Theorem~\ref{th:rigid} that $\lRg$ is rigid.

\Prop 
\label{prop:Locsim}
Assume that $I=I_w$.
Let $X\in R\gmod$ be a simple module.
Then we have
\bnum
\item $Q_w(X)$ is either a simple module or zero,
\item  $Q_w(X)\simeq q^{H(\La,\La)}\dC_{-\La}\conv Q_w(\dC_\La\hconv X)$ 
for any $\La\in\pwtl$.
\ee
\enprop
\Proof
(i) follows from  {\cite[Proposition 4.8 (i)]{KKOP21}}. 

\snoi
(ii)\ Since there is an epimorphism $Q_w(\dC_\La) \conv Q_w(X)  \epito Q_w(\dC_\La\hconv X)$, we may assume that  $Q_w(X)\not\simeq 0$.  

Applying the exact monoidal functor $Q_w$,
$$\dC_\La\conv X\epito\dC_\La\hconv X\monoto X\conv\dC_\La,$$
we obtain
$$Q_w(\dC_\La)\conv Q_w(X)\epito Q_w(\dC_\La\hconv X)\monoto Q_w(X)\conv Q_w(\dC_\La)$$
whose composition is an isomorphism.
Hence we obtain
$Q_w(\dC_\La\hconv X)\simeq Q_w(\dC_\La)\conv Q_w(X)$.
\QED

\section{Normal sequences and Generalized determinantial modules}

\subsection{Unmixed pair} \label{subsubsec:unmixed}
We say that an ordered pair $(M,N)$ of $R$-modules is {\em unmixed}
if $$\sgW(M)\cap\gW(N)\subset\{0\}.$$ 

\Prop [{\cite[Lemma 2.6]{TW16},  \cite[Proposition 2.12]{KKOP18}}]  \label{prop:unmixedr}
Let $\beta,\gamma\in\prtl$ with $\height{\beta}=m$ and $\height{\gamma}=n$.
Let $M$ and $N$ be an $R(\beta)$-module and an $R(\gamma)$-module, respectively.
 Assume that $(M,N)$ is an unmixed pair. 
Then, we have $e(\beta,\gamma)(M\conv N)=M \etens N$ and 
$e(\beta,\gamma)(N\conv M)=\tau_{w[n,m]}(N\etens M)$.
There is an $R(\beta)\tens R(\gamma)$-module isomorphism
  $q^{-(\beta,\gamma)}M\tens N \to e(\beta,\gamma)(N\conv M)$ 
  given by
$$r(u\tens v)=\tau_{w[n,m]}(v\tens u)\qt{for any $u\in M$ and $v\in N$.}$$
In particular, it induces a homomorphism
$r\cl M\conv N\to q^{(\beta,\gamma)}  N\conv M$. 
\enprop
We denote by $\rmat{M,N}$ the above morphism  $r$.

\Prop [{\cite[Lemma 2.6]{TW16} }] \label{cor:resthd}  
Let $(M,N)$ be an unmixed pair of simple $R$-modules. 
Then we have
$$\HOM(M\conv N,N\conv M)=\corp\,\rmat{M,N}.$$
Moreover, the image of  $\rmat{M,N}\cl M\conv N\to q^{(\beta,\gamma)}  N\conv M$ 
is simple and isomorphic to 
$M\hconv N$ and $q^{(\beta,\gamma)}N\sconv M$.
\enprop
\begin{proof}
The first assertion follows from \eqref{eq:adjoints}.
For any non-zero submodule $S$ of $N\conv M$, we have $e(\beta,\gamma) S=\tau_{w[n,m]}(N\etens M)$  by \eqref{eq:adjoints} and Proposition \ref{cor:resthd}.
It follows that $N\conv M$ has a simple socle which is generated by $\tau_{w[n,m]}(N\etens M)$.
Since the image of $\rmat{M,N}$ is generated by $\tau_{w[n,m]}(N\etens M)$, we get the second assertion.
\end{proof}

\Cor \label{cor:unmixedtLa}
Let $(M,N)$ be an unmixed pair of simple modules such that
one of them is \afr. 
Then we have
$$\tLa(M,N)=0.$$
\encor
\begin{proof}
Let $r$ be the morphism in the above proposition. Since $r=\rmat{M,N}$ up to a constant multiple,  we have $\La(M,N)=-(\beta,\gamma)$. It follows that $\tLa(M,N)=\frac{1}{2}(\La(M,N)+(\beta,\gamma))=0$, as desired.
\end{proof}

\Prop\label{prop:varunmixed}
Let $\al,\beta,\gamma\in\prtl$, and let $L$ be an $R(\al)$-module, $M$ an $R(\beta)$-module
and $N$ an $R(\gamma)$-module.
Assume that
$$\bl\sgW(L)+\sgW(M)\br\cap\gW(N)=\st{0}.$$
Then we have
\eq
&&e(\al+\beta,\gamma)\bl L\conv M\conv N\br\simeq (L\conv M)\tens N,\label{eq:20}\\
&&e(\al+\beta,\gamma)\bl L\conv N\conv M\br\simeq q^{-(\beta,\gamma)}(L\conv M)\tens N,\label{eq:22}\\
&&e(\al+\beta,\gamma)\bl N\conv L\conv M\br\simeq q^{-(\al+\beta,\gamma)}(L\conv M)\tens N.\label{eq:23}
\eneq
Assume further that $L,M,N$ are simple. Then we have
\eq
&&e(\al+\beta,\gamma)\bl L\conv (M\hconv N)\br\simeq (L\conv M)\tens N,\label{eq:21}\\
&&e(\al+\beta,\gamma)\bl(L\hconv N)\conv M\br\simeq q^{-(\beta,\gamma)}(L\conv M)\tens N.\label{eq:24}
\eneq
\enprop

\Proof
The isomorphisms \eqref{eq:20} and \eqref{eq:23}
follow from Proposition~\ref{prop:unmixedr} since
$\sgW(L\conv M)\cap\gW(N)\subset\st{0}$.

\medskip
Let us prove the second isomorphism \eqref{eq:22}.
By the shuffle lemma (Proposition~\ref{prop: shuffle lemma}),
the $R(\al+\beta)\tens R(\gamma)$-module $e(\al+\beta,\gamma)L\conv N\conv M$
has a filtration whose graduations are of the form
$$G\seteq \bl R(\al+\beta)e(\al_1,\beta_1,\gamma_1)\tens R(\gamma)
e(\al_2,\beta_2,\gamma_2)\br
\tens_A\bl e(\al_1,\al_2)L\tens e(\gamma_1,\gamma_2)N\tens
e(\beta_1,\beta_2)M\br$$
up to grade shifts.
Here 
\be[{$\bullet$}]
\item
$\al_k,\beta_k,\gamma_k\in\prtl$ ($k=1,2$) such that $\al=\al_1+\al_2$, $\beta=\beta_1+\beta_2$, $\gamma=\gamma_1+\gamma_2$,
$\al+\beta=\al_1+\beta_1+\gamma_1$ and $\gamma=\al_2+\beta_2+\gamma_2$,
\item $A=R(\al_1)\tens R(\al_2)\tens R(\gamma_1)\tens R(\gamma_2)\tens
R(\beta_1)\tens R(\beta_2)$,
\item $A\to  R(\al+\beta)\tens R(\gamma)$ is given by
$a_1\tens a_2\tens c_1\tens c_2\tens b_1\tens b_2\mapsto (a_1c_1b_1)
\tens (a_2c_2b_2)$.
\ee
If $G\not=0$, then $\al_2\in\sgW(L)$ and $\beta_2\in\sgW(M)$,
$\gamma_1\in\gW(N)$.
Since $\al_2+\beta_2=\gamma_1$, we have
$\al_2=\beta_2=\gamma_1=0$.
Hence only one $G$ survives, and
we have
$$G=\bl R(\al+\beta)e(\al,\beta)\tens R(\gamma)\br\stens_{R(\al)\tens R(\gamma)\tens R(\beta)} (L\tens N\tens M),$$
where
$R(\al)\tens R(\gamma)\tens R(\beta)\to R(\al+\beta)\tens R(\gamma)$
is given by
$a\tens c\tens b\mapsto (a\cdot b)\tens c$.
Hence we obtain
$G\simeq(L\conv M)\tens N$.
It implies the isomorphism \eqref{eq:22}.
Note that
$$q^{-(\beta,\gamma)}(L\conv M)\tens N 
\to e(\al+\beta,\gamma)\bl L\conv N\conv M\br$$
is induced by
$q^{-(\beta,\gamma)}L\tens M\tens N
\to L\tens(N\conv M)$.

\medskip

Finally, let us show the isomorphisms  \eqref{eq:21} and \eqref{eq:24}.

We have commutative diagrams
$$\xymatrix@C=2.5ex{
e(\al+\beta,\gamma)\bl L\conv M\conv N\br\ar@{->>}[r]\ar[d]^\bwr&
e(\al+\beta,\gamma)\bl L\conv (M\hconv N)\br\akew[1ex]\ar@{>->}[r]&e(\al+\beta,\gamma)q^{(\beta,\gamma)}\bl L\conv N\conv M\br\ar[d]^\bwr\\
(L\conv M)\tens N\ar[rr]^\sim&&
q^{(\beta,\gamma)-(\beta,\gamma)}(L\conv M)\tens N,}
$$
$$\xymatrix@C=2.5ex{
e(\al+\beta,\gamma)\bl L\conv N\conv M\br\ar@{->>}[r]\ar[d]^\bwr&
e(\al+\beta,\gamma)\bl(L\hconv N)\conv M\br\akew[1ex]\ar@{>->}[r]&e(\al+\beta,\gamma)q^{(\al,\gamma)}\bl N\conv L\conv M\br\ar[d]^\bwr\\
q^{-(\beta,\gamma)}(L\conv M)\tens N\ar[rr]^\sim&&
q^{(\al,\gamma)-(\al+\beta,\gamma)}(L\conv M)\tens N.}
$$
Here the vertical arrows are isomorphisms by \eqref{eq:20},
\eqref{eq:22} and \eqref{eq:23}.
Hence we obtain \eqref{eq:21} and \eqref{eq:24}.
\QED

\Prop
Let $M$ be an \afr simple module, and $L$ an $R$-module.
We assume that 
$$\text{$\La(M,S)=\La(M,L)$ for any simple quotient $S$ of $L$.}$$
Then the head of $M\conv \hd(L)$ is equal to the head of
$M\conv L$.
\enprop
\Proof
Note that, for any simple quotient $S$,
the following diagram commutes up to a constant multiple
by \cite[Proposition 3.2.8]{KKKO18}:
$$\xymatrix{
M\conv L\ar@{->>}[d]\ar[r]^{\rmat{M,L}}&L\conv M\ar@{->>}[d]\\
M\conv S\ar[r]^{\rmat{M,L}}&S\conv M.
}
$$
In particular, we have
\eq&&\text{the composition
$M\conv L\To[{\rmat{M,L}}]L\conv M\to S\conv M$ does not vanish.}\label{eq:nonvan}
\eneq

Let $K$ be a maximal submodule of $M\conv L$.
In order to see the proposition, it is enough to show that
$M\conv T\subset K$ for some maximal module $T$ of $L$.
 Let us consider the following commutative diagram
where $r$ is the $R$-matrix $\rmat{M,L}\cl M\conv L\to L\conv M$: 
$$\xymatrix@C=8ex{
M\conv K\ar[d]\ar[rr]&&K\conv M\ar[d]\\
M\conv M\conv L\ar[r]^{\rmat{M,M}}&
M\conv M\conv L\ar[r]^{M\circ\rmat{}}&M\conv L\conv M\,.}
$$
Hence we have
$M\conv\rmat{}(K)\subset K\conv M$.
Hence there exists a submodule $P\subset L$
such that $\rmat{}(K)\subset P\conv M$ and
$M\conv P\subset K$.
Hence $ P\not= L$. Let us take a maximal submodule $T\subsetneqq L$
such that $P\subset T$.
Since the composition
$M\conv L\To[{\rmat{}}]L\conv M\to (L/T)\conv M$
does not vanish by \eqref{eq:nonvan}, 
$K\subset \rmat{}^{-1}(T\conv M)\not=M\conv L$.
Hence we obtain
$\rmat{}^{-1}(T\conv M)=K$. 
Hence
$M\conv T\subset\rmat{}^{-1}(T\conv M)=K$.
\QED

\Cor
Let $N_j$ $(j=1,\ldots n)$ be a simple module,
and set $L\seteq N_1\conv\cdots\conv N_n$.
Let $M$ be an \afr simple module.
We assume that 
$M$ and $N_j$ commute.
Then $M\conv \hd(L)$ is semisimple and is equal to the head of
$M\conv L$.
\encor

\begin{proof}
By\ \cite[Proposition 3.2.10]{KKKO18}, 
we have $\La(M,S) = \sum_{i=1}^n \La(M,N_i)$ for any  quotient  $S$ of $L$. 
Note that every simple quotient of $L$ commutes with $M$. 
Thus $M\conv \hd(L)$ is semisimple.
Then by the proposition above,
\eqn
M\conv \hd(L) \simeq \hd(M\conv \hd(L)) = \hd(M\conv L),
\eneqn
 as desired. \end{proof}

\subsection{Normal sequences}
\begin{df}
 Let $(M_1,\ldots,M_r)$ be a sequence  of simple modules  in 
$R \gmod$ such that   $M_k $  is \afr except for  possibly one $k$. 
The sequence $(M_1,\ldots,M_r)$
is called a \emph{normal sequence} 
if   the composition of r-matrices 
\eqn \rmat{M_1,\ldots,M_r}\seteq
&&(\rmat{M_{r-1},M_r})  \circ \cdots \circ (\rmat{M_2,M_r}\circ \cdots \circ \rmat{M_2,M_3})  \circ (\rmat{M_1,M_r} \circ \cdots  \circ \rmat{M_1,M_2}) 
\\
  &&\cl M_1\conv \cdots \conv M_r \longrightarrow M_r \conv \cdots \conv  M_1
\eneqn
does not vanish.
\end{df}

Note that if $(M_1,\ldots,M_r)$ is a normal sequence,
then
$\Im (\rmat{M_1,\ldots,M_r})$ is simple, and it is isomorphic to the head of $M_1\conv \cdots\conv  M_r$
and to the socle of $M_r\conv \cdots\conv  M_1$.

\Lemma [{\cite[Lemma 2.7,  Lemma 2.8]{KK19}}] \label{lem:normal}
 Let $(L_1,\ldots,L_r)$ be a sequence  of  simple modules  in 
$R \gmod$ such that   $L_k $  are \afr except  for  possibly one $k$. 
Then  the following three conditions  are equivalent.
\bna
\item
$(L_1,\ldots,L_r)$    is a normal sequence. 
\item
$(L_2,\ldots,L_r)$ is a normal sequence and 
$$\La(L_1, \hd(L_2\conv\cdots \conv L_r)) = \sum_{2\le j\le r} \La(L_1,L_j).$$
\item
$(L_1,\ldots,L_{r-1})$ is a normal sequence and 
$$\La(\hd(L_2\conv\cdots \conv L_{r-1}), L_r) = \sum_{1\le j\le r-1} \La(L_j,L_r).$$
\end{enumerate}
\enlemma

\Prop
Let $M_j$ $(j=1,\ldots n)$ be an \afr module.
We assume that
\eq&&\text{$\tLa(M_j,M_k)=0$ if $1\le j<k\le n$ and $3\le k$.}
\label{cond:tL}
\eneq
Then $(M_1,\ldots,M_n)$ is a normal sequence.
\enprop
Remark that if $(M_1,\ldots, M_n)$ satisfies condition \eqref{cond:tL}, then
$(M_1^{\circ m_1},\ldots, M_n^{\circ m_n})$ also satisfies \eqref{cond:tL}
for any $m_1\ldots, m_n\in\Z_{\ge0}$.
\Proof
Let us show it by induction on $n$.
We may assume that $n\ge3$.
We have $$0\le\tLa(\hd(M_1\conv\cdots\conv M_{n-1}),M_n)\le\sum_{i=1}^{n-1}\tLa(M_i, M_n)=0,$$
which implies
$$\La(\hd(M_1\conv\cdots\conv M_{n-1}),M_n)=
\sum_{i=1}^{n-1}\La(M_i, M_n).$$
Now the  conclusion follows from  Lemma \ref{lem:normal}.
\QED

For $m,n\in\Z_{\ge1}$ and $v\in\Sym_m$ and $w\in\Sym_n$, let $v\cct w$ be the element of $\Sym_{m+n}$ defined by
$$\bl v\cct w\br (k)=
\bc v(k)&\text{if $1\le k\le m$,}\\
w(k-m)+m&\text{if $m<k\le m+n$.}
\ec$$

Let $\ble$ be the Bruhat order on $\sym_m$. 
For $1\le k < m$ and $w\in\sym_m$,  we have
\eqn
s_k w \ble w \quad \text{if and only if} \quad w^{-1} (k) > w^{-1} (k+1).
\eneqn
Note also that
\eqn
\text{if $u \ble w$,  $s_k w \ble w $ and $ u \ble s_k u$, then
$u \ble s_kw$ and $s_k u \ble w$.}
\eneqn

We denote by $\sym_{\ell,m}$ the set of minimal length left coset representatives in $\sym_{\ell+m}$ with respect to the subgroup  
$\sym_\ell \times \sym_m$.
Namely, 
\eqn
\sym_{\ell,m}
=\bigl\{w \in \sym_{\ell+m}\bigm|\text{$w$ is increasing on $[1,\ell]$ and on $[\ell+1,\ell+m]$}\bigr\}.
\eneqn

The following lemma will be used in the proof of Theorem~\ref{th:strong},
one of the main results of this section.
In the lemma, $\one_n$ denotes the unit of $\Sym_n$.

\begin{lem}\label{sublem:emn}
Let $\ell, m,n\in\Z_{>0}$,  $w\in\Sym_{m,\ell}$, $v\in\Sym_{n,\ell}$.
Set $v_0=w[n,\ell]$.
 Then one has
\bnum
\item\label{item:a}
if $1\le k<m+\ell$ satisfies $s_kw\ble w$,
then 
\bna
\item $w^{-1}(k+1)\le m<w^{-1}(k)\le m+\ell$,
\item $s_kw\in\Sym_{m,\ell}$,
\item 
$(s_kw\cct \one_n)\cdot(\one_m\cct  v)
\ble(w\cct \one_n)\cdot(\one_m\cct  v)$,
\ee
\item $\ell\bl(w\cct \one_n)\cdot(\one_m\cct  v)\br
=\ell(v)+\ell(w)$, \label{item:additive}
\item $\one_m\cct  v\ble(w\cct \one_n)\cdot(\one_m\cct  v)$,
\label{item:c}
\item $(w\cct \one_n)\cdot(\one_m\cct  v_0)\in\Sym_{m+n,\ell}$,
\label{item mnl}
\item
If $\one_m\cct v_0\ble(w\cct \one_n)\cdot(\one_m\cct  v)$,
then $v=v_0$.
\label{item:v0}
\ee
\end{lem}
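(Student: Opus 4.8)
The plan is to work throughout inside $\Sym_N$ with $N\seteq m+n+\ell$, viewing $\Sym_{m+\ell}$ (resp.\ $\Sym_{n+\ell}$) as the pointwise stabilizer of $[m+\ell+1,N]$ (resp.\ of $[1,m]$); these inclusions preserve the length $\ell(\cdot)$ and are order embeddings for $\ble$. Besides the criterion $s_kx\ble x\iff x^{-1}(k)>x^{-1}(k+1)$ recalled before the lemma, I will use the length formula $\ell(xy)=\ell(x)+\ell(y)-2\,\#\bl\mathrm{Inv}(x)\cap\mathrm{Inv}(y^{-1})\br$ with $\mathrm{Inv}(x)=\{(i,j)\mid i<j,\ x(i)>x(j)\}$, together with the subword characterization of $\ble$. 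The basic bookkeeping is: $w\cct\one_n$ acts by $w$ on $[1,m+\ell]$ and fixes the rest, so $\mathrm{Inv}(w\cct\one_n)=\mathrm{Inv}(w)$, and since $w\in\Sym_{m,\ell}$ is increasing on $[1,m]$ and on $[m+1,m+\ell]$ each such inversion $(i,j)$ has $i\le m<j\le m+\ell$; dually $(\one_m\cct v)^{-1}=\one_m\cct v^{-1}$ fixes $[1,m]$ and all its inversions have both coordinates in $[m+1,N]$. One also checks at once that $x\mapsto x\cct\one_n$ and $x\mapsto\one_m\cct x$ are group homomorphisms, so that $(s_kw)\cct\one_n=s_k\cdot(w\cct\one_n)$ in $\Sym_N$ when $k<m+\ell$.

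For (i)(a): from $s_kw\ble w$ we get $w^{-1}(k)>w^{-1}(k+1)$; since $w$ is increasing on $[1,m]$ and on $[m+1,m+\ell]$, the values $k$ and $k+1$ cannot both lie in $w([1,m])$ nor both in $w([m+1,m+\ell])$, and the case $w^{-1}(k)\le m<w^{-1}(k+1)$ is excluded, leaving $w^{-1}(k+1)\le m<w^{-1}(k)\le m+\ell$. For (i)(b): $s_kw$ is obtained from $w$ by replacing the value $k+1$ at position $w^{-1}(k+1)\le m$ by $k$ and the value $k$ at position $w^{-1}(k)\in[m+1,m+\ell]$ by $k+1$; since $k\notin w([1,m])$ and $k+1\notin w([m+1,m+\ell])$, both blocks remain increasing, i.e.\ $s_kw\in\Sym_{m,\ell}$. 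For (i)(c): with $u\seteq(w\cct\one_n)\cdot(\one_m\cct v)$ one has $(s_kw\cct\one_n)\cdot(\one_m\cct v)=s_ku$, so it suffices to check $u^{-1}(k)>u^{-1}(k+1)$; from $u^{-1}=(\one_m\cct v^{-1})\cdot(w^{-1}\cct\one_n)$ and (a) one reads off $u^{-1}(k+1)=w^{-1}(k+1)\le m<m+v^{-1}\bl w^{-1}(k)-m\br=u^{-1}(k)$.

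For (ii) and (iii): the inversions of $w\cct\one_n$ (first coordinate $\le m$) and those of $(\one_m\cct v)^{-1}$ (first coordinate $>m$) are disjoint, so the length formula gives $\ell(u)=\ell(w\cct\one_n)+\ell(\one_m\cct v)=\ell(w)+\ell(v)$; consequently a reduced word of $u$ is obtained by concatenating reduced words of $w\cct\one_n$ and $\one_m\cct v$, and the subword property yields $\one_m\cct v\ble u$. For (iv): a direct computation of $u_0\seteq(w\cct\one_n)\cdot(\one_m\cct v_0)$, using $v_0(i)=i+\ell$ for $i\le n$ and $v_0(i)=i-n$ for $n<i\le n+\ell$, gives $u_0(k)=w(k)$ on $[1,m]$, $u_0(k)=(m+\ell)+(k-m)$ on $[m+1,m+n]$, and $u_0(k)=w(k-n)$ on $[m+n+1,N]$; these values increase on $[1,m+n]$ (note $u_0(m)=w(m)\le m+\ell<u_0(m+1)$) and on $[m+n+1,N]$, so $u_0\in\Sym_{m+n,\ell}$.

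Part (v) is the crux, and the step I expect to be the main obstacle. Assuming $\one_m\cct v_0\ble u$, I would exploit the length-additivity of $u=(w\cct\one_n)\cdot(\one_m\cct v)$ from (ii): via the subword property it gives a factorization $\one_m\cct v_0=x_1x_2$ with $x_1\ble w\cct\one_n$, $x_2\ble\one_m\cct v$ and $\ell(x_1)+\ell(x_2)=\ell(v_0)$; then $x_2=\one_m\cct v'$ for some $v'\ble v$, while $x_1$ fixes $[1,m]$ pointwise (as do $\one_m\cct v_0$ and $x_2$) and $[m+\ell+1,N]$ pointwise (as does $w\cct\one_n$, whence $x_1$ lies in $\langle s_1,\dots,s_{m+\ell-1}\rangle$), so $x_1$ merely permutes $[m+1,m+\ell]$. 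Comparing $(\one_m\cct v_0)(m+i)=m+v_0(i)$ with $(x_1x_2)(m+i)=x_1\bl m+v'(i)\br$ for $1\le i\le n+\ell$ and using this block structure of $x_1$ forces $v'(i)\le\ell\iff v_0(i)\le\ell$; since $\{i\mid v_0(i)\le\ell\}=[n+1,n+\ell]$, this yields $v'([1,n])=[\ell+1,n+\ell]$, so every pair $(i,j)$ with $i\in[1,n]$, $j\in[n+1,n+\ell]$ is an inversion of $v'$ and $\ell(v')\ge n\ell$. Hence $\ell(v)\ge\ell(v')\ge n\ell$, while $v\in\Sym_{n,\ell}$ forces $\ell(v)\le n\ell$ with equality only for $v=w[n,\ell]=v_0$, so $v=v_0$. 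The delicate point is precisely this: passing from the single Bruhat relation $\one_m\cct v_0\ble u$ to a factorization aligned with the given product, and then recognizing $v'$ from the block pattern of $w[n,\ell]$; everything else reduces to elementary bookkeeping once the embeddings $\Sym_{m+\ell},\Sym_{n+\ell}\hookrightarrow\Sym_N$ are in place. (Alternatively one can bypass the factorization by applying the Ehresmann tableau criterion for $\ble$ to the single index pair $(m+n,\,m+\ell)$, using the explicit one-line form of $u$.)
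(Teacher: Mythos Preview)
Your proof is correct. Parts (i) and (iv) follow the same lines as the paper. The differences lie in (ii), (iii) and especially (v).

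For (ii) and (iii) the paper proceeds by induction on $\ell(w)$, repeatedly invoking (i)(c) to peel off one simple reflection at a time; you instead prove length additivity in one stroke by observing that $\mathrm{Inv}(w\cct\one_n)$ and $\mathrm{Inv}((\one_m\cct v)^{-1})$ are disjoint, and then read (iii) off the subword property. For (v) the contrast is sharper. The paper argues inductively on $\ell(w)$: given $s_kw\ble w$, (i)(c) yields $s_kx\ble x$ where $x=(w\cct\one_n)(\one_m\cct v)$; a direct check shows $\one_m\cct v_0\ble s_k(\one_m\cct v_0)$, and then the lifting property of the Bruhat order (recorded before the lemma) gives $\one_m\cct v_0\ble s_kx=(s_kw\cct\one_n)(\one_m\cct v)$, so the induction hypothesis for $s_kw$ forces $v=v_0$. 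Your argument is non-inductive: from the length additivity in (ii) you extract a factorization $\one_m\cct v_0=x_1x_2$ with $x_1\ble w\cct\one_n$, $x_2=\one_m\cct v'\ble\one_m\cct v$ via the subword property; since $x_1$ lies in $\langle s_1,\dots,s_{m+\ell-1}\rangle$ and fixes $[1,m]$ it can only permute $[m+1,m+\ell]$, which forces $v'([1,n])=[\ell+1,n+\ell]$ and hence $\ell(v)\ge\ell(v')\ge n\ell$, saturating the bound $\ell(v)\le n\ell$ for $v\in\Sym_{n,\ell}$. This route avoids the Bruhat lifting lemma but needs the factorization trick and the identification of the unique maximal-length coset representative; the paper's route is more uniform with its inductive machinery for (ii)--(iii) but relies on the auxiliary order property. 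Both are clean; yours is arguably more self-contained.
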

\Proof
\eqref{item:a}\ Let us set
$a\seteq w^{-1}(k+1)<b\seteq w^{-1}(k)$.
Then we have
$w(b)<w(a)$.
Hence we have $a\le m<b$.
We have
\eqn
s_kw(i)&&=\bc
w(i)<k&\text{if $1\le i<a$,}\\
 k &\text{if $i=a$,}\\
w(i)>  k+1 &\text{if $a<i\le m$,}\\
w(i)< k &\text{if $m<i<b$,}\\
k+1 &\text{if $i=b$,}\\
w(i)> k&\text{if $b<i\le m+\ell$.}\\
\ec
\eneqn
Hence we have $s_kw\in\Sym_{m,\ell}$.

Now we have
\eqn
(\one_m\cct v)^{-1} \cdot (w\cct\one_n)^{-1}(k)
&&=(\one_m\cct v^{-1})(b)>m,\\
(\one_m\cct v)^{-1} \cdot (w\cct\one_n)^{-1}(k+1)
&&=(\one_m\cct v^{-1})(a)=a\le m.
\eneqn
Hence we have
$(s_kw\cct \one_n)\cdot(\one_m\cct  v)
\ble(w\cct \one_n)\cdot(\one_m\cct  v)$.

\snoi
\eqref{item:additive} follows from \eqref{item:a} by induction on $\ell(w)$.

\snoi
\eqref{item:c} follows from \eqref{item:a}.

\mnoi
\eqref{item mnl}\ For $1\le k\le m+n+\ell$, we have
\eqn
(w\cct \one_n)\cdot(\one_m\cct  v_0)(k)
&&=\bc
w(k)\le m+\ell &\text{if $1\le k\le m$,}\\
k+\ell>m+\ell&\text{if $m<k\le m+n$,}\\
w(k-n)&\text{if $m+n<k\le m+n+\ell$.}
\ec
\eneqn
Hence $(w\cct \one_n)\cdot(\one_m\cct  v_0)$
is increasing on $[1,m+n]$ and $[m+n+1,m+n+\ell]$.

\mnoi
\eqref{item:v0}\ \; We shall prove it by induction on $\ell(w)$.
When $w=\one_{m+\ell}$, it is obvious. Assume that $\ell(w)>0$.
Take $k$ such that $1\le k<m+\ell$ and $s_kw\ble w$.
Set $x=(w\cct \one_n)\cdot(\one_m\cct  v)$. Then
$s_kx\ble x$ by \eqref{item:a}.

On the other hand, we have, for any $i$ such that $1\le i\le m+\ell$,
\eqn
\bl\one_m \cct w[n,\ell]\br^{-1}(i)&&=\bl\one_m\cct w[\ell,n]\br(i)\\
&&=\bc
i&\text{if $1\le i\le m$,}\\
i+n&\text{if $m<i\le m+\ell$.}\ec
\eneqn
Hence $(\one_m \cct v_0)^{-1}$ is increasing on $[1,m+\ell]$ and hence
$\one_m \cct v_0\ble s_k(\one_m \cct v_0)$.
Since $s_kx\ble x$ and $\one_m \cct v_0\ble x$,
we obtain
$$\one_m \cct v_0\ble s_kx=(s_kw\cct \one_n)\cdot(\one_m\cct  v).$$
Thus the induction hypothesis implies that $v=v_0$.
\QED

\Th \label{th:strong}
Let $L$ be an \afr simple module,
and let $M$, $N$ be simple modules.
\bnum
\item For any 
 simple quotient $S$ of $M\conv N$,
we have
$$\tLa(L,M)\le \tLa(L,S).$$
\item For any 
 simple quotient $S$ of $M\conv N$,
we have
$$\tLa(N,L)\le \tLa(S,L).$$
\item
For any 
 simple submodule $S$ of $M\conv N$,
we have
$$\tLa(L,N)\le \tLa(L,S).$$
\item
For any 
 simple submodule $S$ of $M\conv N$,
we have
$$\tLa(M,L)\le \tLa(S,L).$$
\ee
\enth

\Proof
We shall prove only (i), since the other statements are obtained from (i) by applying $\psi_*\cl (R\gmod)^\rev\isoto R\gmod$ or the duality functor $\star$.

Let $(\tL,z)$ be an affinization of $L$ with $\deg z=1$.

Let $L\in R(\al)\gmod$, $M\in R(\beta)\gmod$,
$N\in R(\gamma)\gmod$, and $\ell=\height{\al}$,
$m=\height{\beta}$, $n=\height{\gamma}$.
Set $w_0=w[m,\ell]\in\Sym_{m+\ell}$ and
$v_0=w[n,\ell]\in\Sym_{n+\ell}$.
Note that 
$R(\al+\beta)e(\beta,\al)=\sum_{w\in\Sym_{m,\ell}}\tau_w\bl R(\beta)\etens R(\al)\br$.
Write
$$\vphi_{w[m,\ell]}e(\beta,\al)=\sum_{w\in\Sym_{m,\ell}}\tau_wa_{w}^{(\beta,\al)},$$
where $a_{w}^{(\beta,\al)}\in R(\beta)\etens R(\al)\subset 
e(\beta,\al)R(\beta+\al)e(\beta,\al)$.
Similarly we define $a_{v}^{(\gamma,\al)}\in R(\gamma)\etens R(\al)$ for $v\in\Sym_{n,\ell}$.
Note that
\eq  \label{eq:av0}
a_{v_0}^{(\gamma,\al)}=\sum_{\nu\in I^\gamma,\;\mu\in I^\al}\hs{3ex}
\Bigl(\prod_{\substack{1\le i\le n<j\le n+\ell,\\
\nu_i=\mu_{j-n}}}(x_i-x_j)\Bigr)e(\nu)\etens e(\mu).
\eneq
Then we have
$$\Rmat_{\tL, M}(u\etens x)
=\sum_{w\in\Sym_{m,\ell}}\tau_wa_{w}^{(\beta,\al)}(x\etens u)
\qt{for $u\in\tL$ and $x\in M$.}$$

Note that $a_{w}^{(\beta,\al)}(x\etens u)\in M\etens\tL$.

By the shuffle lemma, we have
$$M\conv \tL=\soplus\limits_{w\in\Sym_{m,\ell}}\tau_w(M\etens \tL).$$
Hence, in order to see (i), it is enough to show that
\eq
&&a_{w}^{(\beta,\al)}(M\etens \tL)\subset 
z^{-2\tLa(L,S)+\deg a_{w_0}^{(\beta,\al)}}(M\etens \tL)\label{eq:aw}
\eneq
for all $w\in \Sym_{m,\ell}$.

\medskip
Let $\xi\cl R(\beta)\etens R(\al)\to R(\al+\beta+\gamma)$
be the algebra homomorphism $b\etens a\mapsto b\etens e(\gamma)\etens a$.
Then, for $u\in\tL$, $x\in M$ and $y\in N$, we have
\eqn
&&\Rmat_{\tL, M\circ N}(u\etens x\etens y)
=\bl M\conv \Rmat_{\tL, N}\br
\Bigl(\bl\sum_{w\in\Sym_{m,\ell}}\tau_wa_{w}^{(\beta,\al)}(x\etens u)\br\etens y\Bigr)\\
&&=\sum_{w\in\Sym_{m,\ell}}\bl\tau_w\etens e(\gamma)\br
\bl e(\beta)\etens \vphi_{w[n,\ell]}\br
\xi\bl a_{w}^{(\beta,\al)}\br(x\etens y\etens u)\\
&&=\hs{-2ex}\sum_{w\in\Sym_{m,\ell},\;v\in\Sym_{n,\ell}}\hs{-2ex}\bl\tau_w\etens e(\gamma)\br
\bl e(\beta)\etens\tau_v\br \bl e(\beta)\etens a_v^{(\gamma,\al)}\br
\xi\bl a_{w}^{(\beta,\al)}\br(x\etens y\etens u).
\eneqn
Let us denote by $h\cl M\tens N\monoto S$
 the composition
$M\tens N\to M\conv N\to S$. 
It is
$R(\beta)\tens R(\gamma)$-linear and
injective,
since $M\tens N$ is a simple $R(\beta)\tens R(\gamma)$-module.

Then we have
\eqn
\Rmat_{\tL, S} \bl u\etens h(x\tens y)\br
&&=\hs{-3ex}\sum_{w\in\Sym_{m,\ell},\;v\in\Sym_{n,\ell}}\hs{-3ex}
\tau_{(w\cct \one_n)(\one_m\cct  v)}
 a_{w,v}^{(\beta,\gamma,\,\al)}(h(x\tens y)\etens u)\\
&&=\sum_{w\in\Sym_{m,\ell}}
\tau_{(w\cct \one_n)(\one_m\cct  v_0)} 
a_{w,v_0}^{(\beta,\gamma,\,\al)}(h(x\tens y)\etens u)\\
&&\hs{10ex}+\hs{-3ex}\sum_%
{\substack{w\in\Sym_{m,\ell},\\v\in\Sym_{n,\ell}\setminus\st{v_0}}}\hs{-3ex}
\tau_{(w\cct \one_n)(\one_m\cct  v)} 
a_{w,v}^{(\beta,\gamma,\,\al)}(h(x\tens y)\etens u),
\eneqn
where $ a_{w,v}^{(\beta,\gamma,\,\al)}=\bl e(\beta)\etens a_v^{(\gamma,\al)}\br
\xi\bl a_{w}^{(\beta,\al)}\br\in R(\beta)\etens R(\gamma)\etens R(\al)$.

We write $\Rnor_{\tL,S}=z^{-s} \Rmat_{\tL,S}$.
Note that
$$\tLa(L,S)=\bl\deg  a_{w_0,v_0}^{(\beta,\gamma,\,\al)}-s\br/2
=\bl \deg a_{w_0}^{(\beta,\,\al)}+\deg a_{v_0}^{(\gamma,\,\al)}-s\br/2 .$$

Thus we obtain
\eq
&&\ba{l}
\hs{3ex}\sum\limits_{w\in\Sym_{m,\ell}}
\tau_{(w\cct \one_n)(\one_m\cct  v_0)} 
a_{w,v_0}^{(\beta,\gamma,\,\al)}(h(x\tens y)\etens u)\\
\hs{20ex}
+\sum\limits_%
{\substack{w\in\Sym_{m,\ell},\\v\in\Sym_{n,\ell}\setminus\st{v_0}}}\hs{-3ex}
\tau_{(w\cct \one_n)(\one_m\cct  v)} 
a_{w,v}^{(\beta,\gamma,\,\al)}(h(x\tens y)\etens u)\\
\hs{40ex}\in z^s (S\conv \tL).
\ea
\eneq

By Lemma~\ref{sublem:emn}, we have
$(w\cct \one_n)(\one_m\cct  v)\not\succeq  \one_m\cct  v_0$
for $v\in\Sym_{n,\ell}\setminus\{v_0\}$.
Hence, we have
\eqn
&&\tau_{(w\cct \one_n)(\one_m\cct  v)} 
a_{w,v}^{(\beta,\gamma,\,\al)}\bl S\etens \tL\br
\subset \tau_{(w\cct \one_n)(\one_m\cct  v)} 
\bl S\etens \tL\br\\
&&\hs{10ex}\subset \sum_{g\in\Sym_{m+n,\ell}\;g\not\succeq\one_m\cct  v_0}
\tau_g(S\etens \tL).\eneqn
On the other hand, the shuffle lemma implies
$$S\conv \tL=\soplus\limits_{g\in\Sym_{m+n,\ell}}\tau_g(S\etens \tL).$$
By Lemma~\ref{sublem:emn}, we have
$(w\cct \one_n)(\one_m\cct  v_0)\in\Sym_{m+n,\ell}$, and
$(w\cct \one_n)(\one_m\cct  v_0)\succeq\one_m\cct  v_0$.
Hence we have
\eqn
a_{w,v_0}^{(\beta,\gamma,\,\al)}(h(x\tens y)\etens u)\in z^s (S\etens \tL)\cap&& \bl h(M\tens N)\etens \tL\br\\
&&=h(M\tens N)\etens z^s\tL\qt{for any $w\in\Sym_{m,\ell}$.}
\eneqn
Since we have $a_{v_0}^{(\gamma,\,\al)}\vert_{N\etens\tL}
\in\bR^\times z^c \id_{N\etens \tL}$  with $c=\deg a_{v_ 0}^{(\gamma,\,\al)}$ by \eqref{eq:av0}, we have
$$z^c \bl a_{w}^{(\beta,\al)}(x\etens u)\br\etens y
\in z^s (M\etens \tL\etens N).$$
Finally, we obtain
$$a_{w}^{(\beta,\al)}(x\etens u)
\in z^{s-c}(M\etens \tL)=z^{-2\tLa(L,S)+\deg a_{w_0}^{(\beta,\al)}}
(M\etens \tL).$$
It is nothing but \eqref{eq:aw}.
\QED

\Cor \label{cor:tLaMN=0}
Let $L$ be an \afr simple module, and
let $M$, $N$ be simple modules.
Let $S$ be a simple quotient of $M\conv N$.

If $\tLa(L,N)=0$, then we have
$$\tLa(L,S)=\tLa(L,M)$$
and hence
$$\La(L,S)=\La(L,M)+\La(L,N).$$
\encor
\Proof
We have
$$\tLa(L,M)\le\tLa(L,S)\le \tLa(L,M \conv N) = \tLa(L,M)+\tLa(L,N)=\tLa(L,M),$$
where the first inequality  follows form Theorem \ref{th:strong}.
Hence we have $\tLa(L,S)=\tLa(L,M)$ and $\tLa(L,S)=\tLa(L,M)+\tLa(L,N)$, which is equivalent to
$\La(L,S)=\La(L,M)+\La(L,N)$, as desired.
\QED

\Cor\label{cor:Normal} 
Let $L$  be   an \afr simple module, and
let $M$, $N$ be simple modules. Assume that $M$ or $N$ is  \afr.
\bnum
\item
If $\tLa(L,N)=0$, then
$(L,M,N)$ is a normal sequence.
\item
If $\tLa(N,L)=0$, then
$(N,M,L)$ is a normal sequence.
\ee
\encor
\Proof
(i)  By Corollary \ref{cor:tLaMN=0}, we have $\La(L,M\hconv N) = \La(L,M) +\La(L,N)$.
Hence $(L,M,N)$ is a normal sequence by Lemma \ref{lem:normal}.

(ii) A similar proof to  (i) works for (ii).
\QED

\subsection{Head simplicity of convolutions with $L(i)$}
\

\begin{df}
For $i\in I$, $\beta\in\rtlp$ and a simple $R(\beta)$-module $M$, define
$$\de_i(M)\seteq \eps_i(M)+\eps^*_i(M)+\ang{h_i,\wt(M)}.$$
\end{df}
Recall the following lemma.
\Lemma [{\cite[Corollary 3.8]{KKOP18}}]
For $i\in I$, $\beta\in\rtlp$ and a simple module $R(\beta)$-module $M$,
we have
\eqn
\tLa(L(i),M)&&=\dfrac{(\al_i,\al_i)}{2}\eps_i(M),\\
\tLa(M,L(i))&&=\dfrac{(\al_i,\al_i)}{2}\eps^*_i(M),\\
\de(L(i),M)&&=\dfrac{(\al_i,\al_i)}{2}\de_i(M).\\
\eneqn
\enlemma

For an $R(\beta)$-module $M$ and $i \in I$, set
$$\wt_i(M) \seteq\lan h_i, \wt(M)\ran =-\lan h_i, \beta \ran.$$

\Prop[{cf.\ \cite{LV11}}]\label{prop:epsBi}
Let $i\in I$ and let $M$ be a simple module.
\bnum
\item 
If $\de_i(M)=0$,
then we have $L(i)\hconv M\simeq L(i)\conv M \simeq  M\conv L(i) \simeq  M\hconv L(i)$ \ro up to a grading shift\/\rf  \,
and $\de_i(L(i)\conv M)=0$.

\noi
If $\de_i(M)>0$,
then we have
\eqn
\de_i(L(i)\hconv M)&=&\de_i(M\hconv L(i))=\de_i(M)-1,\\
\eps_i(M\hconv L(i))&=&\eps_i(M),\quad\eps^*_i(L(i)\hconv M)=\eps^*_i(M).
\eneqn
\item We have
\eq
&&\ba{l}
\de_i(L(i^n)\hconv M)=\max\bl\de_i(M)-n, 0\br,\\
\de_i(M\hconv L(i^n))=\max\bl\de_i(M)-n, 0\br.
\ea\label{eq:dei}
\eneq
\item We have
\eqn
\eps_i(M\hconv L(i^n))&&=\max\bl\eps_i(M),\; n-\wt_i (M)-\eps^*_i (M) \br,\\
\eps^*_i(L(i^n)\hconv M)&&=\max\bl\eps^*_i(M),\; n-\wt_i(M)-\eps_i(M)\br.\\
\eneqn
\ee
\enprop
\Proof
(i)\ 
The first statement follows from 
\cite[Lemma 3.2.3]{KKKO18} and \cite[{Corollary 3.18}] {KKOP21}.

\noi
Assume that $\de_i(M)>0$.
By
\cite[{Corollary 3.18}] {KKOP21},
we have $\de_i(M\hconv L(i))<\de_i(M)$.
On the other hand we have
$$\eps_i(M\hconv L(i))\ge\eps_i (M), \quad \text{and}$$
 $$ \eps^*_i(M\hconv L(i))+\wt_i(M\hconv L(i)) =\eps^*_i (M)+1 +\wt_i(M) -2 = \eps^*_i (M)+ \wt_i(M) -1. $$ 
Hence, we obtain
$\de_i\bl M\hconv L(i)\br\ge \de_i(M)-1$.
It follows that
$\de_i\bl M\hconv L(i)\br=\de_i(M)-1$ and $\eps_i(M\hconv L(i))=\eps_i( M)$.
For the statements for $L(i)\hconv M$ can be similarly proved.

\smallskip\noi
(ii) follows from (i).

\smallskip\noi
(iii)\
By (ii),  we have
$$\max(\de_i(M)-n,0)=\de_i(M\hconv L(i^n))=\eps_i(M\hconv L(i^n))+
\eps^*_i(M)+n+\wt_i(M)-2n.$$
Hence, we obtain
\eqn
\eps_i(M\hconv L(i^n))
&&=
\max\bl\eps_i(M)+\eps^*_i(M)+\wt_i(M)-n,\;0\br
-\eps^*_i(M)-\wt_i(M)+n\\
&&=\max\bl\eps_i(M),\;
-\eps^*_i(M)-\wt_i(M)+n\br.
\eneqn
The statement for $\eps_i^*$ is similarly proved.
\QED

\Th\label{th:2istring}
Let $i\in I$ and let $M$ be a simple module.
Assume that  $a,b\in\Z_{\ge0}$ satisfy
$$\de_i(M)\ge a+b.$$
Then $L(i^a)\conv M\conv L(i^b)$ has a simple head and a simple socle.
In particular, we have
$$\tF_i^a(\tF_i^*)^bM\simeq(\tF^*_i)^b\tF_i^aM.$$
\enth
\Proof
Assume that $M$ is an $R(\beta)$-module with $n=\height{\beta}$,
and set $L_1=L(i)^{\circ a}$ and $L_2=L(i)^{\circ b}$ 

\mnoi(i)\ First assume that $\eps_i(M)=\eps^*_i(M)=0$.
We shall show that
\eq\label{eq:mult}
\hs{5ex}\parbox{70ex}{the graded $\bl R(a \al_i)\etens R(\beta)\etens R(b \al_i )\br$-module $L_1\etens M\etens L_2$
appears only once in 
$e(i^a,\beta,i^b)(L_1\conv M\conv L_2)$ as a composition factor
(including the grading).}
\eneq
By the shuffle lemma, we have
$$e(i^a,\beta,i^b)(L_1\conv M\conv L_2)=\hs{-2ex}
\soplus_{\substack{w\in \Sym'_{a+n+b},\\
\nu\in I^{\beta+(a+b)\al_i}}} \hs{-2ex}  e(i^a,\beta,i^b)\tau_we(\nu)(L(i)^{\etens\; a}\etens 
M\etens L(i)^{\etens \;b}).$$
Here $ \Sym'_{a+n+b}$ is the set of $w\in \Sym_{a+n+b}$
such that $w\,\vert_{\,[a+1,a+n]}$ is increasing, 
Since $\eps_i(M)=0$ and $\eps_i^*(M)=0$, we may assume that $\nu_{a+1}\not=i$ and $\nu_{a+n}\not=i$.
We may assume $\nu_{w^{-1}k}=i$ for $k\in[1,a]\sqcup[a+n+1,a+n+b]$.
Hence we have
$$a+1\le w(a+1)\qtq w(a+n)\le a+n.$$
Hence we have
$w\vert_{[a+1,a+n]}=\id_{\,[a+1,a+n]}$.
Thus we obtain
$$e(i^a,\beta,i^b)(L_1\conv M\conv L_2)=\hs{-2ex}
 \sum  \limits_{v\in \Sym''_{a+n+b}}  \hs{-2ex} \bl R(a\al_i)\etens e(\beta)
\etens\;R( b\al_i)\br
\tau_{v}\bl L(i)^{\etens\; a}\etens 
M\etens L(i^b)^{\etens\; b}\br.$$
Here $\Sym''_{a+n+b}$ is the set
of $v\in\Sym_{a+n+b}$ such that
$v^{-1}\vert_{[1,a]}$ and $v^{-1}\vert_{[a+n+1,a+n+b]}$  are increasing and
$v\vert_{[a+1,a+n]}=\id_{[a+1,a+n]}$.
The above gives an  $R(a\al_i)\etens R(\beta)\etens R(b\al_i)$-submodule filtration of $e(i^a,\beta,i^b)(L_1\conv M\conv L_2)$ which is compatible with $\preceq$ on $ \Sym''_{a+n+b}$.

More precisely, 
we have
the following equality in the Grothendieck group of
$\bl R(a\al_i)\tens R(\beta)\tens R(b\al_i)\br\gmod$:
$$
[e(i^a,\beta,i^b)(L_1\conv M\conv L_2)]
=\sum\limits_{v\in \Sym''_{a+n+b}}\hs{-1ex} 
q^{N(v)}[L_1\tens M\tens L_2].
$$
Here, $N(v)$ is the homogeneous degree of
$e(i^a,\beta,i^b)\tau_{v}$.

For $v\in \Sym''_{a+n+b}$, let $k$ be the number of
$s\in[1,a]$ such that $v^{-1}(s)\in[a+n+1,a+n+b]$.
Then $k$ is also equal to the number of $t\in [a+n+1,a+n+b]$
such that $v^{-1}(t)\in[1,a]$.
Then $0\le k\le\min(a,b)$ and
we have
\eqn
&&[1,a]\cap v([1,a])=[1,a-k],\\
&&{[}1,a]\cap v([a+n+1,a+n+b])=[a-k+1,a],\\
&&{[}a+n+1,a+n+b]\cap v([1,a])=[a+n+1,a+n+k],\\[1ex]
&&{[}a+n+1,a+n+b]\cap v([a+n+1,a+n+b])\\
&&\hs{34ex}=[a+n+k+1,a+n+b].
\eneqn
Then the homogeneous degree $N(v)$ of
$e(i^a,\beta,i^b)\tau_{v^{-1}}$ is equal to
$$N(v)\seteq-2k(\al_i,\beta)-(\al_i,\al_i)\sharp A(v).$$
Here
\eqn
A(v)
\seteq&&
\set{(s,t)}{%
s\in[1,a],\;t\in[a+n+1,a+n+b],\;v(s)> v(t)}\\
=&&A_1\sqcup A_2\sqcup A_3
\eneqn
with
\eqn
A_1&&=
\{(s,t)\;;\;
s\in[a-k+1,a],\;t\in[a+n+k+1,a+n+b],\\
&&\hs{50.3ex}\; v(s)>v(t)\},\\[1ex]
A_2&&=\{(s,t)\;;\;
s\in[1,a-k],\;t\in[a+n+1,a+n+k],\; v(s)>v(t)\},\\
A_3&&=
\{(s,t)\;;\;
s\in[a-k+1,a],\;t\in[a+n+1,a+n+k]\}.\\
\eneqn
Hence, one has
$$\sharp A(v)=\sharp A_1+\sharp A_2+\sharp A_3
\le k(b-k)+k(a-k)+k^2=k(a+b)-k^2.$$
Since $-(\al_i,\beta)=\dfrac{(\al_i,\al_i)}{2}\de_i(M)$, we obtain
\eqn
N(v)&&=(\al_i,\al_i)\bl k\de_i(M)-\sharp A(v)\br\\
&&\ge
(\al_i,\al_i)\Bigl( k(a+b)-\bl k(a+b)-k^2\br\Bigr)=(\al_i,\al_i)k^2.
\eneqn
Hence $N(v)=0$ implies $k=0$ which is equivalent to $v=\id$.
Thus we obtain \eqref{eq:mult}.

\smallskip
Any $R$-submodule $K$ of $L_1\conv M\conv L_2$ is a proper submodule if
and only if 
$L_1\etens M\etens L_2$ does not appear in the
$\bl R(a\al_i)\etens R(\beta)\etens R(b\al_i)\br$-module $e(i^a,\beta,i^b)K$
as a composition factor.
Since the last property is stable by taking sums,
a proper maximal submodule of 
$L_1\conv M\conv L_2$ is unique, and hence
$L(i^a)\conv M\conv L(i^b)$ has a simple head. By duality, 
$L(i^a)\conv M\conv L(i^b)$ has a simple socle.

\mnoi
(ii) General case. 
Set $b'=\eps^*_i(M)$ and $a'=\eps_i(E^*_i{}^{(b')}M)$ and
$M_0=E_i^{(a')}E^{*}_i{}^{(b')}(M)$. Then $\eps_i(M_0)=\eps_i^*(M_0)=0$ and
 we have
$M\simeq \bl L(i^{a'})\hconv M_0\br\hconv L(i^{b'})$.
Then, \eqref{eq:dei} implies that
\eqn
\de_i\bl  L(i^{a'})\hconv M_0\br&&=\max\bl \de_i(M_0)-a',0\br,\\
\de_i(M)&&=\max\bl \de_i\bl  L(i^{a'})\hconv M_0\br-b',0)\\
&&=\max(\de_i(M_0)-a'-b',0).
\eneqn
Hence we obtain
$\de_i({M_0})\ge a+a'+b+b'$. Therefore, (i) implies that the convolution 
$L(i^a)\conv L(i^{a'})\conv M_0\conv L(i^{b'})\conv L(i^ {b})$ has a simple head.
Hence $L(i^a)\conv M\conv L(i^ {b})$ has a simple head.
\QED

\subsection{ Generalization of determinantial modules } 

Recall that $\la \in \wtl$ is $w$-dominant if 
$(\beta,\la)\ge0$ for any $\beta\in\prD\cap w^{-1}\nrD$.
In this case, we have $\la-w\la\in\prtl$. 

\Th \label{thm:gdm}
Let $w\in W$ and let $\la\in\wtl$.
Assume that $\la$ is $w$-dominant.
Then there exists a self-dual simple $R(\la-w\la)$-module
$\Mm_w(w\la,\la)$ which satisfies the following conditions.
\bna
\item
If $i\in I$ satisfies $\ang{h_i,w\la}\ge0$,
then $\eps_i\bl\Mm_w(w\la,\la)\br=0$.\label{item eps}

\item
If $i\in I$ satisfies $\ang{h_i,\la}\le0$,
then $\eps^*_i\bl\Mm_w(w\la,\la)\br=0$.\label{item epsstar}
\item 
If $i\in I$ satisfies $s_iw  \prec w$, then
$\Mm_w(w\la,\la)\simeq L(i^m)\hconv \Mm_{s_iw}(s_iw\la,\la)$
where $m=\ang{h_i,s_iw\la} = \eps_i(\Mm_w(w\la,\la)) \in\Z_{\ge0}$.\label{item a}
\item
If $i\in I$ satisfies $ws_i\prec w$, then
$\Mm_w(w\la,\la)\simeq \Mm_{ws_i}(w\la,s_i\la)\hconv L(i^m) $
where $m=\ang{h_i,\la} = \eps^*_i(\Mm_w(w\la,\la))\in\Z_{\ge0}  $.\label{item: sla} \

\item
For any $\mu\in\wlP_+$ such that
$\la+\mu\in\wlP_+$,  we have
$$\Mm(w\mu,\mu)\hconv \Mm_w(w\la,\la)\simeq\Mm\bl w(\la+\mu),\la+\mu\br$$
up to a grading shift.\label{item:wla}
\ee
Moreover such an $\Mm_w(w\la,\la)$ is unique up to an isomorphism.
\enth

\Proof
The uniqueness of $\Mm_w(w\la,\la)$ follows from \eqref{item:wla} together with 
\cite[Corollary 3.7]{KKKO15}.

Let us show the existence of $\Mm_{w}(w\la,\la)$ satisfying
\eqref{item eps}--\eqref{item:wla} for a $w$-dominant $\la$ by induction on $\ell(w)$.
Assume that $\ell(w)>0$.

\medskip
Take $i\in I$ such that $w'\seteq s_iw \prec w$.
Then there exists $\Mm_{w'}(w'\la,\la)$ satisfying
\eqref{item eps}--\eqref{item:wla} with $w'$ instead of $w$,  since $\la$ is $w'$-dominant. 
By \eqref{item eps}, we have
$\eps_i\bl\Mm_{w'}(w'\la,\la)\br=0$.
Set $m=\ang{h_i,w'\la}\ge0$ and
$\Mm_w(w\la,\la)\seteq L(i^m)\hconv \Mm_{w'}(w'\la,\la)$.
Then $\Mm_w(w\la,\la)$ is self-dual by \cite[Lemma 3.1.4]{KKKO18}, since $\tLa\bl L(i^m), \Mm_{w'}(w'\la,\la)\br=0$ 
 by Corollary \ref{cor:unmixedtLa}.

Let us take $\mu\in\wlP_+$ such that
$\eta\seteq \la+\mu\in\wlP_+$.
Then we have
$$\Mm(w'\mu,\mu)\hconv \Mm_{w'}(w'\la,\la)\simeq\Mm(w'\eta,\eta)
\qt{up to a grading shift}$$
by \eqref{item:wla} for $w'$.
Set $n=\ang{h_i,w'\mu}\ge0$. Note that $n$ is non-negative, since any dominant weight is  $w$-dominant.
Then $m+n=\ang{h_i,w'\eta}$.
Since $\eps_i\bl\Mm_{w'}(w'\la,\la)\br=0$ by \eqref{item eps} for $w'$, 
the triple
$\bl L(i^{m+n}),\, \Mm(w'\mu,\mu),\, \Mm_{w'}(w'\la,\la)\br$ is a normal sequence
by Corollary~\ref{cor:Normal} and Corollary~\ref{cor:unmixedtLa}.
Hence, we conclude that the convolution
$ L(i^{m+n})\conv \Mm(w'\mu,\mu)\conv \Mm_{w'}(w'\la,\la)$ has a simple head.

We have epimorphisms
\eqn
&&L(i^{m+n})\conv \Mm(w'\mu,\mu)\conv \Mm_{w'}(w'\la,\la)\\
&&\hs{10ex}\simeq
L(i^{m})\conv L(i^n)\conv\Mm(w'\mu,\mu)\conv \Mm_{w'}(w'\la,\la)\\
&&\hs{10ex}\epito
L(i^{m})\conv \Mm(w\mu,\mu)\conv \Mm_{w'}(w'\la,\la)\\
&&\hs{10ex}\underset{*}{\simeq}
\Mm(w\mu,\mu)\conv L(i^{m})\conv \Mm_{w'}(w'\la,\la)
\epito
\Mm(w\mu,\mu)\conv \Mm_w(w\la,\la)\\
&&\hs{10ex}\epito
\Mm(w\mu,\mu)\hconv \Mm_w(w\la,\la),
\eneqn
 where $\underset{*}{\simeq}$ follows from \cite[Lemma 4.9]{KKOP18}. 

On the other hand,  by \eqref{item:wla} for $w'$,  we have
\eqn
&&L(i^{m+n})\conv \Mm(w'\mu,\mu)\conv \Mm_{w'}(w'\la,\la)\\
&&\hs{10ex}\epito
L(i^{m+n})\conv \Mm(w'\eta,\eta)
\epito
 \Mm(w\eta,\eta).
\eneqn
Hence we obtain
$\Mm(w\mu,\mu)\hconv \Mm_w(w\la,\la)\simeq \Mm(w\eta,\eta)$.

Thus we obtain \eqref{item:wla}.
Since $\Mm_w(w\la,\la)$ satisfying \eqref{item:wla} is unique,
$\Mm_w(w\la,\la)$ satisfies \eqref{item a}.

\medskip
Let us show \eqref{item epsstar}.
Let us take $j\in I$ such that $w'\seteq s_jw<w$.
Set $m=\ang{h_j,w'\la} \ge 0$.
Then, by \eqref{item a} we have
$$\Mm_w(w\la,\la)\simeq L(j^m)\hconv \Mm_{w'}(w'\la,\la).$$
Hence,  if $i\not=j$, then we have
$\eps^*_i\bl\Mm_w(w\la,\la)\br=\eps^*_i\bl\Mm_{w'}(w'\la,\la)\br=0$, 
where the last equality follows from \eqref{item epsstar} for $w'$. 

\noi
Assume that $i=j$.
Since $\eps_i\bl\Mm_{w'}(w'\la,\la)\br=0$  by \eqref{item eps} for $w'$, Proposition~\ref{prop:epsBi}
implies that
\eqn
&&\eps^*_i\bl\Mm_w(w\la,\la)\br=
\eps^*_i\bl  L(i^m) \hconv \Mm_{w'}(w'\la,\la)\br\\
&&\hs{5ex}=\max\bl\eps^*_i(\Mm_{w'}(w'\la,\la)),
\;m-\wt_i(\Mm_{w'}(w'\la,\la))\br\\
&&\hs{5ex}=\max\bl0,\;m-\ang{h_i,w'\la-\la}\br
=\max\bl0,\,\ang{h_i,\la}\br=0.
\eneqn

\medskip
Let us show \eqref{item: sla}.
It is trivial for $\ell(w)\le 1$.
Hence we assume that $\ell(w)>1$.
Let us take $j\in I$ such that
$s_jws_i<ws_i$.
Set $w'=s_jw$  and 
 $n=\ang{ h_j  ,w'\la}\ge0$.
Then we have
\eqn
&&\Mm_w(w\la,\la)\simeq L(j^n)\hconv \Mm_{w'}(w'\la,\la)\qtq\\
&&\Mm_{ws_i}(w\la,s_i\la)\simeq L(j^n)\hconv \Mm_{w's_i}(w'\la,s_i\la),
\eneqn
 where the second isomorphism follows from \eqref{item: sla} for the pair $ws_i$ and $s_i\la$.

By the induction hypothesis, we have
$$\Mm_{w'}(w'\la,\la)\simeq \Mm_{w's_i}(w'\la,s_i\la)\hconv L(i^m) $$
where $m=\ang{h_i,\la}$.
Let us show that
$L(j^n)\conv \Mm_{w's_i}(w'\la,s_i\la)\conv L(i^m)$ has a simple head. 

If $i=j$, Theorem~\ref{th:2istring}
implies that $L(j^n)\conv \Mm_{w's_i}(w'\la,s_i\la)\conv L(i^m)$ has a simple head,
since we have
\eqn\de_i\bl\Mm_{w's_i}(w'\la,s_i\la)\br&=&\eps_i\bl\Mm_{w's_i}(w'\la,s_i\la)\br
+\eps_i^*\bl\Mm_{w's_i}(w'\la,s_i\la)\br
+\wt_i\bl\Mm_{w's_i}(w'\la,s_i\la)\br\\
&=&\ang{h_i,w'\la-s_i\la}=n+m,
\eneqn
 where the second equality follows from \eqref{item eps}  and \eqref{item epsstar} for   $w's_i$.

\noi
If $i\not=j$, 
$L(j^n)\conv \Mm_{w's_i}(w'\la,s_i\la)\conv L(i^m)$ has a simple head
by Corollary~\ref{cor:Normal}.

Hence we conclude that 
$L(j^n)\conv \Mm_{w's_i}(w'\la,s_i\la)\conv L(i^m)$ has a simple head
in any case.
Hence we have
\eqn
\Mm_w(w\la,\la)\simeq L(j^n)\hconv \Mm_{w'}(w'\la,\la)
&&\simeq\hd\bl L(j^n)\conv \Mm_{w's_i}(w'\la,s_i\la)\conv L(i^m) \br\\
&&\simeq \Mm_{ws_i}(w\la,s_i\la)\hconv L(i^m).
\eneqn
Thus we obtain \eqref{item: sla}. 

\medskip
\noi
Finally let us show \eqref{item eps}.
Let us take $j\in I$ such that $w'\seteq ws_j<w$.
Set $m=\ang{h_j,\la}$.
Then by \eqref{item: sla} we have
$$\Mm_w(w\la,\la)\simeq \Mm_{ws_j}(w\la,s_j\la)\hconv L(j^m).$$
By the induction hypothesis,
$\eps_i\bl\Mm_{ws_j}(w\la,s_j\la)\br=0$.
Hence if $i\not=j$ then
$\eps_i\bl\Mm_w(w\la,\la)\br=0$.
If $i=j$, then $\eps^*_i\bl\Mm_{ws_i}(w\la,s_i\la)\br=0$  by \eqref{item epsstar} and hence 
Proposition \ref{prop:epsBi}
implies 
\eqn
\eps_i\bl\Mm_w(w\la,\la)\br
&&=\eps_i\bl\Mm_{ws_i}(w\la,s_i\la)\hconv L(i^m)\br\\
&&=\max\Bigl(\eps_i\bl\Mm_{ws_i}(w\la,s_i\la)\br,\; m-\ang{h_i,\wt\bl
\Mm_{ws_i}(w\la,s_i\la)\br}
\Bigr)\\
&&=\max\bl 0,\; m-\ang{h_i,w\la-s_i\la}\br
=\max\bl 0,\; -\ang{h_i,  w  \la}\br
=0,
\eneqn
as desired.
\QED

\Lemma
Let $w\in W$ and let $\la,\mu\in \wtl$
be $w$-dominant weights.
We assume that $\Mm_w(w\la,\la)$ 
and $\Mm_w(w\mu,\mu)$ strongly commute. 
Then we have
$$\Mm_w\bl w(\la+\mu),\la+\mu\br\simeq\Mm_w(w\la,\la)\conv\Mm_w(w\mu,\mu)$$
up to a grading shift.
\enlemma
\Proof
Let us argue by induction on $\ell(w)$.
If $\ell(w)>0$, take $i\in I$ such that $w'\seteq s_iw<w$.
Set $m=\ang{h_i,w'\la}$ and $n=\ang{h_i,w'\mu}$. Then
 $m=\eps_i(\Mm_w(w\la,\la))$ and $n=\Mm_w(w\mu,\mu)$ by Theorem  \ref{thm:gdm} \eqref{item a}, and hence \
$\Mm_{w'}(w'\la,\la)$ commutes with $\Mm_{w'}(w'\mu,\mu)$
 by \cite[Lemma 3.1]{KKOP18}.
Hence
we have
\eqn
E_i^{(m+n)}\bl\Mm_w(w\la,\la)\conv\Mm_w(w\mu,\mu)\br
&&\simeq
\Mm_{w'}(w'\la,\la)\conv\Mm_{w'}(w'\mu,\mu)\\
&&\simeq\Mm_{w'}\bl w'(\la+\mu),\la+\mu\br
\eneqn
 by the induction hypothesis, 
which implies that
\eqn
\Mm_w(w\la,\la)\conv\Mm_w(w\mu,\mu)
&&\simeq L(i^{m+n})\hconv\Mm_{w'}\bl w'(\la+\mu),\la+\mu\br\\
&&\simeq\Mm_{w}\bl w(\la+\mu),\la+\mu\br
\eneqn
 by Theorem \ref{thm:gdm} \eqref{item a}.
\QED

\begin{rem}
For $w$-dominant $\la,\mu\in\wtl$,
$\Mm_w(w\la,\la)$ and $\Mm_w(w\mu,\mu)$ do not commute in general.
For example,
when $\g=A_2$, $I=\{1,2\}$, $w=s_1s_2$, $\la=\La_1$, $\mu=s_1\La_1$,
$\Mm_w(w\la,\la)\simeq L(1)$ and $\Mm_w(w\mu,\mu)\simeq L(2)$ do not commute.

Conjecturally, $\Mm_w(w\la,\la)$ and $\Mm_w(w\mu,\mu)$ commute 
if $\la$ and $\mu$ are in the same Weyl chamber (i.e.,
 $(\beta,\la)(\beta,\mu)\ge0$ for any real root $\beta$). 
\end{rem}

\Lemma \label{lem:MhconvMw}
Let $w\in W$ and let $\la\in \wtl$
be a $w$-dominant weight.
Then, for any $\mu\in\wlP_+$,  we have
$$\Mm(w\mu,\mu)\hconv \Mm_w(w\la,\la)\simeq\Mm_w\bl w(\la+\mu),\la+\mu\br$$
up to a grading shift.
\enlemma
\Proof
Let us take $\La\in\wlP_+$ such that
$\La+\la\in\wlP_+$.
 Since
$\Mm(w\La,\La)\conv \bl\Mm(w\mu,\mu) \conv \Mm_w(w\la,\la)\br \simeq
\Mm(w(\La+\mu),\La+\mu) \conv \Mm_w(w\la,\la) $ has a simple head,
it follows that 
\eqn
&&\Mm(w\La,\La)\hconv\bl\Mm(w\mu,\mu)\hconv \Mm_w(w\la,\la)\br\\
&&\hs{10ex}\simeq
 \hd\bl \Mm(w\La,\La)\conv\Mm(w\mu,\mu)\conv \Mm_w(w\la,\la)\br \\ 
&&\hs{10ex}\simeq \Mm\bl w(\La+\mu),\La+\mu\br\hconv\Mm_w(w\la,\la)\\
&&\hs{10ex}\simeq \Mm\bl w(\La+\mu+\la),\La+\mu+\la\br\\
&&\hs{10ex}\simeq\Mm(w\La,\La)\hconv\Mm_w\bl w(\mu+\la),\mu+\la).
\eneqn

Hence we obtain
$$\Mm(w\mu,\mu)\hconv \Mm_w(w\la,\la)\simeq\Mm_w\bl w(\mu+\la),\mu+\la),$$
as desired.
\QED

\begin{rem} 
\bnum
\item
If $\la\in W\La$ for some $\La\in\wlP_+$,
then $\Mm_w(w\la,\la)$ coincides with 
the determinantial module $\Mm(w\la,\la)$.
\item The simple module $\Mm_w(w\la,\la)$ may not be real.
For example, for $\g=A^{(1)}_1$, $I=\{0,1\}$,
$\la=\La_1-\La_0$ and $w=s_0s_1$, the module
$\Mm_w(w\la,\la)\simeq L(0)\hconv L(1)$ is not real.\label{item:A11}
\item In general, the class $[\bl\Mm_w(w\la,\la)]\in \Uqm\simeq K(R\gmod)$ 
depends on the choice of $\{Q_{i,j}(u,v)\}_{i,j\in I}$.
For example, for $\g=A^{(1)}_1$, $I=\{0,1\}$,
$\la=2(\La_1-\La_0)$ and $w=s_0s_1$, the class of the module
$\Mm_w(w\la,\la)\simeq L(0^2)\hconv L(1^2)$ depends on the choice of 
$Q_{01}(u,v)$ (see \cite[Example 3.3]{Kas12}). 

\item
Let $\la,\mu\in \wtl$.
Let $w\in W$ be an element such that $\mu=w\la$ and $\la$ is $w$-dominant.
Then $\Mm_w(\mu,\la)$ does depend on the choice of $w$ in general.

Let $\g=A^{(1)}_2$, $I=\{0,1,2\}$,
and $\la=\La_1+\La_2-2\La_0$, $w=s_2s_1s_0s_2s_1$,
and $v=s_1s_2s_0s_1s_2$. Then $\mu=w\la=v\la=\la-3(\al_1+\al_2)-\al_0$.
We have
\eqn
\Mm_w(\mu,\la)&\simeq& \Mm_w(w\la_1,\la_1)\conv\Mm_w(w\la_2,\la_2)
\simeq \ang{2,1,0,2,1}\conv\ang{1,2}
,\\
\Mm_v(\mu,\la)&\simeq&\Mm_v(v\la_2,\la_2)\conv\Mm_v(v\la_1,\la_1)
\simeq
\ang{1,2,0,1,2}\conv\ang{2,1}
\eneqn
where $\la_k=\La_k-\La_0$ ($k=1,2$).
Note that for $(\nu_1,\ldots,\nu_n)\in I^n$ such that
$(\al_{\nu_k},\al_{\nu_{k+1}})<0$ and $\al_{\nu_k}\not=\al_{\nu_{k+2}}$, we denote by 
$\ang{\nu_1,\ldots,\nu_n}$ the one-dimensional 
$R(\;\sum_{k=1}^n\al_{\nu_k})$-module such that
$e(\nu_1,\ldots,\nu_n)\ang{\nu_1,\ldots,\nu_n}=\ang{\nu_1,\ldots,\nu_n}$.
\item
When $\la\in\wtl$ is $w$-dominant and $\mu,\la+\mu\in\pwtl$,
we have $\Mm(w\mu,\mu)\hconv \Mm_w(w\la,\la)\simeq\Mm\bl w(\la+\mu),\la+\mu\br$
as seen in Theorem~\ref{thm:gdm}.
However, $\Mm(w\mu,\mu)$ and $\Mm_w(w\la,\la)$ may not commute in general.
For example, take $\g=A^{(1)}_1$,
$w=s_0s_1$, $\la=\La_1-\La_0$ as in \eqref{item:A11}.
If we take $\mu=\La_0$, then,
$\Mm(w\mu,\mu)\hconv \Mm_w(w\la,\la)\simeq \Mm(w\La_1,\La_1)\simeq
L(0)\hconv\ang{0,1}\simeq L(0^2)\hconv L(1)$ and
$\Mm_w(w\la,\la)\hconv\Mm(w\mu,\mu)\simeq \ang{0,1,0}$ is one-dimensional.
\ee
\end{rem}

Recall the definition of $\psi$ (see \eqref{def:antipsi}).

\Lemma \label{lem:psiMw}
Let $w\in W$ and let $\la\in\wtl$.
Assume that $\la$ is $w$-dominant.
Then we have  an isomorphism of graded modules
$$\psi_*\bl\Mm_w(w\la,\la)\br
\simeq\Mm_{w^{-1}}(-\la,-w\la).$$
\enlemma
\Proof
Let us argue by induction on $\ell(w)$.
Take $i\in I$ such that $w'=s_iw<w$.
Set $n=\ang{h_i,w'\la} \ge 0$.
Then we have
\eqn
\psi_*(\Mm_w(w\la,\la))
&&\simeq
\psi_*\Bigl(L(i^n)\hconv\Mm_{w'}(w'\la,\la)\Bigr)
\simeq
\psi_*(\Mm_{w'}(w'\la,\la) )\hconv L(i^n)\\
&&\simeq\Mm_{w'^{-1}}(-\la,-w'\la)\hconv L(i^n)\\
&&\simeq\Mm_{w'^{-1}s_i}(-\la,-s_iw'\la)
\simeq\Mm_{w^{-1}}(-\la,-w\la),
\eneqn
 where the the first and fourth isomorphisms follow from 
Theorem \ref{thm:gdm} \eqref{item a} and \eqref{item: sla}.
\QED

\Lemma
Let $w\in W$ and let $\la\in\wtl$.
Assume that $\la$ is $w$-dominant.
Then for any $\La\in\wlP_+$, we have
\eq
\ba{rl}\La\bl \Mm(w\La,\La),\,\Mm_w(w\la,\la)\br
&=-\bl w\La+\La,\,\wt(\Mm_w(w\la,\la))\br,\\
\tLa\bl \Mm(w\La,\La),\,\Mm_w(w\la,\la)\br
&=-\bl\La,\,\wt(\Mm_w(w\la,\la))\br.
\ea\label{eqe:LL}
\eneq
\enlemma

\Proof
Let us take $\mu\in\wlP_+$ such that $\eta\seteq\la+\mu\in\wlP_+$.
Set $M_\La=\Mm(w\La,\La)$, $M_\la=\Mm_w(w\la,\la)$,
$M_\mu=\Mm(w\mu,\mu)$ and $M_\eta=\Mm(w\eta,\eta)$.
Then we have
$$M_\mu\hconv M_\la\simeq M_\eta\qt{up to a grading shift.}$$
Since $M_\La$ commutes with $M_\mu$, by \cite[Theorem 4.12]{KKOP18} we have
\eqn
&&-\bl w\La+\La,\wt(M_\mu)+\wt(M_\la)\br=
-\bl w\La+\La,\wt(M_\eta)\br=
\La(M_\La, M_\eta)\\
&&\hs{10ex}=\La(M_\La, M_\mu\hconv M_\la)
=\La(M_\La, M_\mu)+\La(M_\La,M_\la)\\
&&\hs{10ex}=-\bl w\La+\La,\wt(M_\mu)\br+\La(M_\La,M_\la).
\eneqn
The last equality in \eqref{eqe:LL} follows from
\eqn
2\;\tLa\bl \Mm(w\La,\La),\,\Mm_w(w\la,\la)\br
&&=\La\bl \Mm(w\La,\La),\,\Mm_w(w\la,\la)\br+\bl\wt(M_\La), \wt(M_\la)\br\\
&&=-\bl w\La+\La,\wt(M_\la)\br+\bl w\La-\La,\wt(M_\la)\br\\
&&=-2\bl\La,\wt(M_\la)\br.
\eneqn
\QED

\section{Rigidity of the category $\tCw[{w}]$}

\subsection{Kernel of the localization functor}

There exists a unique family of subsets $\st{B_{w}(\infty)}_{w\in\weyl}$ of $B(\infty)$ satisfying the following properties (see \cite{Kas93}):
\begin{enumerate}
\item $B_{w}(\infty)=\left\{u_{\infty}\right\}$ if $w=1$,
\item if $s_{i} w<w$, then
$$
B_{w}(\infty)=\bigcup_{k \geqslant 0} \tilde{f}_{i}^{k} B_{s_{i} w}(\infty).
$$
\end{enumerate}
For $i\in I$ and a simple module $M$, set $\tE_{i}^{\max} M\seteq E_{i}^{(n)} M$ where $n=\eps_{i}(M)$.

Let $w\in\weyl$ and $\underline{w} = s_{i_1}s_{i_2}\cdots s_{i_l}$ a reduced expression of $w$.

For $b \in B(\infty)$, we denote by $S(b)$ the self-dual simple $R$-module corresponding to $b$.

\begin{prop} \label{prop:binBw}
Let $M$ be a self-dual simple $R$-module.
Then the following conditions are equivalent.
\bna
\item $\tE_{i_l}^{\max} \tE_{i_{l-1}}^{\max} \cdots \tE_{i_2}^{\max} \tE_{i_1}^{\max} M \simeq \one$.
\item There exists $(a_k)_{1\le k \le l} \in (\Z_{\ge 0}){\,}^{l }$ such that 
$$\text{$\wt(M)=-\displaystyle\sum_{k=1}^l a_k\alpha_{i_k}$ and \quad
$e(i_1^{a_1},\ldots,i_{l-1}^{a_{l-1}}, i_l^{a_l})M\neq 0$.}$$
\item  There exists $(a_k)_{1\le k \le l} \in \Z_{\ge 0}^l$ such that 
$L(i_1^{a_1}) \conv \cdots \conv L(i_{l-1}^{a_{l-1}})\conv L(i_l^{a_l})\epito M$.
\item $M\simeq S(b)$ for some  $b \in B_w(\infty)$.
\end{enumerate}
\end{prop}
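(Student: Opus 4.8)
The plan is to prove the cycle of implications
(a) $\Rightarrow$ (b) $\Rightarrow$ (c) $\Rightarrow$ (d) $\Rightarrow$ (a), using induction on $\ell(w)$ and the recursive definition of $B_w(\infty)$.

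\textbf{(a) $\Rightarrow$ (b).} The plan is to argue by induction on $l$. Set $a_1 = \eps_{i_1}(M)$ and $M_1 = \tE_{i_1}^{\max}M$. By the definition of $\eps_{i_1}$ and $E^{(a_1)}_{i_1}$ we have $e(i_1^{a_1},\,\wt(M)+a_1\al_{i_1})M\neq 0$ and $M_1$ is the head of $E_{i_1}^{a_1}M$, so any nonzero vector of $M_1$ witnessing $e(i_2^{a_2},\ldots,i_l^{a_l})M_1\neq 0$ lifts to one witnessing $e(i_1^{a_1},\ldots,i_l^{a_l})M\neq 0$. Since $\tE_{i_l}^{\max}\cdots\tE_{i_2}^{\max}M_1\simeq\one$, the induction hypothesis applied to the word $s_{i_2}\cdots s_{i_l}$ and the module $M_1$ produces the remaining $(a_k)_{2\le k\le l}$; the weight bookkeeping $\wt(M) = \wt(M_1)-a_1\al_{i_1} = -\sum_k a_k\al_{i_k}$ is immediate.

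\textbf{(b) $\Rightarrow$ (c) $\Rightarrow$ (b).} These are essentially formal. For (b) $\Rightarrow$ (c): a nonzero element of $e(i_1^{a_1},\ldots,i_l^{a_l})M$ gives, by the adjunction \eqref{eq:adjoints} (or rather its iterated form), a nonzero homomorphism $L(i_1)^{\conv a_1}\conv\cdots\conv L(i_l)^{\conv a_l}\to M$ up to grading shift; since $M$ is simple this map is surjective, and $L(i_k^{a_k})$ differs from $L(i_k)^{\conv a_k}$ only by a grading shift. For (c) $\Rightarrow$ (b): an epimorphism onto $M$ restricts under $e(i_1^{a_1},\ldots,i_l^{a_l})$ to an epimorphism onto $e(i_1^{a_1},\ldots,i_l^{a_l})M$, and the left-hand side has $e(i_1^{a_1},\ldots,i_l^{a_l})$ acting nontrivially on the cyclic generator, so the target is nonzero.

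\textbf{(c) $\Leftrightarrow$ (d).} This is the heart of the argument, and where I expect the real work. The plan is induction on $l$ using the recursion $B_w(\infty)=\bigcup_{k\ge 0}\tf_{i_1}^kB_{s_{i_1}w}(\infty)$ (writing $w = s_{i_1}w'$ with $w'= s_{i_2}\cdots s_{i_l}$, $\ell(w')<\ell(w)$). For the direction (d) $\Rightarrow$ (c): if $M\simeq S(b)$ with $b = \tf_{i_1}^{a_1}b'$, $b'\in B_{w'}(\infty)$, then $S(b)\simeq L(i_1^{a_1})\hconv S(b')$ up to grading shift by the crystal compatibility recalled in the excerpt ($S(\tf_i b)\simeq L(i)\hconv S(b)$), and the inductive hypothesis gives an epimorphism $L(i_2^{a_2})\conv\cdots\conv L(i_l^{a_l})\epito S(b')$; convolving on the left with $L(i_1^{a_1})$ and composing with $L(i_1^{a_1})\conv S(b')\epito L(i_1^{a_1})\hconv S(b')\simeq M$ yields (c). For (c) $\Rightarrow$ (d): given the epimorphism, set $a_1 = \eps_{i_1}(M)$; one must check $a_1$ agrees with the exponent forced by (c), i.e. that the word can be normalized so $a_1 = \eps_{i_1}(M)$ — here the key point is that $\eps_{i_1}\bigl(L(i_1^{a_1})\conv\cdots\bigr)$ controls $\eps_{i_1}(M)$ and one may peel off $E_{i_1}^{\max}$ to reduce to $M_1 = \tE_{i_1}^{\max}M$, which by (a) $\Leftrightarrow$ (c) (already proved above, applied to $w'$) corresponds to some $b'\in B_{w'}(\infty)$, whence $M\simeq S(\tf_{i_1}^{a_1}b')\in S(B_w(\infty))$.

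\textbf{(d) $\Rightarrow$ (a).} The plan is again induction on $l$: if $b = \tf_{i_1}^{a_1}b'$ with $b'\in B_{w'}(\infty)$ and $a_1 = \eps_{i_1}(b)$, then $\tE_{i_1}^{\max}S(b)\simeq S(b') = S(\te_{i_1}^{a_1}b)$ (using $S(\te_i b')\simeq\hd(E_i S(b'))$ and that $\tE_{i_1}^{\max}$ iterates this $\eps_{i_1}$ times), and by induction $\tE_{i_l}^{\max}\cdots\tE_{i_2}^{\max}S(b')\simeq\one$. Composing gives (a). The main obstacle I anticipate is the bookkeeping in (c) $\Rightarrow$ (d): ensuring that the exponent vector produced by an \emph{arbitrary} surjection from a product of $L(i_k^{a_k})$'s can be replaced by the canonical one with $a_1 = \eps_{i_1}(M)$, $a_2 = \eps_{i_2}(\tE_{i_1}^{\max}M)$, etc. This is exactly the statement that the ``Demazure-type'' reachability condition is insensitive to the particular witness, and it is cleanest to route it through the equivalence with (a), which is why I would prove (a) $\Leftrightarrow$ (b) $\Leftrightarrow$ (c) first as a block (pure induction, no crystals) and only then bring in $B_w(\infty)$ via (c) $\Leftrightarrow$ (d) and (d) $\Rightarrow$ (a).
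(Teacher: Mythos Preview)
Your proposal has the right skeleton but leaves the one genuinely nontrivial step unjustified: the ``peeling off'' of $\tE_{i_1}^{\max}$ in (c)~$\Rightarrow$~(d) (equivalently, in (c)~$\Rightarrow$~(a)). Given an epimorphism $L(i_1^{a_1})\conv\cdots\conv L(i_l^{a_l})\epito M$ and $n=\eps_{i_1}(M)$, you need to produce an epimorphism $L(i_2^{b_2})\conv\cdots\conv L(i_l^{b_l})\epito M_1=E_{i_1}^{(n)}M$. You write that ``$\eps_{i_1}\bigl(L(i_1^{a_1})\conv\cdots\bigr)$ controls $\eps_{i_1}(M)$'' and that one may replace the witness by the canonical one, but neither of these claims follows formally: there is no reason a priori that $n=a_1$, and the remaining factors $L(i_k^{a_k})$ for $k\ge2$ can themselves contribute copies of $i_1$, so you cannot simply strip off the first factor. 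The paper closes this gap with the shuffle lemma (Proposition~\ref{prop: shuffle lemma}): the module $E_{i_1}^{(n)}\bigl(L(i_1^{a_1})\conv\cdots\conv L(i_l^{a_l})\bigr)$ acquires a filtration whose subquotients are of the form $L(i_1^{b_1})\conv\cdots\conv L(i_l^{b_l})$; one then takes the lowest filtration piece with nonzero image in $M_1$, and the condition $\eps_{i_1}(M_1)=0$ forces $b_1=0$.

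Apart from this gap, your route is more laborious than the paper's. The paper disposes of (a)~$\Rightarrow$~(b)~$\Leftrightarrow$~(c) as trivial, obtains (a)~$\Leftrightarrow$~(d) immediately by citing \cite[Proposition~3.2.5]{Kas93} (the characterization $B_w(\infty)=\{b\mid \te_{i_l}^{\max}\cdots\te_{i_1}^{\max}b=u_\infty\}$), and then proves only (c)~$\Rightarrow$~(a) by the shuffle-lemma induction just described. Your separate arguments for (d)~$\Rightarrow$~(c), (d)~$\Rightarrow$~(a), and (c)~$\Rightarrow$~(d) via the crystal recursion are correct in outline but redundant once (a)~$\Leftrightarrow$~(d) is known from the literature; they also all funnel back into the same missing inductive step.
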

\begin{proof}
$(a) \Rightarrow (b)  \Leftrightarrow (c)$ are trivial.
$(a) \Leftrightarrow (d)$ follows from that 
$$B_w(\infty) = \set{b \in B(\infty)}{\te_{i_l}^{\max} \cdots \te_{i_1}^{\max} b \simeq \one},$$
 which is \cite[Proposition 3.2.5]{Kas93}.

Assume (c).
We will show (a) by induction on $l$.
When $l=0$, it is trivial.
Assume that $l>0$.
Let $n=\eps_{i_1}(M)$ and $M_0=\tE_{i_1}^{\max} M \simeq E_{i_1}^{(n)} M$.
Then there exists a non-zero homomorphism
$$K\seteq E_{i_1}^{(n)} \left(L(i_1^{a_1}) \conv \cdots \conv L(i_{l-1}^{a_{l-1}})\conv L(i_l^{a_l})\right)\To[\phi]M_0.$$
 Then by the shuffle lemma, there exists a filtration $(F_s)_{0\le s\le t}$ of $K$ such that
\eqn
F_s /F_{s-1} \simeq L(i_1^{b_1}) \conv \cdots \conv L(i_l^{b_l})
\eneqn
for some $b_1,\ldots,b_l \ge 0$.

Take the smallest $s$ such that $\phi(F_s)\neq 0$.
Then $\phi$ induces a non-zero homomorphism $F_s /F_{s-1} \to M_0$.
Since $M_0$ is simple, we obtain a surjective homomorphism
$L(i_1^{b_1}) \conv \cdots \conv L(i_l^{b_l})\epito M_0$
for some $b_1,\ldots,b_l \ge 0$.
 Since $\eps_{i_1}(M_0)=0$, we conclude that $b_1=0$. 
By the induction hypothesis, we have
$\tE_{i_l}^{\max} \tE_{i_{l-1}}^{\max} \cdots \tE_{i_2}^{\max}  M_0 \simeq \one$  and hence $$\tE_{i_l}^{\max} \tE_{i_{l-1}}^{\max} \cdots \tE_{i_2}^{\max} \tE_{i_1}^{\max}  M \simeq \one,$$
as desired.
\end{proof}

\begin{prop} 
\label{prop:tEMneq1}
Let $M$ be a simple $R$-module.
The followings are equivalent.
\bna
\item $\La(\Mm(w\la,\la),M) < -(w\la+\la, \wt(M))$ for some $\la \in \wlP_+$,
\item $\tE_{i_l}^{\max} \cdots \tE_{i_1}^{\max} M \not\simeq \one$.
\end{enumerate}
\end{prop}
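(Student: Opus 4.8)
The strategy is to prove the contrapositive equivalence by reinterpreting both sides as a statement about whether the simple module $M$ survives in the localization, i.e. whether the braider morphism $\coR_{\dC_i}(M)$ (equivalently the $R$-matrix $\rmat{\dC_\la,M}$) is ``saturated'' or drops degree. Concretely, I would first reformulate condition (a). Recall $\dphi_i(\beta)=-(\la_i,\beta)$ with $\la_i=w\La_i+\La_i$ (when $w\La_i\neq\La_i$), and that for any non-degenerate graded braider the structure morphism $\coR_{\dC_i}(M)$ is (a scalar times) $\rmat{\dC_i,M}$, with $\phi_i(\wt M)=-\La(\dC_i,M)$. By Proposition~\ref{Prop: canonical braiders} and the definition of $\lG$, condition (a) for $\La=\sum m_i\La_i$ reads $\La(\Mm(w\La,\La),M)<-(w\La+\La,\wt M)=\sum_i m_i\,\dphi_i(-\wt M)$; using Proposition~\ref{Prop: dM properties}(iv) and \eqref{eq:Lalinear}, $\La(\Mm(w\La,\La),M)=\sum_i m_i\La(\dC_i,M)$, so (a) is equivalent to $\La(\dC_i,M)<-(w\La_i+\La_i,\wt M)$ for \emph{some} $i\in I_w$. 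So I have reduced to: there is some $i$ with the $R$-matrix $\rmat{\dC_i,M}$ \emph{not} of ``generic'' degree, versus $\tE^{\max}_{i_l}\cdots\tE^{\max}_{i_1}M\not\simeq\one$.

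\textbf{Key steps.} Next I would connect the degree defect of $\rmat{\dC_i,M}$ to the vanishing $\coR_{\dC_i}(M)=0$ is \emph{never} the case (the braider is non-degenerate), so instead the relevant statement is whether $Q_w(M)\simeq 0$. Here I would invoke Proposition~\ref{prop:Locsim}: $Q_w(M)\simeq q^{H(\La,\La)}\dC_{-\La}\conv Q_w(\dC_\La\hconv M)$, and the determinantial module $\dC_\La=\Mm(w\La,\La)$ is invertible in the localization. The module $M$ ``vanishes'' precisely when the composite $\dC_\La\conv M\epito\dC_\La\hconv M\monoto M\conv\dC_\La$ fails to be an isomorphism after applying $Q_w$, which by the universal property and the computation of braider degrees happens iff $\La(\dC_i,M)<-(w\La_i+\La_i,\wt M)$ for some $i$ — i.e. exactly condition (a). This is essentially the content hidden in \cite[Proposition 4.8]{KKOP21} and the surrounding results; I would cite it to get: \emph{(a) holds iff $Q_w(M)\simeq 0$}. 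For the other side, I would use Proposition~\ref{prop:binBw}: $\tE^{\max}_{i_l}\cdots\tE^{\max}_{i_1}M\simeq\one$ iff $M\simeq S(b)$ for some $b\in B_w(\infty)$ iff there exist $(a_k)$ with $L(i_1^{a_1})\conv\cdots\conv L(i_l^{a_l})\epito M$. So condition (b) says $M$ is \emph{not} a quotient of any such iterated convolution of the $L(i_k)$.

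\textbf{Linking the two.} The heart of the argument is then: $Q_w(M)\simeq 0$ iff $M\not\simeq S(b)$ for any $b\in B_w(\infty)$. For the direction ``$M\simeq S(b)$, $b\in B_w(\infty)$ $\Rightarrow$ $Q_w(M)\neq 0$'', I would use the surjection $L(i_1^{a_1})\conv\cdots\conv L(i_l^{a_l})\epito M$ from Proposition~\ref{prop:binBw}(c). Each $L(i_k)$ lies in $\catC_w$ (indeed $S(u_\infty)$-type modules built from the reduced word stay in $\catC_w$), hence $M\in\catC_w$, and non-zero simple objects of $\catC_w$ stay non-zero under $\Phi_w$ (property (iii) of the summary of \cite{KKOP21} results), so $Q_w(M)\not\simeq 0$. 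For the converse ``$M\notin$ image of $B_w(\infty)$ $\Rightarrow Q_w(M)\simeq 0$'', I would argue that if $M$ is not in $\catC_w$ (equivalently $\gW(M)\not\subset\Sp(\prD\cap w\nrD)$), then some determinantial module $\dC_i$ ``eats'' $M$ nontrivially: more precisely $E^{\max}_{i_l}\cdots E^{\max}_{i_1}M\not\simeq\one$ forces, by repeatedly applying Theorem~\ref{th:strong} / Corollary~\ref{cor:tLaMN=0} along the reduced word, a strict inequality $\La(\dC_i,M)<-(w\La_i+\La_i,\wt M)$ for some $i$. This is where I expect the \textbf{main obstacle}: carefully tracking the $\tLa$-bounds down the reduced expression $s_{i_1}\cdots s_{i_l}$, using the inductive description of both $B_w(\infty)$ (via $\tE^{\max}_{i}$) and of $\dC_i=\Mm(w\La_i,\La_i)$ (via Proposition~\ref{Prop: dM properties}(ii),(iii)), and showing that a \emph{single} failure $\tE^{\max}_{i_k}(\cdots)\neq\one$ at some stage produces the required degree drop. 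The generalized determinantial modules $\Mm_w(w\la,\la)$ of Theorem~\ref{thm:gdm} and the formula \eqref{eqe:LL} for $\tLa(\Mm(w\La,\La),\Mm_w(w\la,\la))$ should be the technical engine that makes this bookkeeping work, since they interpolate between the $L(i^a)$'s and the honest determinantial modules. Once both equivalences ((a) $\Leftrightarrow$ $Q_w(M)\simeq 0$ and (b) $\Leftrightarrow$ $Q_w(M)\simeq 0$) are in hand, the proposition follows immediately.
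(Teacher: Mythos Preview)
Your approach has a genuine gap, and structurally it does not reduce the problem.

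First, the claim ``each $L(i_k)$ lies in $\catC_w$'' is false. In type $A_2$ with $w=s_1s_2$ one has $\prD\cap w\nrD=\{\al_1,\al_1+\al_2\}$, so $L(2)\notin\catC_w$; yet $\tf_2 u_\infty\in B_{s_2}(\infty)\subset B_w(\infty)$ and $S(\tf_2 u_\infty)\simeq L(2)$. Hence your argument for ``$b\in B_w(\infty)\Rightarrow Q_w(M)\not\simeq 0$'' via membership in $\catC_w$ does not go through. (Simple modules of $\catC_w$ are parametrised by a \emph{different} subset of $B(\infty)$ than $B_w(\infty)$.) The other direction you label the ``main obstacle'' and do not actually argue; invoking Theorem~\ref{th:strong} and generalized determinantial modules is a pointer, not a proof.

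Second, structurally: granting from \cite{KKOP21} that (a) $\Leftrightarrow Q_w(M)\simeq 0$, what remains is precisely (b) $\Leftrightarrow Q_w(M)\simeq 0$. But that is Proposition~\ref{prop:kernel}, which in the paper is \emph{deduced from} Proposition~\ref{prop:tEMneq1}, not the other way around. So the detour through $Q_w$ only relabels the problem.

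The paper avoids the localization functor entirely and argues by a short induction on $l=\ell(w)$. Writing $i=i_1$, $w'=s_iw$, $n=\eps_i(M)$, $M_0=\tE_i^{\max}M$, one checks directly that
\[
\tLa\bl\Mm(w\la,\la),M\br+(\la,\wt M)=\tLa\bl\Mm(w'\la,\la),M_0\br+(\la,\wt M_0)
\]
for every $\la\in\wlP_+$. Two ingredients suffice: (i) $\de\bl\Mm(w\la,\la),L(i)\br=0$, so replacing $M$ by $L(i^n)\hconv M_0$ is additive in $\tLa$; (ii) $\tLa(L(i^s),M_0)=0$ since $\eps_i(M_0)=0$, so Corollary~\ref{cor:tLaMN=0} lets one replace $\Mm(w\la,\la)\simeq L(i^s)\hconv\Mm(w'\la,\la)$ by $\Mm(w'\la,\la)$. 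Since also $\tE_{i_l}^{\max}\cdots\tE_{i_1}^{\max}M\simeq\one$ iff $\tE_{i_l}^{\max}\cdots\tE_{i_2}^{\max}M_0\simeq\one$, the equivalence descends to $(w',M_0)$ and the induction closes.
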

Note that the first condition is equivalent to 
$\tLa(\Mm(w\la,\la),M) < -(\la, \wt(M))$.

\begin{proof}
We will proceed by induction on $l= \ell(w)$.

If $l=0$, then $\Mm(w\la,\la) \simeq \one$. 
Hence 
$0 < -(2\la, \wt(M))$ for some $\la \in \wlP_+$
if and only if
$\wt(M)\neq 0$. 
This is equivalent to $M \not \simeq \one$.

\medskip
Assume that $l >0$.
Set  $i\seteq i_1$,
$w'\seteq s_{i}w$, $s\seteq\lan h_{i_1}, w'\la \ran$ and 
$M_0\seteq\tE_{i_1}^n (M)$ , where $n=\eps_{i_1}M$.
For $\la \in \wlP_+$, we have
\eqn 
&&\tLa(\Mm(w\la,\la),M) = \tLa(\Mm(w\la,\la),L(i^n)\hconv M_0)
 = \tLa(\Mm(w\la,\la),L(i^n)) +  \tLa(\Mm(w\la,\la),M_0) \\
 &&=(\la,n\al_{i})+ \tLa(L(i^s)\hconv \Mm(w'\la,\la),M_0) 
 =(\la,n\al_{i})+ \tLa(\Mm(w'\la,\la),M_0),
\eneqn
where the second and the third equalities follow from that $\de(\Mm(w\la,\la),L(i))=0$, 
the  last  follows from Corollary \ref{cor:tLaMN=0} together with $\tLa(L(i^s), M_0)=0$.

Hence we obtain
\eqn 
&&\tLa(\Mm(w\la,\la),M) +(\la, \wt(M)) 
=(\la,n\al_{i})+ \tLa(\Mm(w'\la,\la),M_0) +(\la, \wt(M)) \\
&&= \tLa(\Mm(w'\la,\la),M_0) +(\la, \wt(M_0)). 
\eneqn
It follows that
$\tLa(\Mm(w\la,\la),M) +(\la, \wt(M)) <0$ if and only if $ \tLa(\Mm(w'\la,\la),M_0) +(\la, \wt(M_0)) <0 $.
It is obvious that
$\tE_{i_l}^{\max} \cdots \tE_{i_1}^{\max} M \not\simeq \one$ if and only if
$\tE_{i_l}^{\max} \cdots \tE_{i_2}^{\max} M_0 \not\simeq \one$.
Hence the induction hypothesis implies that
$\tLa(\Mm(w\la,\la),M) < -(\la, \wt(M))$  if and only if $\tE_{i_l}^{\max} \cdots \tE_{i_1}^{\max} M \not\simeq \one$.
\end{proof}

Assume that $I=I_w\seteq\st{i_1,\ldots, i_l}$.
Recall that  $Q_{w} \cl  R\gmod \to \tRm[w]\simeq\tCw$ is the localization of $R \gmod$ via the real commuting family of graded braiders  $\st{(\dC_i, \coR_{\dC_i}, \dphi_i)}_{ i\in I}$.

Then we have the following proposition.
\begin{prop}[{cf.\ \cite{Kimura12}}] \label{prop:kernel}
Assume $I=I_w$.
Let $X$ be a module in $R(\beta) \gmod$.
Then the following conditions are equivalent.
\bna
\item $Q_w(X) \simeq 0$,
\item every simple subquotient $S$ of $X$ satisfies that 
$\tE_{i_l}^{\max} \cdots \tE_{i_1}^{\max} S \not\simeq \one$,
\item every simple subquotient $S$ of $X$ is isomorphic to $S(b)$ for some $b \notin B_w(\infty)$,
\item $e(i_1^{a_1},\ldots, {i_l}^{a_l} )X =0$ for 
any $(a_k)_{1\le k \le l} \in \Z_{\ge 0}^l$ such that 
$\beta=\displaystyle\sum_{k=1}^l a_k\alpha_{i_k}$.
\end{enumerate}
\end{prop}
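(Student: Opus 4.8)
The plan is to prove the chain of equivalences (a) $\Leftrightarrow$ (b) $\Leftrightarrow$ (c) $\Leftrightarrow$ (d) by combining the two preceding propositions with the structure of the localization functor $Q_w$. First I would establish (b) $\Leftrightarrow$ (c): this is immediate from Proposition~\ref{prop:binBw}, since the condition $\tE_{i_l}^{\max}\cdots\tE_{i_1}^{\max}S\not\simeq\one$ is precisely the negation of condition (a) there, which is equivalent to $S\simeq S(b)$ for some $b\in B_w(\infty)$; so a simple subquotient fails this exactly when $b\notin B_w(\infty)$. Next, (b) $\Leftrightarrow$ (d): given a composition series of $X$, the functor $e(i_1^{a_1},\ldots,i_l^{a_l})$ is exact, so $e(i_1^{a_1},\ldots,i_l^{a_l})X=0$ for all admissible $(a_k)$ if and only if the same holds for every simple subquotient $S$; and for a fixed simple $S$ with $\wt(S)=-\beta$, Proposition~\ref{prop:binBw} (b) $\Leftrightarrow$ (a) says that $e(i_1^{a_1},\ldots,i_l^{a_l})S\neq 0$ for \emph{some} such tuple iff $\tE_{i_l}^{\max}\cdots\tE_{i_1}^{\max}S\simeq\one$. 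Taking contrapositives, (d) for $S$ is equivalent to (b) for $S$, hence (d) $\Leftrightarrow$ (b) for $X$.

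The substantive part is (a) $\Leftrightarrow$ (b). For this I would use the description of $Q_w$ coming from the localization theory. Since $Q_w$ is exact (the localization of an abelian category with exact tensor product is abelian with exact functor $\Upsilon$), $Q_w(X)\simeq 0$ holds if and only if $Q_w(S)\simeq 0$ for every simple subquotient $S$ of $X$; indeed exactness gives a filtration of $Q_w(X)$ with subquotients $Q_w(S)$, and $\tRm[w]$ is abelian with finite-length objects, so $Q_w(X)=0$ iff all these vanish. So it suffices to characterize when $Q_w(S)\simeq 0$ for a simple module $S$. By Proposition~\ref{prop:Locsim}(i), $Q_w(S)$ is simple or zero, and by part (ii), $Q_w(S)\simeq q^{H(\La,\La)}\dC_{-\La}\conv Q_w(\dC_\La\hconv S)$ for any $\La\in\pwtl$; since $\dC_{-\La}$ is invertible in $\tRm[w]$, we get $Q_w(S)\simeq 0$ iff $Q_w(\dC_\La\hconv S)\simeq 0$ for all (equivalently some) $\La$.

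Now the key point is to relate vanishing of $Q_w(S)$ to the $R$-matrix inequality in Proposition~\ref{prop:tEMneq1}. Recall $\dC_\La=\Mm(w\La,\La)$. I expect that $Q_w(S)\simeq 0$ is equivalent to the statement that the braider morphism $\coR_{\dC_\La}(S)\colon \dC_\La\conv S\to q^{\dphi_\La(\beta)}S\conv\dC_\La$ — which up to scalar is $\rmat{\dC_\La,S}$ — is \emph{not} an isomorphism after localizing, and this in turn is governed by whether $\La(\dC_\La,S)=\La(\Mm(w\La,\La),S)$ attains its ``generic'' value $-(w\La+\La,\wt(S))$. More precisely, the localization inverts $\dC_\La$ and the braider maps $\coR_{\dC_\La}(X)$; tracing through the construction of $\Hom$ spaces in $\lT$ as inductive limits, $Q_w(S)=0$ forces every $\Hom$ from a shift of $\dC_\La^{\conv n}$ into $S\conv\dC_\La^{\conv n'}$ to eventually vanish, and one shows this happens exactly when $\tLa(\Mm(w\La,\La),S)<-(\La,\wt(S))$ strictly, i.e.\ the condition (a) of Proposition~\ref{prop:tEMneq1}, which is equivalent to its condition (b), namely $\tE_{i_l}^{\max}\cdots\tE_{i_1}^{\max}S\not\simeq\one$ — condition (b) of our proposition. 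The hard part will be making this last equivalence rigorous: pinning down precisely how the numerical invariant $\La(\Mm(w\La,\La),S)$ controls whether $\Upsilon(S)$ survives the localization, which requires careful bookkeeping with the degree shifts $q^{H(\delta,\beta-\alpha)+\dphi(\delta+\beta,\mu)}$ appearing in the $\gHm_\delta$ spaces and an argument that the relevant transition maps $\gzeta_{\delta',\delta}$ are eventually zero precisely under the strict inequality. I would isolate this as a lemma computing $\Hom_{\tRm[w]}(\one,Q_w(S)\conv\dC_{\La})$ (or a similar one-dimensional-target $\Hom$) in terms of $\La(\dC_\La,S)$, and then invoke Proposition~\ref{prop:tEMneq1} to finish.
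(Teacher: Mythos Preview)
Your handling of (b) $\Leftrightarrow$ (c) $\Leftrightarrow$ (d) via Proposition~\ref{prop:binBw} is correct and matches the paper's implicit argument. Your reduction of (a) to simple modules by exactness of $Q_w$ is also fine.

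The gap is in the (a) $\Leftrightarrow$ (b) step for a simple $S$. You correctly identify that the criterion should be the strict inequality $\La(\Mm(w\la,\la),S)<-(w\la+\la,\wt(S))$ of Proposition~\ref{prop:tEMneq1}, but you then propose to establish this by analysing the $\gHm_\delta$ spaces and the transition maps $\gzeta_{\delta',\delta}$ from scratch, isolating a lemma about $\Hom_{\tRm[w]}(\one,Q_w(S)\conv\dC_\La)$. That is reinventing work already done in \cite{KKOP21}. The paper's proof is two lines: by \cite[Proposition~4.4, Proposition~5.1]{KKOP21} one has $\coR_{\dC_\la}(S)=0$ (the braider morphism itself vanishes, not merely fails to become an isomorphism) if and only if $\La(\dC_\la,S)<-(w\la+\la,\wt(S))$; and by construction of the localization, $\id_{Q_w(S)}$ is the image of $\id_S$ in the inductive limit whose transition maps are precisely the $\coR_{\dC_\la}(S)$, so $Q_w(S)\simeq 0$ if and only if $\coR_{\dC_\la}(S)=0$ for some $\la\in\wlP_+$. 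This immediately hands you condition (a) of Proposition~\ref{prop:tEMneq1}, and you are done.

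So the fix is simple: replace your proposed lemma and the bookkeeping with the degree shifts by a direct citation of \cite[Proposition~4.4, Proposition~5.1]{KKOP21}, and note that vanishing of $Q_w(S)$ is exactly vanishing of some $\coR_{\dC_\la}(S)$ in $R\gmod$. Your detour through Proposition~\ref{prop:Locsim}~(ii) is unnecessary for this direction.
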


\begin{proof}
For a simple module $S$ in $R\gmod$ and $\la \in \wlP_+$, $\coR_{\dC_\la}(S)=0$ if and only if 
$\La(\dC_\la,S) < -(w\la+\la, \wt(S))$ by \cite[Proposition 4.4, Proposition 5.1]{KKOP21}.
Recall that by the definition of localization, the identity $\id_{Q_w(S)}$ is the limit of  the morphisms
$R_{C_\la}(S)$ for $\la \in \wlP_+$.
Hence $Q_w(S)\simeq 0$ if and only if   $R_{C_\la}(S)=0$ for some $\la \in \wlP_+$.
Thus the desired result follows from
Proposition \ref{prop:binBw} and  Proposition \ref{prop:tEMneq1}.
\end{proof}

\Cor\label{cor:detnonzero}
For any $w$-dominant $\la\in\wtl$, we have
$$Q_w(\Mm_w(w\la,\la))\not\simeq0.$$
\encor
\Proof
We have
$\tE_{i_l}^{\max} \cdots \tE_{i_1}^{\max}\Mm_w(w\la,\la)\simeq \one$.
\QED

\Rem
Even if $\la$ is $w$-dominant, $\Mm_w(w\la,\la)$ may not belong to 
$\Cw$.
For example,take $\g=A_2$, $w=s_1s_2$ and $
\la=s_1\La_1$.
Then $\Mm_w(w\la,\la)\simeq L(2)$ does not belong to $\Cw$,
since
$\al_2\not\in\prD\cap w\nrD$.
\enrem

\Rem \label{rm:KO}
Let us denote by ${\rm Ker}\, Q_w$ the full subcategory of $R\gmod$ consisting of objects $X$ satisfying $Q_w(X)\simeq 0$. The Grothendieck group $K({\rm Ker}\, Q_w)$ is a two-sided ideal of $K(R\gmod)$.
Then
we have the following commutative diagram.
\eqn
\xymatrix{
\catC_w \ar[rr]^{\Phi_w} \ar@{^{(}->}[d] && \tcatC_w \ar[d]^{\iota_w} \\
R\gmod \ar[r]  \ar@/^2pc/[rr]^{Q_w}& R\gmod / {\rm Ker}  \,Q_w \ar[r] & \tRm[w]. 
}
\eneqn
Taking their Grothendieck groups, we have
\eqn
\xymatrix{
\Aq \ar[rr] \ar@{^{(}->}[d] && \Aq{} D(w\La,\La_i)^{-1}; i\in I] \ar[d]^{[\iota_w]} \\
\Aq[\mathfrak n] \ar[r] & \Aq[\mathfrak n]  / I_w \ar[r] &  \bl\Aq[\mathfrak n]  / I_w\br[[D(w\La,\La_i)]^{-1}; i\in I],
}
\eneqn
where $I_w$ is the ideal corresponding to ${\rm Ker}\, Q_w$, and $D(w\La,\La)$ denotes the quantum unipotent minor corresponding to $\Mm(w\La,\La)$. 
Recall that if $\g$ is symmetric and $\cor$ is of characteristic zero, then one can identify the isomorphism classes of self-dual simple modules with the elements of the upper global basis. 
In this case, the ideal $I_w$ coincides with the ideal $(U^-_{w,q})^\perp$  in \cite[Definition 3.37]{KO21}.
And the above diagram recovers \cite[Theorem 4.13]{KO21}, which asserts that $[\iota_w]$ is an isomorphism.
\enrem

\subsection{Equivalence between $\tCw^{*}$ and $\tCw[{w^{-1}}]$}

For $w\in \weyl$, let us denote by
$\Cs$ the full subcategory of $R\gmod$ consisting of 
$R$-modules $M\in R\gmod$ such that
\begin{align*}
\sgW(M) \subset \Sp( \prD \cap w \nrD ).
\end{align*}
Recall that, for a subset $S$ of $\R\tens_\Z\rtl$,
we write $\Sp S $ for the subset of linear combinations of elements in 
$S\cup\st{0}$ with non-negative coefficients.

Set $\dC^*_\La=\psi_*\bl\dM(w\La,\La)\br\simeq\dM(-\La,-w^{-1}\La)$ for $\La\in\pwtl$ and
$\dC^*_i=\dC^*_{\La_i}$.
Then $\st{\dC^*_i\mid i\in I}$ is a family of central objects of $\Cs$.
We denote by $\tCs$ the localization
$\Cs{\;[\dC^*_i{}^{\otimes-1}\mid i\in I\,]}$ of $\Cs$.

Then the anti-automorphism $\psi$ (see \eqref{def:antipsi}) of $R$
induces equivalences of monoidal categories

\eqn 
\psi_*&\cl&(\Cw)^\rev\isoto\Cs,\\
\psi_*&\cl&(\tCw)^\rev\isoto\tCs.
\eneqn
 Recall that  for a monoidal category $\sht$, 
 $\sht^\rev$ is the  category $\sht$  endowed with the new tensor product
$\tens^\rev$ defined by
$X\tens^\rev Y\seteq Y\tens X$.

\medskip
When $I=I_w\seteq  \{ i\in I \mid w\La_i \ne \La_i \}$, we denote by
$$Q_w^*\cl R\gmod\to\tCs$$
the localization functor, which is induced by
$Q_w\cl R\gmod\to \tCw$ and $\psi_*$.
That is,
 $Q^*_w$ is the composition $R\gmod \To[\psi_*] (R\gmod)^\rev  \To[Q_w] \tcatC_w^\rev \To[\psi_*]\tcatC^*_w$. 
Note that the composition
$$\Cs\to R\gmod\To[Q_w^*]\tCs$$
coincides with the localization functor of $\Cs$ by its central objects
$\st{\dC^*_\La}_{\La\in\pwtl}$.

The following  theorem is one of the main results of this paper.

\begin{thm} \label{th:main1}
There is an equivalence of  monoidal categories
$\tCs$ and $\tCw[{w^{-1}}]$.
More precisely, we have a quasi-commutative diagram \ro when $I_w=I$\rf
$$\xymatrix@C=10ex{
R\gmod\ar[d]_-{Q_w^*}\ar[dr]^(.6){Q_{w^{-1}}}\\
\tCs\ar[r]^(.4)\ssim&{\tCw[{w^{-1}}]\,.}
}
$$

\end{thm}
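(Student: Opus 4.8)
The plan is to construct the equivalence $\tCs\isoto\tCw[{w^{-1}}]$ by exhibiting a functor between the two localizations and then checking it is an equivalence, following the same pattern already used in \cite{KKOP21} to establish $\iota_w\cl\tCw\isoto\tRm[w]$. First I would observe that $\Cs=\psi_*\bl(\Cw)^\rev\br$, so that $\tCs\simeq\psi_*\bl(\tCw)^\rev\br$. Composing with the equivalence of Theorem~\ref{thm: Cwequiv}(i), namely $\iota_{w^{-1}}\cl\tCw[{w^{-1}}]\isoto\tRm[{w^{-1}}]$ applied in its reversed form, it suffices to produce a monoidal equivalence $\tCs\isoto(\tRm[{w^{-1}}])^\rev$ compatible with the localization functors; equivalently, by applying $\psi_*$ throughout, to produce a monoidal equivalence $\tCw[{w^{-1}}]\isoto$ (the localization of $R\gmod$ coming from the braiders $\psi_*(\dC^*_i)=\dM(w^{-1}\La_i,\La_i)=\dC_{i}^{(w^{-1})}$). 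So the real content is: the two localizations $\tCw[{w^{-1}}]$ (localization of $\Cw[{w^{-1}}]$ at the determinantial modules $\dM(w^{-1}\La_i,\La_i)$) and $\tCs$ agree after the identification $\psi_*$.

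The key steps, in order, are as follows. First, I would use Lemma~\ref{lem:psiMw}, which gives $\psi_*\bl\Mm_w(w\La,\La)\br\simeq\Mm_{w^{-1}}(-\La,-w\La)=\dC^*_\La$ (reading $\Lambda$ as $w^{-1}\la$ appropriately), together with the fact that the determinantial modules $\dM(w^{-1}\La_i,\La_i)$ generating the localization $\tRm[{w^{-1}}]$ are exactly $\Mm_{w^{-1}}(w^{-1}\La_i,\La_i)$ when $w^{-1}$ is applied to a dominant weight. Second, I would invoke the universal property of localization (Theorem~\ref{Thm: graded localization}(iii)): the composite $R\gmod\To{\psi_*}(R\gmod)^\rev\To{Q_{w}}(\tRm[w])^\rev$, restricted along $\Cs\hookrightarrow R\gmod$, sends each central object $\dC^*_i$ of $\Cs$ to an object that becomes invertible (because $\psi_*(\dC^*_i)\simeq\dM(w^{-1}\La_i,\La_i)$ is one of the generating determinantial modules, invertible in $\tRm[{w^{-1}}]\simeq\tCw[{w^{-1}}]$ by Theorem~\ref{thm: Cwequiv}), and sends the braiding morphisms $\coR_{\dC^*_i}(X)$ to isomorphisms. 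Here I would use Theorem~\ref{Thm: R Ci iso} after translating via $\psi_*$, plus Proposition~\ref{Prop: canonical braiders} to identify the graded braider data $(\dphi^*_i)$ with the right homomorphisms $\phi$, making the diagram of braiders match up on both sides. Third, having obtained a monoidal functor $\tCs\to\tCw[{w^{-1}}]$ from the universal property, I would build the inverse the same way — starting from $\psi_*$ in the other direction (since $\psi_*$ is involutive) — and check that the two composites are the identity, again by the uniqueness clause in the universal property. Finally I would record that the resulting equivalence intertwines $Q^*_w$ and $Q_{w^{-1}}$, giving the quasi-commutative triangle.

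The main obstacle I expect is the bookkeeping of grading shifts and the group homomorphisms $\phi_i$, $\gH$, $\gL$ attached to the two real commuting families of graded braiders: one must verify that the isomorphism $\psi_*(\dC^*_i)\simeq\dM(w^{-1}\La_i,\La_i)$ respects not just the underlying modules but all the structure — the weights $\la_i$, the maps $\dphi_i(\beta)=-(\la_i,\beta)$ of Proposition~\ref{Prop: canonical braiders}, and the bilinear form $\gH$ — so that the universal property genuinely applies and the induced functor is monoidal and $\Lambda$-graded. In particular, $\psi_*$ reverses the tensor order, which swaps the roles of $\gH(\La_i,\La_j)$ and $\gH(\La_j,\La_i)$ and of $\gW$ with $\gW^*$; checking that this swap is exactly compensated by passing to $w^{-1}$ (so that $\gH^{(w^{-1})}(\La_i,\La_j)=(\La_i,w^{-1}\La_j-\La_j)$ matches $\gH^{(w)}(\La_j,\La_i)$ up to the required shift) is the delicate computation. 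The other, more conceptual, point is that we need $\psi_*(\dC^*_i)$ to be invertible in $\tCw[{w^{-1}}]$ rather than merely in the ambient $\tRm[{w^{-1}}]$; this is where Theorem~\ref{thm: Cwequiv}(i) (the equivalence $\iota_{w^{-1}}$, valid under $I=I_{w^{-1}}=I_w$) is essential, and one should remark that $I_w=I_{w^{-1}}$ so the hypothesis transfers.
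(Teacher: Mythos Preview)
Your approach has a fundamental gap arising from a misidentification of what $\psi_*$ does to the braiders. You claim $\psi_*(\dC^*_i)\simeq\dM(w^{-1}\La_i,\La_i)$, but this is false: since $\dC^*_i=\psi_*\bl\dM(w\La_i,\La_i)\br$ by definition and $\psi_*$ is involutive, we have $\psi_*(\dC^*_i)=\dM(w\La_i,\La_i)$. Consequently, the composite $\Cs\hookrightarrow R\gmod\To[\psi_*](R\gmod)^\rev\To[Q_w](\tRm[w])^\rev$ you write down lands in $(\tRm[w])^\rev$, and the universal property applied to it produces only the tautological equivalence $\tCs\simeq(\tCw)^\rev$ --- which is \emph{not} the content of the theorem. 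The theorem asserts $\tCs\simeq\tCw[{w^{-1}}]$ (no reverse), and this is genuinely different.

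The paper's route is to show that $Q_w\circ\psi_*\cl R\gmod\to(\tRm[w])^\rev$ factors through $Q_{w^{-1}}$, yielding $\mathcal F_w\cl\tRm[{w^{-1}}]\to(\tRm[w])^\rev$. For this one must verify that $(Q_w\circ\psi_*)\bl\Mm(w^{-1}\La_i,\La_i)\br$ is invertible in $(\tRm[w])^\rev$. By Lemma~\ref{lem:psiMw}, $\psi_*\bl\Mm(w^{-1}\La_i,\La_i)\br\simeq\Mm_w(-\La_i,-w^{-1}\La_i)$, which is a \emph{generalized} determinantial module --- in general not a determinantial module and certainly not one of the $\dC_j$. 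Its invertibility in $\tRm[w]$ is obtained from the relation $\Mm(w\mu,\mu)\hconv\Mm_w(-\La_i,-w^{-1}\La_i)\simeq\Mm(w\eta,\eta)$ of Theorem~\ref{thm:gdm}\,\eqref{item:wla} together with Proposition~\ref{prop:Locsim}; this is precisely why the theory of generalized determinantial modules was developed. Your proposal never invokes $\Mm_w(-\La_i,-w^{-1}\La_i)$ or Theorem~\ref{thm:gdm}. Furthermore, checking that the braidings $R_{\dC_i^-}(X)$ become isomorphisms for \emph{all} $X\in R\gmod$ (not just $X\in\Cw[{w^{-1}}]$, where Theorem~\ref{Thm: R Ci iso} would apply) is nontrivial; the paper handles it via the kernel characterization of Proposition~\ref{prop:kernel}, which your outline also omits.
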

\begin{proof}
We may assume that  $I=I_w$ without loss of generality. 
Recall the localization functor  $ Q_w  \cl R\gmod \to \tRm[w]$ from 
$R\gmod$ to its localization via the real commuting family of graded braiders
$(\dC_i, \coR_{\dC_i}, \dphi_i)_{ i\in I}$. 
Then there  is a monoidal equivalence of categories  (Theorem \ref{thm: Cwequiv})
$$\iota_{w}\cl \tCw\isoto \tRm[w].$$

Now we consider the chain of the morphisms

$$R\gmod\To[\psi_*](R\gmod)^\rev\To[Q_w]\bl\tRm[w]\br^\rev.$$
We claim that the composition $Q_w\circ\psi_*$ factors as
\eq
\xymatrix@C=7ex{
R\gmod\ar[r]_{Q_{w^{-1}}}\ar@/^2pc/
[rr]^{Q_w\circ\psi_*}& {\tRm[w^{-1}]}\ar@{.>}[r]_{\mathcal F_w}&\bl\tRm[w]\br^\rev.}
\label{eq:Fw}
\eneq

By Theorem \ref{Thm: graded localization}, it is enough to show that 
  
\be[(a)]
\item $ ( Q_{w}\circ \psi_*)(\dC^-_\la) $ is invertible in $\bl\tRm[w]\br^\rev$ for any $\la\in\pwtl$, where
$\dC^-_\la \seteq \Mm(w^{-1}\la,\la)$, and 

\item for any $i\in I$ and $X \in R\gmod$, $(Q_{w}\circ \psi_*)(R_{C_i^-}(X))\cl
(Q_{w}\circ \psi_*)(\dC_i^-\conv X)\to(Q_{w}\circ \psi_*)(X\conv \dC_i^-)$ is an isomorphism.
\ee

\medskip
 In the course of the proof, we forget grading shifts. 

First note that
\eq
\text{For $X\in R\gmod$,
$Q_w(\psi_*(X))\simeq0$ if and only if
$Q_{w^{-1}}(X)\simeq0$,}\label{eq:QWWi}
\eneq
which follows from Proposition~\ref{prop:kernel}.

\mnoi
(a) \ By  Lemma  \ref{lem:psiMw} and Theorem \ref{thm:gdm} (e), we have
\eq \label{eq:invertible}
 \Mm(w\mu,\mu)\hconv \psi_*\bl \Mm(w^{-1}\la,\la)\br \simeq 
\Mm(w\mu,\mu)\hconv \Mm_{w}(-\la,-w^{-1}\la) 
    \simeq  \Mm(w\eta,\eta),
 \eneq
where $\la\in\wlP_+$ and
$\mu,\eta\in\wlP_+$ such that
$\eta-\mu=-w^{-1}\la$.
Hence, Proposition~\ref{prop:Locsim}
implies that
$$Q_w\bigl(\psi_* \dC^-_\la\bigr)\simeq \dC_{-w^{-1}\la}.$$
Hence (a) follows.

\mnoi
(b) \  
It is enough to show that  $(Q_{w}\circ \psi_*)(R_{\dC_\La^-}(X)) \neq 0$ for any $\La \in \wlP_+$ and any simple module $X$  in $R\gmod$ with $(Q_{w}\circ \psi_*)(X)\not\simeq 0$.
 
Indeed, when $X$ is simple,  since $(Q_{w}\circ \psi_*)(\dC_\La^-)$  is invertible by (a), the objects
$(Q_{w}\circ \psi_*)(\dC_\La^- \conv X)$ and $(Q_{w}\circ \psi_*)( X \conv \dC_\La^-)$ are simple in $\bl\tRm[w]\br^\rev$
so that  $(Q_{w}\circ \psi_*)(R_{\dC_\La^-}(X))$ is an isomorphism.
For general $X \in R\gmod$, the morphism $(Q_{w}\circ \psi_*)(R_{\dC_\La^-}(X))$ is an isomorphism by induction on the length of $X$.

Note that for any simple $X \in R\gmod$ such that  $(Q_{w}\circ \psi_*)(X)\not\simeq 0$, there exists $ \mu \in \wlP_+$ and a simple module $Y$ in $\catC_{w^{-1}}$ such that there exists an epimorphism in $R\gmod$.
\eq f\cl \dC^-_{\mu} \conv X \epito Y. \label{eq:epi}\eneq
Indeed, if $(Q_{w}\circ \psi_*)(X)\not\simeq 0$, then $Q_{w^{-1}}(X)\not \simeq 0$
by \eqref{eq:QWWi}. Therefore
$Q_{w^{-1}}(X)\simeq \dC^-_{-\mu} \conv Y$ for some $\mu\in \wlP_+$ and a simple $Y\in \catC_{w^{-1}}$, equivalently $Q_{w^{-1}}(\dC^-_{\mu} \conv X)\simeq Q_{w^{-1}}(Y)$. Hence there is a non-zero homomorphism $\dC^-_{\mu'}\conv\dC^-_{\mu} \conv X \to\dC^-_{\mu'}\conv Y$ in $R\gmod$, which is an epimorphism since $\dC^-_{\mu'}\conv Y$ is simple.  Then, replacing $\dC^-_{\mu'}\conv\dC^-_{\mu}$ and $\dC^-_{\mu'}\conv Y$
with $\dC^-_{\mu}$ and $Y$ respectively,  we obtain an epimorphism
\eqref{eq:epi}.

\smallskip
Then for any $\La\in\pwtl$, the following diagram is commutative.
$$
\xymatrix@C=5em{
& \dC^-_\mu \conv  \dC^-_\La \conv  X \ar[dr]^{ \dC^-_\mu \conv  \coR_{ \dC^-_\La}(X)} &  \\
\dC^-_\La \conv  \dC^-_\mu \conv  X \ar[ur]^{ \coR_{ \dC^-_\La}( \dC^-_\mu) \conv X}  \ar[rr]_{ \coR_{ \dC^-_\La}(  \dC^-_\mu \conv X  )}  \ar@{->>}[d]^f  & 
&  \dC^-_\mu \conv  X \conv \dC^-_\La \ar@{->>}[d]^f \\
\dC^-_\La \conv Y \ar[rr]_{ \coR_{ \dC^-_\La}(Y)}  && Y \conv \dC^-_\La \,.
}
$$

Assume that
$(Q_w \circ \psi_*) (\coR_{ \dC^-_\La}(X)) =0.$
Applying $Q_w \circ \psi_*$ to the above diagram, we obtain 
$(Q_w \circ \psi_*)(\coR_{ \dC^-_\La}(Y))=0.$
Because $\coR_{\dC^-_\La}(Y)$ is an isomorphism,  we have
$(Q_w \circ \psi_*)(\dC^-_\La \conv Y) \simeq 0.$
Since $(Q_w \circ \psi_*)(\dC^-_\La)$ is invertible by (a),
we obtain $(Q_w \circ \psi_*)(Y) \simeq 0$, which implies
$Q_{w^{-1}}(Y)\simeq 0$ by \eqref{eq:QWWi},
which contradicts that $Y$ is a simple module in $\catC_{w^{-1}}$.
Thus we obtain (b).

Thus we obtain the diagram \eqref{eq:Fw}.

\medskip
By changing the roles of $w$ and $w^{-1}$ in the above argument,  we obtain the lower square in the following commutative diagram
$$
\xymatrix@C=4em{
R\gmod  \ar[r]^{Q_{w^{-1}}} \ar[d]_{\psi_*}  & \tRm[w^{-1}] \ar@{.>}[d]^{\mathcal F_w }\\
 (R\gmod)^{\rev} \ar[r]^{Q_w}   \ar[d]_{\psi_*} & \tRm[w]\br^{\rev}\ar@{.>}[d]^{\mathcal F_{w^{-1}} } \\
R\gmod \ar[r]^{Q_{w^{-1}}}     & \tRm[w^{-1}].  \\
}
$$
Since $\psi_*$ is involutive, the composition $\mathcal F_{w^{-1}} \circ \mathcal F_w$ is isomorphic to the identity functor on $ \tRm[w^{-1}]$ by Theorem \ref{Thm: graded localization} (iii).
It follows that  $\mathcal F_w$ is an equivalence of categories, as desired.
\end{proof}

\Cor Assume that  $I=I_w$.
Let $X$ be a simple module in $R\gmod$ satisfying $Q_w(X) \not\simeq 0$.
Then the triple 
$$\bl \Mm(w\mu,\mu),X,\psi_*\bl \Mm(w^{-1}\La,\La) \br \br$$
is a normal sequence for  any $\mu, \La  \in \wlP_+$. 
\encor
\begin{proof}
In the course of the proof the above theorem,  we showed that $(Q_{w}\circ \psi_*)(R_{\dC_\La^-}(\psi_*(X))) \neq 0$
for any simple module $X \in R\gmod$ such that $Q_w(X) \not\simeq 0$
and any $\La  \in \wlP_+$.
That is,
\eqn 
X \conv \psi_*(\dC^-_\La) \To[\psi_*(\coR_{\dC^-_\La})] \psi_*(\dC^-_\La)\conv X 
\eneqn
does not vanish under the functor $Q_w$. 
Hence  the homomorphism
\eqn 
\dC_\mu \conv X \conv \psi_*(\dC^-_\La) \To[\dC_\mu \conv \psi_*(\coR_{\dC^-_\La})] 
\dC_\mu \conv \psi_*(\dC^-_\La)\conv X \To[\coR_{\dC_\mu }(\psi_*(\dC^-_\La)\conv X)]
 \psi_*(\dC^-_\La)\conv X \conv \dC_\mu
\eneqn
is non-zero for any $\mu \in \wlP_+$.
Since $\dC_\mu, \psi_*(\dC^-_\La)$ and $X$ are simple modules, the above homomorphism is equal to the composition  of the r-matrices
$\rmat{X, \psi_*(\dC^-_\La)}$, $\rmat{\dC_\mu, \psi_*(\dC^-_\La)}$ and 
$\rmat{\dC_\mu, X}$ up to a constant multiple.
Thus 
$\bl \dC_\mu,X,\psi_*\bl \dC^-_\La  \br \br$ is a normal sequence.
\end{proof}

Recall that the category $\tCw[{w^{-1}}]$ is left rigid, i.e.,  every object of $\tCw[{w^{-1}}]$ has a left dual in $\tCw[{w^{-1}}]$ (\cite[Corollary 5.11]{KKOP21}).
It follows that $(\tCw[{w}])^\rev$ is left rigid by the above theorem. 
Hence we obtain the following theorem as its corollary.
\Th\label{th:rigid}
The category $\tCw[w]$ is a rigid monoidal category.
\enth

\section{Localization of the category $\catC_{w,v}$ }
Through this section, we assume that $w,v \in \weyl$ satisfies $v \ble w$  and $I_w =I$.

Let $N$ be a (not necessarily simple) module in $\catC_{*,v}$ and $\la \in \wlP_+$.
Set $\al\seteq v\la-w\la$,  $\beta\seteq\la-v\la$ and $\gamma\seteq-\wt(N)$.
Note that the R-matrix $\rmat{\Mm(w\la,v\la), \Mm(v\la, \la)}$ decomposes into
$$\Mm(w\la,v\la) \conv \Mm(v\la, \la)\To[\pi]\Mm(w\la, \la) \To[\iota]q^{(\alpha,\beta)} \Mm(v\la,\la) \conv \Mm(w\la, v\la)$$
by \cite[Proposition 4.6]{KKOP18}.

Let $\rho_{w,v,\la}(N)$ be the composition of the following chain of homomorphisms:
\eqn
&&\Mm(w\la,v\la) \conv N \conv \Mm(v\la, \la)
\To[\Mm(w\la,v\la)  \circ \rmat{N,\Mm(v\la, \la)}]
q^{(\beta,\gamma)} \Mm(w\la,v\la) \conv \Mm(v\la, \la) \conv N  \\
&&\hs{20ex}\To[\pi \circ N] 
q^{(\beta,\gamma)} \Mm(w\la, \la) \conv N 
\To[\coR_{\Mm(w\la,\la)}(N)]
q^{-(w\la+v\la,\gamma)} N \conv \Mm(w\la,\la).
\eneqn
Here, $\coR_{\Mm(w\la,\la)}$ is the non-degenerate braider associated with $\Mm(w\la,\la)$,
and $ \rmat{N,\,\Mm(v\la, \la)}$ is well-defined because $\bl N,\Mm(v\la, \la))\br$
is unmixed (Proposition~\ref{prop:unmixedr}). 
Note that 
we have
$$\rmat{N,\Mm(v\la, \la)}(u\tens v)=\tau_{w[\Ht(\beta),\Ht(\gamma)]}(v\tens u)\qt{for any $u\in N$ and $v\in \Mm(v\la, \la)$}$$ and hence $\La(N,\Mm(v\la, \la))=-(\beta,\gamma)$. 
Since $\phi_{\Mm(w\la,\la)}(\gamma)=-(w\la+\la,\gamma)$, we have
$$-\deg(\rho_{w,v,\la}(N))=(\beta,\gamma)-(w\la+\la,\gamma) =-(w\la+v\la,\gamma). $$

\begin{lem} \label{lem:eNM}
For any $R(\gamma)$-module $N\in \catC_{*,v}$,
 We have 
$$e(\al+\gamma,\beta) \left(  \Mm(w\la,\la) \conv N  \right) \simeq q^{-(\beta,\gamma)}\left(\Mm(w\la,v\la) \conv N\right) \tens \Mm(v\la, \la)$$
and
$$\hs{-5ex}e(\al+\gamma,\beta) \left( N \conv \Mm(w\la,\la) \right) \simeq \left(N \conv \Mm(w\la,v\la) \right) \tens \Mm(v\la, \la).$$
\end{lem}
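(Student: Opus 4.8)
The plan is to deduce both isomorphisms from Proposition~\ref{prop:varunmixed}, applied to the triple of modules $\Mm(w\la,v\la)$, $\Mm(v\la,\la)$, $N$. I would first collect the inputs: $\Mm(w\la,\la)$ is a quotient of $\Mm(w\la,v\la)\conv\Mm(v\la,\la)$ by Proposition~\ref{Prop: dM properties} (so there is a surjection $\pi\colon\Mm(w\la,v\la)\conv\Mm(v\la,\la)\epito\Mm(w\la,\la)$), and the determinantial modules satisfy $\Mm(v\la,\la)\in\catC_{v}$ and $\Mm(w\la,v\la)\in\catC_{*,v}$ (standard properties of determinantial modules, see \cite{KKOP18}). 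The crucial verification is the hypothesis of Proposition~\ref{prop:varunmixed},
$$\bl\sgW(\Mm(w\la,v\la))+\sgW(N)\br\cap\gW(\Mm(v\la,\la))=\st{0}.$$
This holds because $\sgW(\Mm(w\la,v\la))$ and $\sgW(N)$ both lie in the additively closed cone $\Sp(\prD\cap v\prD)$ — the first since $\Mm(w\la,v\la)\in\catC_{*,v}$, the second by hypothesis on $N$ — so their sum does too, while $\gW(\Mm(v\la,\la))\subset\Sp(\prD\cap v\nrD)$ because $\Mm(v\la,\la)\in\catC_{v}$, and $\Sp(\prD\cap v\prD)\cap\Sp(\prD\cap v\nrD)=\st{0}$ since $v^{-1}$ carries the first cone into $\rtl_+$ and the second into $-\rtl_+$.

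Granting this, \eqref{eq:22} supplies a natural isomorphism
$$e(\al+\gamma,\beta)\bl\Mm(w\la,v\la)\conv\Mm(v\la,\la)\conv N\br\ \simeq\ q^{-(\beta,\gamma)}\bl\Mm(w\la,v\la)\conv N\br\tens\Mm(v\la,\la).$$
Now I would convolve $\pi$ with $N$ on the right and apply the exact functor $e(\al+\gamma,\beta)$; composing with the isomorphism above yields a \emph{natural epimorphism}
$$\eta_N\colon\ q^{-(\beta,\gamma)}\bl\Mm(w\la,v\la)\conv N\br\tens\Mm(v\la,\la)\ \epito\ e(\al+\gamma,\beta)\bl\Mm(w\la,\la)\conv N\br.$$
When $N$ is simple, \eqref{eq:24} (applied to the same triple, now all three modules simple, using $\Mm(w\la,v\la)\hconv\Mm(v\la,\la)=\Mm(w\la,\la)$) shows that the target of $\eta_N$ is isomorphic to its source, so the epimorphism $\eta_N$ is between modules of equal finite length, hence an isomorphism. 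For an arbitrary $N\in\catC_{*,v}$ I would conclude by dévissage: $\catC_{*,v}$ is stable under subquotients, the source and target of $\eta$ are exact functors of $N$, and hence $\ker\eta$ is an exact subfunctor that vanishes on all simple modules and therefore on every module; thus $\eta_N$ is an isomorphism for every $N$. This is the first identity of the lemma.

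The second identity is obtained in the same way, convolving $\pi$ with $N$ on the \emph{left} and using \eqref{eq:20} and \eqref{eq:21} in place of \eqref{eq:22} and \eqref{eq:24}; alternatively it follows from the first by applying the duality $(\,\cdot\,)^{\star}$, or the functor $\psi_{*}$. The step I expect to be the main obstacle is the functoriality bookkeeping behind the dévissage — making sure that the isomorphisms produced by Proposition~\ref{prop:varunmixed} are genuinely natural in $N$, so that $\ker\eta$ really is an exact subfunctor — together with the routine but fiddly check that the various grading shifts coming out of \eqref{eq:20}--\eqref{eq:24} assemble precisely into the shift $q^{-(\beta,\gamma)}$ (respectively, into no shift) displayed in the statement. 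The only genuinely external inputs are the memberships $\Mm(v\la,\la)\in\catC_{v}$ and $\Mm(w\la,v\la)\in\catC_{*,v}$.
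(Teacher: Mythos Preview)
Your proposal is correct and rests on the same key input as the paper's proof: Proposition~\ref{prop:varunmixed}, applied with the triples $\bl\Mm(w\la,v\la),\,N,\,\Mm(v\la,\la)\br$ for the first identity and $\bl N,\,\Mm(w\la,v\la),\,\Mm(v\la,\la)\br$ for the second, after the same unmixedness verification you spell out. The only difference is that the paper invokes \eqref{eq:24} and \eqref{eq:21} directly for arbitrary $N$, skipping your d\'evissage step. This is legitimate because, although those two identities are stated in Proposition~\ref{prop:varunmixed} under the hypothesis that $L$, $M$, $N$ are all simple, an inspection of the diagram proving them shows that only the simplicity of the two modules forming the $\hconv$ is used (here the two determinantial modules), while the module playing the role of $M$ may be arbitrary. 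Your reduction to simple $N$ followed by d\'evissage is therefore correct but unnecessary once one makes this observation. (Your throwaway ``alternatively via $(\,\cdot\,)^{\star}$ or $\psi_*$'' is not straightforward --- neither operation interacts cleanly with $\Res_{\al+\gamma,\beta}$ --- but since you also give the direct argument via \eqref{eq:20} and \eqref{eq:21}, this does not affect the proof.)
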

\Proof 
To obtain the first isomorphism, it is enough to apply 
\eqref{eq:24} in Proposition~\ref{prop:varunmixed}
by taking $\bl \Mm(w\la,v\la),N,\Mm(v\la,\la)\br$ as $(L,M,N)$,
and for the second we can  apply \eqref{eq:21} 
by taking $\bl N,\Mm(w\la,v\la),\Mm(v\la,\la)\br$ as $(L,M,N)$.
\QED

\begin{prop} \label{prop:psiwvla}
For any $R(\gamma)$-module $N\in\catC_{*,v}$, there exists a unique homomorphism 
$$\psi_{w,v,\la}(N)\cl \Mm(w\la,v\la) \conv N \to q^{-(w\la+v\la,\gamma)} N \conv \Mm(w\la,v\la) $$
 such that 
 the following diagrams are commutative.
\eq
&&\hs{2ex}\xymatrix@C=10ex{
\bl \Mm(w\la,v\la) \conv N\br\tens \Mm(v\la,\la)\ar[r]^-{\psi_{w,v,\la}(N)}
\ar[d]^\bwr& q^{-(w\la+v\la,\gamma)}
\bl N \conv \Mm(w\la,v\la)\br\tens \Mm(v\la,\la)\ar[d]^\bwr\\
q^{(\beta,\gamma)}e(\al+\gamma,\beta)\bl\Mm(w\la,\la) \conv N\br\ar[r]^-{\coR_{\Mm(w\la,\la)}(N)}&
 q^{-(w\la+v\la,\gamma)}e(\al+\gamma,\beta)\bl N\conv\Mm(w\la,\la)\br,
}\eneq
\eq
&& \xymatrix@R=7ex{
\Mm(w\la,v\la) \conv N\conv  \Mm(v\la, \la) \ar[dr]^{\rho_{w,v,\la}(N)}  
\ar[d]_{\psi_{w,v,\la}(N)\circ\Mm(v\la,\la)}\\
q^{-(w\la+v\la,\gamma)} N \conv \Mm(w\la,v\la)\conv\Mm(v\la,\la) 
\ar[r]&q^{-(w\la+v\la,\gamma)} N \conv \Mm(w\la,\la) .
 }\label{diag:Rwv}
\eneq
\end{prop}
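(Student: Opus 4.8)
The plan is to construct $\psi_{w,v,\la}(N)$ by descent from the morphism $\coR_{\Mm(w\la,\la)}(N)$ along the restriction functor $e(\al+\gamma,\beta)(-)$, using Lemma~\ref{lem:eNM} and the fact that $\End(\Mm(v\la,\la))\simeq\corp$. Concretely, by Lemma~\ref{lem:eNM} we identify
$e(\al+\gamma,\beta)(\Mm(w\la,\la)\conv N)\simeq q^{-(\beta,\gamma)}(\Mm(w\la,v\la)\conv N)\tens\Mm(v\la,\la)$ and
$e(\al+\gamma,\beta)(N\conv\Mm(w\la,\la))\simeq (N\conv\Mm(w\la,v\la))\tens\Mm(v\la,\la)$ as $R(\al+\gamma)\tens R(\beta)$-modules. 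Applying $e(\al+\gamma,\beta)$ to the braider map $\coR_{\Mm(w\la,\la)}(N)\cl \Mm(w\la,\la)\conv N\to q^{-(w\la+\la,\gamma)}N\conv\Mm(w\la,\la)$ therefore yields an $R(\al+\gamma)\tens R(\beta)$-linear map
$$q^{(\beta,\gamma)}(\Mm(w\la,v\la)\conv N)\tens\Mm(v\la,\la)\to q^{-(w\la+v\la,\gamma)}(N\conv\Mm(w\la,v\la))\tens\Mm(v\la,\la),$$
using $-(w\la+\la,\gamma)+(\beta,\gamma)=-(w\la+v\la,\gamma)$. Since $\Mm(v\la,\la)$ is a simple $R(\beta)$-module with $\End\simeq\corp$, a standard argument (e.g.\ via $\Hom_{R(\al+\gamma)\tens R(\beta)}(A\tens L, B\tens L)\simeq \Hom_{R(\al+\gamma)}(A,B)$ for $L$ simple absolutely irreducible) shows this map is of the form $g\tens\id_{\Mm(v\la,\la)}$ for a unique $R(\al+\gamma)$-linear $g$; we set $\psi_{w,v,\la}(N)\seteq g$. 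By construction the first diagram commutes, and uniqueness of $\psi_{w,v,\la}(N)$ with that property is immediate from the faithfulness of $e(\al+\gamma,\beta)(-)$ on the relevant modules together with the $\Mm(v\la,\la)$-factor cancellation.

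Next I would check that $\psi_{w,v,\la}(N)$ makes the second diagram \eqref{diag:Rwv} commute. The key point is to rewrite $\rho_{w,v,\la}(N)$, which by definition is the composite
$$\Mm(w\la,v\la)\conv N\conv\Mm(v\la,\la)\xrightarrow{\ \Mm(w\la,v\la)\circ\rmat{N,\Mm(v\la,\la)}\ }q^{(\beta,\gamma)}\Mm(w\la,v\la)\conv\Mm(v\la,\la)\conv N\xrightarrow{\ \pi\circ N\ }q^{(\beta,\gamma)}\Mm(w\la,\la)\conv N\xrightarrow{\ \coR_{\Mm(w\la,\la)}(N)\ }q^{-(w\la+v\la,\gamma)}N\conv\Mm(w\la,\la),$$
in terms of the restriction identifications. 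The map $\rmat{N,\Mm(v\la,\la)}$ is the unmixed $R$-matrix given by $\tau_{w[\Ht\beta,\Ht\gamma]}$, and $\pi$ is the surjection $\Mm(w\la,v\la)\conv\Mm(v\la,\la)\epito\Mm(w\la,\la)$ from \cite[Proposition 4.6]{KKOP18}; after applying $e(\al+\gamma,\beta)(-)$ and using that $(N,\Mm(v\la,\la))$ and $(\Mm(w\la,v\la),\Mm(v\la,\la))$ are unmixed pairs (Proposition~\ref{prop:unmixedr}, Proposition~\ref{prop:varunmixed}), the first two arrows of $\rho_{w,v,\la}(N)$ become, on the $e(\al+\gamma,\beta)$-part, the canonical isomorphism of Lemma~\ref{lem:eNM} identifying $(\Mm(w\la,v\la)\conv N)\tens\Mm(v\la,\la)$ with $q^{(\beta,\gamma)}e(\al+\gamma,\beta)(\Mm(w\la,\la)\conv N)$. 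Chasing through, $e(\al+\gamma,\beta)(\rho_{w,v,\la}(N))$ equals the composite defining $\psi_{w,v,\la}(N)\tens\id_{\Mm(v\la,\la)}$ postcomposed with the isomorphism $e(\al+\gamma,\beta)(N\conv\Mm(w\la,\la))\simeq(N\conv\Mm(w\la,v\la))\tens\Mm(v\la,\la)$; since the lower horizontal arrow of \eqref{diag:Rwv} is, by definition, the composite $N\conv\Mm(w\la,v\la)\conv\Mm(v\la,\la)\xrightarrow{N\circ\iota^{-1}?\ \text{(rather: } N\circ\pi\text{)}}N\conv\Mm(w\la,\la)$ — more precisely the map induced by $\Mm(w\la,v\la)\conv\Mm(v\la,\la)\epito\Mm(w\la,\la)$ — commutativity of \eqref{diag:Rwv} reduces to the equality of these two descended maps, which holds because $e(\al+\gamma,\beta)(-)$ is faithful on modules in $\catC_{*,v}\conv\{\Mm(v\la,\la)\}$ and the $\Mm(v\la,\la)$-factors match.

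The main obstacle I anticipate is bookkeeping: correctly tracking the grading shifts through all the identifications (the discrepancies $-(w\la+\la,\gamma)+(\beta,\gamma)=-(w\la+v\la,\gamma)$ and the $q^{(\beta,\gamma)}$ appearing in Lemma~\ref{lem:eNM}), and verifying that the canonical isomorphisms of Lemma~\ref{lem:eNM} are genuinely compatible with the maps $\rmat{N,\Mm(v\la,\la)}$, $\pi$, and $\coR_{\Mm(w\la,\la)}(N)$ — i.e.\ that the explicit $\tau$-formulas for the unmixed $R$-matrices (Proposition~\ref{prop:unmixedr}) are the restrictions of the abstract ones. Once one knows that $\bl N,\Mm(v\la,\la)\br$, $\bl\Mm(w\la,v\la),\Mm(v\la,\la)\br$, and $\bl\Mm(w\la,v\la),N\br$ behave as expected under $e(\al+\gamma,\beta)(-)$ (which is exactly the content of Proposition~\ref{prop:varunmixed}, applied with the three modules in the appropriate orders), the argument is a diagram chase. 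The absolute irreducibility needed to split off $\Mm(v\la,\la)$ follows from $\End_{R(\beta)}(\Mm(v\la,\la))\simeq\corp$, which holds since $\Mm(v\la,\la)$ is a self-dual simple module over a quiver Hecke algebra and $\corp$ is taken to be a field. Finally, the uniqueness clause in the proposition is automatic: any $\psi_{w,v,\la}(N)$ fitting into the first commutative square is determined after applying the faithful functor $e(\al+\gamma,\beta)(-)$ and cancelling $\Mm(v\la,\la)$, so there is at most one such morphism.
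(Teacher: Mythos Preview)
Your proposal is correct and follows essentially the same route as the paper: apply $e(\al+\gamma,\beta)$ to $\coR_{\Mm(w\la,\la)}(N)$, use Lemma~\ref{lem:eNM} to rewrite both sides as $(-)\tens\Mm(v\la,\la)$, and then invoke $\END_{R(\beta)}(\Mm(v\la,\la))\simeq\cor$ to peel off the $\Mm(v\la,\la)$ factor and obtain $\psi_{w,v,\la}(N)$. The paper dispatches the commutativity of diagram~\eqref{diag:Rwv} with a single ``then obvious'', whereas you spell out the diagram chase through the unmixed $R$-matrices; your sketch is correct and does not require $\iota$ (only $\pi$), as you yourself noted.
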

\begin{proof}
Applying $e(\al+\gamma,\beta)$ to 
$$\coR_{\Mm(w\la,\la)}(N)\cl \Mm(w\la,\la)\conv N\to q^{-(w\la+\la,\gamma)}N\conv \Mm(w\la,\la),$$
we obtain by Lemma~\ref{lem:eNM}
$$q^{-(\beta,\gamma)}\bl\Mm(w\la,v\la)\conv N\br\tens \Mm(v\la,\la)\to 
q^{-(w\la+\la,\gamma)}\bl N\conv \Mm(w\la,v\la)\br\tens \Mm(v\la,\la).$$
Since we have
$\END(\Mm(v\la,\la))\simeq\cor\,{\id}$, we obtain
$$\psi_{w,v,\la}(N)\cl \Mm(w\la,v\la) \conv N \to q^{-(w\la+v\la,\gamma)} N \conv \Mm(w\la,v\la). $$
The commutativity of \eqref{diag:Rwv} is then obvious.
\end{proof}

\begin{df} For $N \in \catC_{*,v}$ and $\la \in \wlP_+$
we define 
\eqn
\coR_{\Mm(w\la,v\la)}(N) \seteq \psi_{w,v,\la}(N) \cl \Mm(w\la,v\la)\conv N \to q^{\phi_{w,v,\la}(-\wt(N))} N \conv \Mm(w\la,v\la),
\eneqn
where 
$$\phi_{w,v,\la}(\gamma) = -(w\la+v\la,\gamma) \quad \text{for} \ \gamma \in \rtl.$$
\end{df}

\begin{thm}
The family $\{(\Mm(w\La_i,v\La_i), \coR_{\Mm(w\La_i,v\La_i)}, \phi_{w,v, \La_i} )\}_{i\in I}$ 
is a real commuting family of graded braiders in the category $\catC_{*,v}$,
and also it is a family of central objects in $\catC_{w,v}$.
\end{thm}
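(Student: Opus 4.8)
The plan is to reduce every assertion about the family $\{(\Mm(w\La_i,v\La_i),\coR_{\Mm(w\La_i,v\La_i)},\phi_{w,v,\La_i})\}_{i\in I}$ in $\catC_{*,v}$ to the corresponding, already established, assertion about the family $\{(\dC_i,\coR_{\dC_i},\dphi_i)\}_{i\in I}=\{(\Mm(w\La_i,\La_i),\coR_{\Mm(w\La_i,\La_i)},\dphi_i)\}_{i\in I}$ in $R\gmod$ (Proposition~\ref{Prop: canonical braiders}), by restricting along $\Res_{\al_i+\gamma,\beta_i}$ where $\al_i=v\La_i-w\La_i$, $\beta_i=\La_i-v\La_i$. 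First I would verify that $(\Mm(w\La_i,v\La_i),\coR_{\Mm(w\La_i,v\La_i)},\phi_{w,v,\La_i})$ is actually a graded braider of $\catC_{*,v}$: that $\coR_{\Mm(w\La_i,v\La_i)}(N)$ is functorial in $N\in\catC_{*,v}$ follows from the uniqueness in Proposition~\ref{prop:psiwvla} and the functoriality of $\coR_{\Mm(w\La_i,\La_i)}$; the two hexagon/compatibility diagrams in Definition~\ref{def:graded braider} are obtained by applying $\Res_{?,\beta_i}$ to the corresponding diagrams for $\coR_{\Mm(w\La_i,\La_i)}$ (using Lemma~\ref{lem:eNM} repeatedly and $\END(\Mm(v\La_i,\La_i))\simeq\cor$ to cancel the common tensor factor $\Mm(v\La_i,\La_i)$), together with the factorization $\rmat{\Mm(w\La_i,v\La_i),\Mm(v\La_i,\La_i)}=\iota\circ\pi$ recalled before Lemma~\ref{lem:eNM}. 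Here one must check that $\catC_{*,v}$ is closed under the convolutions that appear, which is recorded in Section~\ref{Sec: catC}, and that the relevant pairs are unmixed so that Proposition~\ref{prop:varunmixed} applies — the pair $(\Mm(w\La_i,v\La_i),N)$ being unmixed uses that $N\in\catC_{*,v}$, i.e.\ $\gW^*(N)\subset\Sp(\prD\cap v\prD)$, while $\gW(\Mm(w\La_i,v\La_i))\subset v\nrD$, and analogously for $(N,\Mm(v\La_i,\La_i))$.

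Next I would establish the three numbered conditions (a)--(c) in the definition of a real commuting family. Condition~(a), $\Mm(w\La_i,v\La_i)\in(\catC_{*,v})_{w\La_i-v\La_i}$ and the symmetry $\phi_{w,v,\La_i}(\wt\Mm(w\La_j,v\La_j))+\phi_{w,v,\La_j}(\wt\Mm(w\La_i,v\La_i))=0$, is a direct weight computation: with $\wt\Mm(w\La_j,v\La_j)=w\La_j-v\La_j$ and $\phi_{w,v,\La_i}(\gamma)=-(w\La_i+v\La_i,\gamma)$ one gets $-(w\La_i+v\La_i,w\La_j-v\La_j)-(w\La_j+v\La_j,w\La_i-v\La_i)=-2(w\La_i,w\La_j)+2(v\La_i,v\La_j)=0$ since $W$ preserves the form; the vanishing $\phi_{w,v,\La_i}(\wt\Mm(w\La_i,v\La_i))=0$ is the $j=i$ case. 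For conditions~(b) and~(c) I would evaluate $\coR_{\Mm(w\La_i,v\La_i)}$ on the central objects themselves by applying $\Res$ to the diagram~\eqref{diag:Rwv} and using Proposition~\ref{Prop: dM properties}(iv) to identify convolutions of determinantial modules; concretely, $\coR_{\Mm(w\La_i,v\La_i)}(\Mm(w\La_i,v\La_i))$ and $\coR_{\Mm(w\La_j,v\La_j)}(\Mm(w\La_i,v\La_i))\circ\coR_{\Mm(w\La_i,v\La_i)}(\Mm(w\La_j,v\La_j))$ restrict (after tensoring with the appropriate determinantial modules) to scalar multiples of identities because the corresponding statements hold for the $\dC_i$ by Proposition~\ref{Prop: canonical braiders}, and the scalar is detected on the restricted module since $\End$ of the relevant simple module over the parabolic subalgebra is $\cor$. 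Finally, that the family consists of central objects of $\catC_{w,v}$: for $N\in\catC_{w,v}\subset\catC_w$, Theorem~\ref{Thm: R Ci iso} says $\coR_{\Mm(w\La_i,\La_i)}(N)$ is an isomorphism in $R\gmod$, hence so is its restriction, hence $\coR_{\Mm(w\La_i,v\La_i)}(N)$ is an isomorphism by the commutativity of the defining diagram in Proposition~\ref{prop:psiwvla}.

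The main obstacle I anticipate is the bookkeeping in deriving the hexagon axioms for $\coR_{\Mm(w\La_i,v\La_i)}$ from those for $\coR_{\Mm(w\La_i,\La_i)}$: the restriction functor $\Res_{\al_i+\gamma,\beta_i}$ must be applied to tensor products of \emph{three} modules, and one needs the iterated-unmixedness statement~\eqref{eq:20}--\eqref{eq:24} of Proposition~\ref{prop:varunmixed} in the form $(\sgW(\Mm(w\La_i,v\La_i))+\sgW(N_1)+\sgW(N_2))\cap\gW(\Mm(v\La_i,\La_i))=\{0\}$, so that the restriction of $\Mm(w\La_i,v\La_i)\conv N_1\conv N_2\conv\Mm(v\La_i,\La_i)$ really does split off the factor $\Mm(v\La_i,\La_i)$ cleanly and compatibly with the braider maps. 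Once this "peeling off $\Mm(v\La_i,\La_i)$" is set up as a monoidal-natural isomorphism between $\Res_{?,\beta_i}(-\conv\Mm(v\La_i,\La_i))$ on $\catC_{*,v}$ and the identity, all the diagrams for the new braider become images under this isomorphism of the diagrams for the old one, and the proof goes through; I would present this peeling-off as the key lemma and then treat (a)--(c) and centrality as corollaries, deferring the purely diagrammatic verifications as routine.
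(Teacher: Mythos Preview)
Your proposal is correct and takes essentially the same approach as the paper: both reduce the assertions for $\Mm(w\La_i,v\La_i)$ in $\catC_{*,v}$ to the known properties of $\dC_i=\Mm(w\La_i,\La_i)$ in $R\gmod$ (Proposition~\ref{Prop: canonical braiders} and Theorem~\ref{Thm: R Ci iso}) via the restriction construction of Proposition~\ref{prop:psiwvla}. The paper's proof is in fact a single sentence invoking exactly these references; your proposal simply unpacks the details that the paper leaves implicit.
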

\begin{proof}
We know that
$\{(\Mm(w\La_i,\La_i), \coR_{\Mm(w\La_i,\La_i)}, \phi_{w, \id, \La_i } )\}_{i\in I}$ 
is a real commuting family of graded braiders in the category $R\gmod$ (Proposition \ref{Prop: canonical braiders}),
and also it is a family of central objects in $\catC_{w}$
(Theorem \ref{Thm: R Ci iso}). 
Hence our assertion follows from Proposition~\ref{prop:psiwvla}.
\end{proof}

Set
$$\tcatC_{*,v}{[w]}\seteq\catC_{*,v}[\Mm(w\La_i,v\La_i)^{\conv -1}  \mid i \in I], $$
and 
$$\tcatC_{w,v}\seteq\catC_{w,v}[\Mm(w\La_i,v\La_i)^{\conv -1}  \mid i \in I]. $$
Since $\catC_{w,v}$ is a full subcategory of $\catC_{*,v}$, the canonical embedding induces  a fully faithful monoidal functor 
\eqn 
\iota_{w,v}\cl  \tcatC_{w,v} \rightarrowtail  \tcatC_{*,v}[w].
\eneqn

\begin{thm} \label{thm:equiv}
The functor $ \iota_{w,v}\cl  \tcatC_{w,v} \rightarrowtail  \tcatC_{*,v}[w]$ is an equivalence of categories.
\end{thm}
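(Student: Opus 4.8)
The functor $\iota_{w,v}$ is fully faithful, as noted above, so the content is essential surjectivity, and the plan is to run the argument of \cite[Theorem~5.8, Theorem~5.9]{KKOP21} with $R\gmod$, $\catC_w$, $\{\Mm(w\La_i,\La_i)\}_{i\in I}$ and $Q_w$ replaced respectively by $\catC_{*,v}$, $\catC_{w,v}$, $\{\Mm(w\La_i,v\La_i)\}_{i\in I}$ and the localization functor $\Phi\cl\catC_{*,v}\to\tcatC_{*,v}[w]$. First I would record, as in \cite{KKOP21}, that $\tcatC_{*,v}[w]$ is abelian with exact convolution, that every object has finite length, and that every simple object is isomorphic to $\widetilde{C}_\La\conv\Phi(S)$ for a simple $S\in\catC_{*,v}$ and $\La\in\wlP$, where the invertible objects $\widetilde{C}_\La$ ($\La\in\wlP$) are obtained by inverting the $\Phi(\Mm(w\La_i,v\La_i))$. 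Since $\catC_{w,v}$ is closed under taking subquotients inside $\catC_{*,v}$, the essential image of $\iota_{w,v}$ is closed under subquotients in $\tcatC_{*,v}[w]$; as every object has finite length, it therefore suffices to show that every simple object of $\tcatC_{*,v}[w]$ lies in this essential image.

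For $\La\in\wlP_+$ we have $\widetilde{C}_\La\simeq\Phi(\Mm(w\La,v\La))$, and $\Mm(w\La,v\La)\in\catC_{w,v}$: writing $\La=\sum_i m_i\La_i$, it is a convolution of the $\Mm(w\La_i,v\La_i)$ up to a grading shift by Proposition~\ref{Prop: dM properties}(iv), and $\catC_{w,v}$ is closed under convolution. Hence the $\widetilde{C}_\La$ with $\La\in\wlP_+$ — and their inverses — already lie in the image, and it remains to treat simple objects $\Phi(S)$ with $\Phi(S)\not\simeq 0$. For these I would prove the analogue of Proposition~\ref{prop:Locsim}: $\Phi(S)$ is simple, and for every $\La\in\wlP_+$ one has $\Phi(S)\simeq q^{H(\La,\La)}\,\widetilde{C}_{-\La}\conv\Phi\bl\Mm(w\La,v\La)\hconv S\br$. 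The proof is the one given there: apply the exact functor $\Phi$ to the R-matrix factorization $\Mm(w\La,v\La)\conv S\epito\Mm(w\La,v\La)\hconv S\monoto S\conv\Mm(w\La,v\La)$ (valid up to a grading shift since $\Mm(w\La,v\La)$ is \afr by Proposition~\ref{Prop: dM properties}(i)); its composite is $\coR_{\Mm(w\La,v\La)}(S)$ up to a scalar (by Proposition~\ref{prop:ru}) and becomes an isomorphism after $\Phi$ because $\Mm(w\La,v\La)$ is invertible in $\tcatC_{*,v}[w]$. As $\widetilde{C}_{-\La}$ is invertible and $\Mm(w\La,v\La)\hconv S$ automatically lies in $\catC_{*,v}$ (a subquotient of $\Mm(w\La,v\La)\conv S$ with both factors in $\catC_{*,v}$), everything is reduced to showing that $\Mm(w\La,v\La)\hconv S\in\catC_w$ for $\La\in\wlP_+$ sufficiently dominant, this being the only condition beyond $\catC_{*,v}$ that $\catC_{w,v}$ imposes.

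To establish this, I would first note that $Q_w(S)\not\simeq 0$: by Proposition~\ref{prop:kernel}, $Q_w(S)\simeq 0$ is equivalent to $\coR_{\Mm(w\La,\La)}(S)=0$ for some $\La\in\wlP_+$, while the commuting square of Proposition~\ref{prop:psiwvla} exhibits $\coR_{\Mm(w\La,v\La)}(S)\etens\id_{\Mm(v\La,\La)}$ as the restriction $e(\al+\gamma,\beta)\coR_{\Mm(w\La,\La)}(S)$ (up to isomorphism), so $\coR_{\Mm(w\La,\La)}(S)=0$ would force $\coR_{\Mm(w\La,v\La)}(S)=0$ and hence $\Phi(S)\simeq 0$. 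Next, by the $\catC_w$-version already available in \cite{KKOP21} (the lemma behind \cite[Theorem~5.8]{KKOP21}; it also follows from Proposition~\ref{prop:Locsim} together with the equivalence $\iota_w$ of Theorem~\ref{thm: Cwequiv}), $\Mm(w\La',\La')\hconv S\in\catC_w$ for $\La'\in\wlP_+$ sufficiently dominant. Finally I would transfer this to $\Mm(w\La,v\La)\hconv S$ via the factorization $\Mm(w\La,\La)\simeq\Mm(w\La,v\La)\hconv\Mm(v\La,\La)$ (Proposition~\ref{Prop: dM properties}(v)) together with the fact that $(S,\Mm(v\La,\La))$, $(\Mm(w\La,v\La),\Mm(v\La,\La))$ and $(\Mm(w\La,v\La)\hconv S,\Mm(v\La,\La))$ are unmixed pairs: $\gW^*$ of the first entry meets $\prD$ only inside $v\prD$ (these objects lie in $\catC_{*,v}$), while $\gW(\Mm(v\La,\La))$ meets $\prD$ only inside $v\nrD$, so by Proposition~\ref{prop:W} the two subsets meet only in $\{0\}$. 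Using $\tLa=0$ for unmixed pairs (Corollary~\ref{cor:unmixedtLa}) and the normal-sequence criterion (Lemma~\ref{lem:normal}, Corollary~\ref{cor:Normal}), one identifies the relevant heads and reads off $\gW(\Mm(w\La,v\La)\hconv S)\cap\prD\subseteq w\nrD$ from $\gW(\Mm(w\La,\La)\hconv S)\cap\prD\subseteq w\nrD$, that is, $\Mm(w\La,v\La)\hconv S\in\catC_w$.

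The main obstacle is this last transfer step. In the $\catC_w$ case the absorbing object is the central object $\Mm(w\La,\La)$ itself, whereas here $\Mm(w\La,v\La)$ is, so to speak, smaller on the $\catC_w$-side, and one cannot appeal directly to its absorption property; bridging the gap requires the unmixed-pair structure relative to $\Mm(v\La,\La)$ and the factorization $\Mm(w\La,\La)\simeq\Mm(w\La,v\La)\hconv\Mm(v\La,\La)$, and the delicate point is to check that the sequences involved are normal, so that the head computations linking $\Mm(w\La,v\La)\hconv S$, $\Mm(v\La,\La)$ and $\Mm(w\La,\La)\hconv S$ are legitimate. Once this crux is in hand, essential surjectivity — and hence the theorem — follows formally, exactly as in \cite[Theorem~5.8, Theorem~5.9]{KKOP21}.
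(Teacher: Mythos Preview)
Your overall strategy coincides with the paper's: reduce to a simple $S\in\catC_{*,v}$ with $\Phi(S)\not\simeq0$, show $Q_w(S)\not\simeq0$, pick $\La\in\wlP_+$ with $Y\seteq\Mm(w\La,\La)\hconv S\in\catC_w$, and then prove $Z\seteq\Mm(w\La,v\La)\hconv S\in\catC_{w,v}$. The difference lies in the transfer step, which you flag as the crux.

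The paper handles it in one line using the very restriction formula you already invoke for $Q_w(S)\not\simeq0$. From Proposition~\ref{prop:psiwvla} (combined with Lemma~\ref{lem:eNM}) you have, after identifying via the vertical isomorphisms,
\[
e(\al+\gamma,\beta)\,\coR_{\Mm(w\La,\La)}(S)\;=\;\coR_{\Mm(w\La,v\La)}(S)\tens\id_{\Mm(v\La,\La)}.
\]
Since both braiders are nonzero (so equal to nonzero multiples of the R-matrices), taking images on each side and using exactness of $\Res_{\al+\gamma,\beta}$ gives
\[
\Res_{\al+\gamma,\beta}(Y)\;\simeq\; Z\tens\Mm(v\La,\La),
\]
whence $\gW(Z)\subset\gW(Y)\subset\rtl_+\cap w\rtl_-$ directly. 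That is the whole argument; no head identifications or normal-sequence bookkeeping are needed.

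By contrast, your proposed route through Lemma~\ref{lem:normal} and Corollary~\ref{cor:Normal} aims at an equality of heads like $(\Mm(w\La,v\La)\hconv S)\hconv\Mm(v\La,\La)\simeq\Mm(w\La,\La)\hconv S$. The available $\tLa$-vanishings make $(\Mm(w\La,v\La),S,\Mm(v\La,\La))$ and $(S,\Mm(w\La,v\La),\Mm(v\La,\La))$ normal, but not $(\Mm(w\La,v\La),\Mm(v\La,\La),S)$, which is what that particular head identification would require; so this detour is genuinely delicate, as you suspected. Simply reuse the restriction identity and take images instead.
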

\begin{proof}
Let us denote by $Q_{w,v}\cl \catC_{*,v} \to \tcatC_{*,v}[w] $
the localization functor. 
It is enough to show that for every object $X\in \tcatC_{*,v}[w] $, there exists an object $Z \in \tcatC_{w,v}$ such that   $\iota_{w,v}(Z) \simeq X$. Since
$\tcatC_{w,v}$ is closed under taking extension by 
\cite[Proposition 2.10]{KKOP21},
we may assume further that $X$ is a simple object. Since every simple object in $\tcatC_{*,v} [w] $ is of the form $Q_{w,v}(Y)\conv \Mm(w\la,v\la)^{\circ -1}$ for some $\la\in \wtl_+$ and a simple object $Y\in\catC_{*,v}$ (\cite[Proposition 4.8]{KKOP21}), 
we may assume that $X$ is a simple module in $\catC_{*,v}$.  

Let $X$ be a simple module in $\catC_{*,v}$.
We shall show that $Q_{w,v}(X)\in\tcatC_{w,v}$. 

Recall that $Q_w\cl R\gmod\to\tRm[w]\simeq\tCw$ is the localization functor.

\snoi
(i) Assume first that $Q_w(X) \not \simeq 0$. 
Then, there exists $\La \in \wtl_+$  and a simple module $Y\in \catC_w$ such that 
$$Q_w(X) \simeq \dC_\La^{\circ -1} \conv Y,$$
where $\dC_\La=\Mm(w\La,\La)$.
Hence we have an epimorphism in $R\gmod$ (by replacing $\La$ if necessary)
$$\dC_\La \conv X \epito Y.$$
Note that by Lemma \ref{lem:eNM} we have
\eq
&& \ba{l}\Res_{*,\La-v\La} (\dC_\La \conv X) \simeq \bl\Mm(w\La,v\La)\ \conv X\br \tens\Mm(v\La,\La), \quad \text{and} \\
 \Res_{*,\La-v\La} (X\conv \dC_\La) \simeq  X \conv \Mm(w\La,v\La)\tens \Mm(v\La,\La).\ea\label{eq:res}
\eneq
Set $\beta=\La-v\La=-\wt\bl(\Mm(v\La,\La)\br$.
Applying $\Res_{*,\beta} $ to the diagram
\eqn
\xymatrix{
\dC_\La \conv X \ar[rr]^{\coR_{\dC_\La}(X)} \ar@{->>}[dr]&& X \conv \dC_\La \\
&Y\ar@{ >->}[ur]&
}
\eneqn
we get a commutative diagram
\eqn
\xymatrix{
\bl\Mm(w\La,v\La)\ \conv X \br\tens\Mm(v\La,\La) \ar[rr]^{\Res_{*,\beta} (\coR_{\dC_\La}(X)) } \ar@{->>}[dr]&& \bl X \conv \Mm(w\La,v\La)\br\tens \Mm(v\La,\La)\\
& \Res_{*,\beta} (Y) \ar@{>->}[ur]&
}.
\eneqn

Let $\psi \cl \Mm(w\La,v\La)\conv X \to X \conv \Mm(w\La,v\La)$ be the homomorphism such that $$\psi \tens \Mm(v\La,\La) = \Res_{*,\beta} (\coR_{\dC_\La}(X)). $$
Set $Z\seteq \Im(\psi)$. 
Since  $\psi=c\rmat{\Mm(w\La,v\La),X}$  for some $c\in \corp^\times$,
 $Z$ is simple.
Since $X$ and $\Mm(w\La,v\La)$ belong to $\catC_{*,v}$,
so does $Z$.
Note that  $Z $ belongs to $\catC_w$, because
$$\gW(Z) \subset \gW(Y) \in \rtl_+\cap w \rtl_-.$$
Since $Z$ is the image of
$\Mm(w\La,v\La)\conv X \To[{\RR_{\Mm(w\La,v\La)}}] X\conv\Mm(w\La,v\La)$, 
we have
$$Q_{*,v}(X) \simeq \Mm(w\La,v\La)^{\circ -1} \conv Q_{*,v}(Z) \simeq \iota_{w,v} (\Mm(w\La,v\La)^{\circ -1} \conv Q_{w,v}(Z)).$$

\mnoi
(ii)\ Assume that $X\in \catC_{*,v}$ satisfies $Q_w(X) \simeq 0$.  
Then there exists $\La\in\pwtl$ such that
$\coR_{\dC_\La}(X)\cl \dC_\La\conv X\to X\conv\dC_\La$
vanishes.
Applying $\Res_{*,\beta}$ ($\beta=\La-v\La$), we deduce from \eqref{eq:res}
that
$${\coR_{\Mm(w\La,v\La)}(X)}\tens \Mm(v\La,\La)
\cl \bl\Mm(w\La,v\La)\conv X\br\tens \Mm(v\La,\La) \To\bl X\conv\Mm(w\La,v\La)\br\tens \Mm(v\La,\La)$$
vanishes.
Hence $\coR_{\Mm(w\La,v\La)}(X)$ vanishes, which means that
$Q_{w,v}(X)\simeq0$.
\end{proof}


\begin{thebibliography}{99}




















\bibitem{GLS13}  C. Gei\ss, B. Leclerc and J. Schr\"oer, 
{\em Cluster structures on quantum coordinate rings}, Selecta Math. (N.S.) {\bf 19}  (2013),  no. 2, 337--397.


\bibitem{GY14} K.\ R.\ Goodearl and M.\ T.\ Yakimov,  {\em Quantum cluster algebras and quantum nilpotent algebras}, Proc. Natl. Acad. Sci. USA {\bf111} (2014), no. 27, 9696--9703.


\bibitem{GY17} \bysame, {\emph Quantum cluster algebra structures on quantum nilpotent algebras, Mem. Amer. Math. Soc}. {\bf247} (2017), no. 1169, vii+119. 

%


\bibitem{KK11}
S.-J. Kang and M. Kashiwara, \emph{Categorification of Highest Weight Modules via Khovanov-Lauda-Rouquier Algebras},
 Invent. Math. \textbf{190} (2012), no. 3, 699--742.



\bibitem{KKK18}
S.-J. Kang, M. Kashiwara and M. Kim, {\em Symmetric quiver
Hecke algebras and R-matrices of quantum affine algebras},
Invent. Math. \textbf{211} (2018), no. 2, 591--685.





\bibitem{KKKO15}
S.-J. Kang, M. Kashiwara,  M. Kim  and   S.-j. Oh,
\newblock{\em Simplicity of heads and socles of tensor products},
Compos. Math. \textbf{151} (2015), no. 2, 377--396.




\bibitem{KKKO18}
\bysame,
\newblock{\em Monoidal categorification of cluster algebras},
J. Amer. Math. Soc. \textbf{31} (2018), no. 2, 349--426.


\bibitem{Kas93}
M. Kashiwara, 
\newblock{\em The crystal base and Littelmann's refined Demazure Character formula},
Duke Math., \textbf{71} (1993), no. 3, 839--858.

\bibitem{Kas12} 
\bysame, \newblock{Notes on parameters of quiver Hecke algebras},  Proceedings of the Japan Academy, Series A, Mathematical Sciences \textbf{88}, (2012) no. 7,  97--102.

\bibitem{KK19}
M. Kashiwara, and  M. Kim, \emph{Laurent phenomenon and simple modules of quiver Hecke algebras}, Compos. Math. {\bf 155}, (2019), no. 12, 2263--2295. 

\bibitem{KKOP18}
M.~Kashiwara, M. Kim, S.-j. Oh, and  E.~Park,
\newblock{\em Monoidal categories associated with strata of flag manifolds},
Adv. Math. \textbf{328} (2018), 959-1009.




\bibitem{KKOP20} \bysame, \newblock {\em Monoidal categorification and quantum affine algebras},  Compos. Math. {\bf156}, (2020), no. 5, 1039--1077. 


\bibitem{KKOP21}
\bysame, \newblock{\em Localizations for quiver Hecke algebras}, Pure Appl. Math. Q. \textbf{17} (2021), no. 4, 1465--1548.

\bibitem{KP18}
M.~Kashiwara and E.~Park, \newblock{\em Affinizations and $R$-matrices for quiver Hecke algebras},
J. Eur. Math. Soc. \textbf{20}, 1161--1193.







\bibitem{KL09}
M.~Khovanov and A. Lauda, \emph{A diagrammatic approach to
categorification of quantum groups
  {I}}, Represent. Theory \textbf{13} (2009), 309--347.


\bibitem{KM17}
A. ~Kleshchev and R. ~ Muth, \emph{Stratifying KLR algebras of affine ADE types.}, J. Algebra \textbf{475} (2017),  133--170.

\bibitem{Kimura12}
Y.~Kimura, \emph{Quantum unipotent subgroup and dual canonical basis},
Kyoto J. Math. \textbf{52} (2012), no.~2, 277--331.

\bibitem{KO21} Y. Kimura and H. Oya, \emph{Twist automorphisms on quantum unipotent cells and dual canonical bases},
 International Mathematics Research Notices 2021, no. 9, 6772--6847.



\bibitem{LV11}
A.~Lauda and M.~Vazirani, \emph{Crystals from categorified quantum groups},
  Adv. Math. \textbf{228} (2011), no.~2, 803--861.

\bibitem{Lec16} 
B. Leclerc, \emph{Cluster structures on strata of flag varieties}.  Adv. Math. {\bf300} (2016)  190--228.


\bibitem{McNamara15}  P.  J.~ McNamara,  \emph{Finite dimensional representations of Khovanov-Lauda-Rouquier algebras I: Finite type}
J. reine angew. Math. {\bf707} (2015), 103--124


\bibitem{R08}
R.~Rouquier, \emph{2-Kac-Moody algebras},   arXiv:0812.5023\/v1.

\bibitem{R11}
\bysame, {\em Quiver Hecke algebras and 2-Lie algebras},
Algebra Colloq. {\bf 19} (2012), no. 2, 359--410.



\bibitem{TW16} 
P.~Tingley and B.~Webster,
{\em Mirkovi\'c-Vilonen polytopes and Khovanov-Lauda-Roquuier algebras}, Compos. Math. \textbf{152} (2016), no. 8, 1648--1696.





\bibitem{VV09}
M. Varagnolo and E. Vasserot,
 \emph{Canonical bases and KLR algebras},
J. Reine Angew. Math. \textbf{659} (2011), 67--100.

\end{thebibliography}
\end{document}